\newtheorem{proposition}{Proposition}[section]
\newtheorem{theorem}[proposition]{Theorem}
\newtheorem{lemma}[proposition]{Lemma}
\newtheorem{algorithm}[proposition]{Algorithm}
\newenvironment{proof}{{\noindent \bf Proof:}}{\hfill $\fbox{}$ \vspace*{5mm}}
\renewcommand{\v}[1]{\ensuremath{\mathbf{#1}}}
\newcommand{\bn}{{\bf n}}
\newcommand{\bz}{{\bf z}}
\newcommand{\ch}{{\cal H}}
\newcommand{\cu}{{\cal U}}
\newcommand{\cz}{{\cal Z}}
\newcommand{\R}{\mathbb{R}}
\newcommand{\Rnt}{\mathbb{R}^{n^2}}
\newcommand{\Rntnt}{\mathbb{R}^{n^2\times n^2}}
\newcommand{\Rtnt}{\mathbb{R}^{2n^2}}
\newcommand{\BE}{\begin{equation}}
\newcommand{\EE}{\end{equation}}
\begin{document}
\title{A Fast Alternating Minimization Algorithm for Total Variation Deblurring Without Boundary Artifacts}

\author{Zheng-Jian Bai\thanks{School of Mathematical Sciences, Xiamen
University, Xiamen 361005, People's Republic of China,
Dipartimento di Scienza e alta Tecnologia, Universit{\`{a}}
dell'Insubria - Sede di Como, Via Valleggio 11, 22100 Como, Italy,
\textbf{E-mail:} zjbai@xmu.edu.cn. The research of this author was
partially supported by the National Natural Science Foundation of
China grant 11271308,  the Natural Science Foundation of Fujian
Province of China for Distinguished Young Scholars (No.
2010J06002), NCET, and Internationalization Grant of U. Insubria
2008.} \and Daniele Cassani
\thanks{Dipartimento di  Scienza e alta Tecnologia,
Universit{\`{a}} dell'Insubria - Sede di Como, Via Valleggio 11,
22100 Como, Italy, \textbf{E-mail:}
$\{$daniele.cassani,marco.donatelli,stefano.serrac$\}$@uninsubria.it
The work of Marco Donatelli and Stefano Serra-Capizzano was partially supported by MIUR, grant number 20083KLJEZ.
}
\and Marco Donatelli$\mbox{ }^{\dagger}$
\and Stefano Serra-Capizzano$\mbox{ }^{\dagger}$
}

\maketitle
\begin{abstract}
Recently, a fast alternating minimization algorithm for total
variation image deblurring (FTVd) has been presented by Wang, Yang, Yin,
and Zhang [{\em SIAM J. Imaging Sci.}, 1 (2008), pp. 248--272].
The method in a nutshell consists of a discrete Fourier transform-based alternating
minimization algorithm with periodic boundary conditions and in
which two fast Fourier transforms (FFTs) are required per
iteration. In this paper, we propose an alternating
minimization algorithm for the continuous
version of the total  variation image deblurring problem.
We establish convergence of the proposed continuous alternating
minimization algorithm. The continuous setting is very useful to have a
unifying representation of the algorithm, independently of the discrete
approximation of the deconvolution problem, in particular concerning the strategies for dealing with boundary artifacts.
Indeed, an accurate restoration of blurred and noisy images requires a proper
treatment of the boundary.
A discrete version of our continuous alternating minimization algorithm
is obtained following two different strategies:
the imposition of appropriate boundary conditions and the enlargement of the domain.
The first one is computationally useful in the case of a symmetric blur,
while the second one can be efficiently applied for a nonsymmetric blur.
Numerical tests show that our algorithm generates higher quality
images in comparable running times with respect to
the Fast Total Variation deconvolution algorithm.
\end{abstract}
\ \\
{\bf Keywords}: Image deblurring;
reflective and anti-reflective boundary conditions;
total variation; variational methods.
\ \\
{\bf AMS-SC}: 65F10, 65F15, 65Y20, 46.

\section{Introduction}\label{sec1}
The bond between mathematics and visual observations has deep roots, down to the very beginning of science and technology. Nowadays, image processing enters so many different areas of sciences such as engineering, biology, medical sciences, breaking through everyday life. The basic problem of image restoration, once that any kind of corruption has occurred, has been tackled with the aid of computer technology, whose development from one side relies on the implementation of old mathematical tools, such as classical Fourier Analysis, on the other side promotes new mathematical results and throws light on new theoretical as well as applied challenges.

Here we consider the Total Variation (TV) image deblurring problem by
minimizing the following energy functional
\begin{equation} \label {tv}
E(u):=\frac{\alpha}{2}\|\ch u-f\|_{L^2(\Omega)}^2 +  \int_\Omega |\nabla u|\, dx,
\end{equation}
where $\alpha>0$ is a
fidelity parameter,
$\Omega$ is an
open rectangular domain in $\mathbb{R}^2$, $\ch$  is a given
linear blurring operator, $f:\Omega\to\mathbb{R}$ is  the observed
image in $L^2(\Omega)$, $u$ is the unknown image to restore, and
$|\cdot|$ denotes the Euclidean norm \cite{V02,CS05}.
The second term in \eqref{tv} is the total variation of $u$ and represents the energy obstruction to high frequency noise affecting the original image which is out of reach to human eyes and thus made unfavorable. We merely mention that functional of this type arise in different topics such as Cheeger's sets in differential geometry \cite{BK03}, degenerate singular diffusion PDE and the $1$-Laplacian \cite{LCE07}, elastic plastic problems \cite{RT83}.

\noindent The blurring model is assumed to be space-invariant,
namely the Point Spread Function (PSF) is represented by a specific real bivariate
function $h(x-y)$, $x, y\in\Omega$, for some univariate function
$h(\cdot)$ \cite{HNO05}. According to the linear modelling proposed in
the literature \cite{G93}, the observed image $f$ and the original
image $u$ are described by the relation
\begin{equation} \label {nbf}
f(x) = \ch u(x) + \eta(x):= \int_\Omega h(x-s)u(s)ds + \eta(x),
\quad x\in\Omega,
\end{equation}
where the kernel $h$ is the PSF and $\eta$ is the noise; notice that $\inf_{u} E(u)>0$.

A first step in the classical approach is to discretize \eqref{nbf} as follows
\begin{equation} \label {sysf}
\v{f} = A\v{u} + \boldsymbol{\eta},
\end{equation}
with $A \in \R^{m\times n}$ and $\v{u},\v{f},\boldsymbol{\eta}\in\R^n$.
The structure of the matrix $A$ is crucial to define fast deconvolution algorithms since $n$ and $m$ are very large.
In the very last years a lot of interest has been devoted to the definition of algorithms that combine edge preserving strategies with an appropriate treatment of the boundary artifacts \cite{BDS11,S12,AF13,MRF13}.
In the literature
one finds
mainly three strategies in order to obtain both accurate and fast restorations:
\begin{enumerate}
\item[1)] 
Choose and then impose appropriate {\em boundary conditions (BCs)} so that $n=m$ and the matrix $A$ can be
usually diagonalized by fast trigonometric transforms: discrete Fourier transform in the case of periodic BCs \cite{HNO05}, discrete cosine transform for reflective BCs and when the blur is
symmetric in any direction (quadrantally symmetric) \cite{NCT99,HNO05}, a low rank correction of the sine transform for antireflective BCs
can be exploited \cite{S03,ADNS11};
\item[2)] {\em Enlarge the domain} and we use periodic BCs on the larger domain, such that the computations can be carried out by FFTs
and eventually the image is  projected back to the original domain \cite{R05,DS10b,S12};
\item[3)] 
Work with the {\em underdetermined linear system} so that $m<n$: in such a setting the matrix $A$ can be represented as $A=MB$, where $M\in\R^{m\times n}$ is a mask that selects only the valid rows of $B\in\R^{n\times n}$ which can be diagonalized by FFT \cite{BB05,VBDW05,AF13,MRF13}.
\end{enumerate}
\noindent As it is well known, the linear system \eqref{sysf} is
a discrete ill-posed problem and
deconvolution algorithms are usually defined directly on the discrete setting.
For instance, the FTVd algorithm proposed in \cite{WYYZ08} regularizes the linear system \eqref{sysf}
by using  the following discrete version of the TV model \eqref{tv}
\begin{equation}
\min_{\mathbf{u}\in\R^{n}}  \frac{\alpha}{2}\|A \v{u}-\v{f}\|_{\ell^2}^2 +  \sum_{i=1}^{n} \|D_i \v{u}\|,
\end{equation}
where $D_i \v{u} \in \R^{n}$ denotes the discrete gradient of $\v{u}$ at the pixel $i$.

Conversely, in this paper we propose a different approach: we first regularize the continuous problem \eqref{tv}
and thereafter we
settle a discrete version of the continuous algorithm.
Buying the line of \cite{WYYZ08} for the discrete problem, we
set up and prove the convergence of an alternating minimization algorithm for solving \eqref{tv}.
Then, we provide and study its discrete version in connection with strategies 1) and 2).
Let us point out that if in the case of periodic BCs,
the discrete version of our algorithm is equivalent to the one proposed in \cite{WYYZ08}, the two algorithms turn out to be different in the case of antireflective BCs, since our proposal leads to the reblurring approach introduced in \cite{DS05,DEMS06}.
When the PSF is not quandrantally symmetric, accurate BCs, like reflective or antireflective, do not lead to matrices diagonalized by fast trigonometric transforms and hence strategy 2) could be a valid alternative.
On the other hand, when the PSF is quadrantally symmetric, it was theoretically proved (and numerically verified) in \cite{DS10b} that the imposition of reflective BCs and the enlargement of the domain by reflection are equivalent.
Therefore, in such a case the strategy in 1) has to be preferred because it yields the same restoration by fast transforms of smaller size.

\noindent Our approach has two advantages: it leads to fast computations, when antireflective boundary conditions are imposed for quadrantally symmetric PSFs, and it allows
a formal derivation and convergence analysis also for the strategy 2) with an enlarged domain. By working directly with the continuous formulation, we  avoid negligible details depending on the specific kind of discretization.
Unfortunately, for  strategy 3), the arising undetermined linear system cannot be easily treated and appropriate preconditioners should be investigated or different approachs like those in \cite{AF13,MRF13} should be
developed.

The paper is organized as follows. In Section \ref{sec2} we
set up our alternating minimization algorithm and we provide a convergence analysis in the continuous setting. In Section \ref{sec3} we propose two different discretization techniques, the first one based on the imposition of BCs and the second one on the enlargement of the domain. Numerical examples are reported and critically discussed in Section \ref{sec4} and concluding remarks are given in Section \ref{sec5}.

\section{An alternating minimization algorithm}\label{sec2}
In this section we reformulate the minimization problem for \eqref{tv} as a convex constrained minimization which in turn is solved by an alternating minimization algorithm.
At the end of the section, we provide a convergence analysis of the proposed algorithm.

\subsection{Reformulation of the problem}
As in \cite{WYYZ08}, we consider the following constrained convex minimization
\begin{equation} \label {tv-2}
\min\limits_{u,\bz}  \frac{\alpha}{2} \|\ch u-f\|_{L^2(\Omega)}^2 +  \int_\Omega |\bz|\, dx,\qquad \bz(x)=\nabla u(x),\quad x\in\Omega.
\end{equation}
By using the well-known quadratic penalization techniques,
we obtain the following convex minimization problem
\begin{equation} \label {tv-3}
\min\limits_{u,\bz}\, g(u,\bz):= \min\limits_{u,\bz}\,\frac{\alpha}{2} \|\ch u-f\|_{L^2(\Omega)}^2 +  \int_\Omega |\bz|\, dx
+ \frac{\beta}{2} \int_\Omega |\bz-\nabla u|^2\, dx,
\end{equation}
where $\beta>0$ is the penalization parameter. The solution of
problem (\ref{tv-3}) converges to that of problem (\ref{tv-2}) as $\beta\to \infty$
(see for instance \cite{NW99}).

\noindent Let
\[
\cu:=\{u\in L^2(\Omega): \nabla u\in L^2(\Omega)\},
\]
be the classical Sobolev space $H^1(\Omega)$ (see \cite{AF}) and
\[
\cz:=\{\bz:=(z_1,z_2)\in L^2(\Omega)\times L^2(\Omega)\}.
\]
The functional $g(u,\bz)$ turns out to be well defined for $(u,\bz)\in \cu\times\cz$ and we look for
\[
g(\cu,\cz):=\min\limits_{u\in\cu, \bz\in\cz}g(u,\bz).
\]

\subsection{An alternating minimization algorithm}

In the spirit of Csisz\'{a}r and Tusn\'{a}dy \cite{CT84}, we solve
problem (\ref{tv-3}) by alternatively minimizing $g(u,\bz)$ with
respect to $\bz$ while fixing $u$, and vice versa. The alternating
minimization algorithm is described as follows:

\begin{algorithm}\label{alg}
Given an arbitrary $u^0\in \cu$. For
$k=0,1,\ldots$,
\begin{itemize}
\item[(i)] minimize $g(u^k,\bz)$ over $\bz\in \cz$ to get $\bz=\bz^{k}$, and then
\item[(ii)] minimize $g(u,\bz^k)$ over $u\in \cu$ to get $u=u^{k+1}$.
\end{itemize} 
\end{algorithm}

Clearly, for the sequences $\{u^k\}$ and   $\{\bz^k\}$ generated
by the alternating minimization algorithm, we have
\[
g(u^{k+1},\bz^{k+1})\le g(u^{k+1},\bz^k)\le g(u^k,\bz^k),\quad k=0,1,\ldots.
\]
In the  alternating minimization algorithm, we need to solve two
auxiliary problems. On the one hand, for a fixed $u\in\cu$, we
solve the  problem
\begin{equation} \label {subp-1}
\min\limits_{\bz\in\cz}\, \int_\Omega |\bz|\, dx + \frac{\beta}{2} \int_\Omega |\bz-\nabla u|^2\, dx.
\end{equation}
The first-order optimality condition for (\ref{subp-1}) is given
by \BE\label{subp1-op} \bz = {\bf 0}\quad\mbox{or}\quad
\frac{\bz}{|\bz|}+\beta(\bz-\nabla u)={\bf 0}. \EE It is easy to
check that the solution to (\ref{subp1-op}) takes the form
\BE\label{subp1-sol} \bz = \max\left\{|\nabla
u|-\frac{1}{\beta},0\right\}\frac{\nabla u}{|\nabla u|}, \EE where
we set $0\cdot (0/0)=0$.

\noindent On the other hand, for a fixed $\bz\in\cz$, we proceed
by solving the minimization problem
\begin{equation} \label {subp-2}
\min\limits_{u\in\cu}\, \frac{\alpha}{2} \|\ch u-f\|_{L^2(\Omega)}^2  + \frac{\beta}{2} \int_\Omega |\bz-\nabla u|^2\, dx.
\end{equation}
The first-order optimality condition for (\ref{subp-2})  satisfies the Euler-Lagrange equation:
\[
\alpha\ch^*(\ch u-f) - \beta \nabla \cdot (\nabla u -\bz)=0
\]
that is
\BE\label{subp2-op}
\ch^*\ch u- \frac{\beta}{\alpha} \Delta u =\ch^*f - \frac{\beta}{\alpha} \nabla\cdot\bz
\EE
subject to homogeneous Neumann boundary conditions (BCs), where $\ch^*$ denotes the adjoint of the operator $\ch$ and $\Delta=\nabla \cdot \nabla$ is the Laplace operator.

\noindent For the operator $\ch$ defined in (\ref{nbf}), the adjoint $\ch^*$ is defined as
\[
\langle p,\ch^*q\rangle_{L^2(\Omega)} = \langle \ch p,q\rangle_{L^2(\Omega)},
\]
for all smooth functions $p,q$ with compact support in $\Omega$, where $\langle \cdot,\cdot\rangle_{L^2(\Omega)}$ is the inner product on $L^2(\Omega)$ defined as
\[
\langle p,q\rangle_{L^2(\Omega)} = \int_\Omega p(x)\overline{q(x)}dx,  \ \ \ p,q \in L^2(\Omega).
\]
Notice that
\begin{eqnarray*}
\langle \ch p,q\rangle_{L^2(\Omega)} &=& \int_\Omega \left(\int_\Omega h(x-s)p(s)ds\right)\overline{q(x)}dx \\
   &=& \int_\Omega p(s)\left(\int_\Omega h(x-s)\overline{q(x)}dx \right)ds\\
   &=& \int_\Omega p(s)\overline{\left(\int_\Omega \overline{h(x-s)}\,q(x)dx \right)}ds
\end{eqnarray*}
and thus
\[
\ch^*q(s) = \int_\Omega \overline{h(x-s)}\,q(x)dx.
\]
Therefore, if the kernel $h$ is a real bivariate function $h(x-y)$, $x, y\in\Omega$, like it happens in image
deblurring problems, then we have
\BE\label{eq:Hstar}
\ch^*u(x) = \int_\Omega h(s-x)u(s)ds = \int_\Omega h(-(x-s))u(s)ds.
\EE

\noindent Similarly, the adjoint $\nabla^*$ of  the differential operator $\nabla$ satisfies
\[
\langle p,\nabla^*\cdot q\rangle_{L^2(\Omega)} = \langle \nabla\cdot p,q\rangle_{L^2(\Omega)}
\]
for all smooth functions $p,q$ with compact support in $\Omega$. Then we have
\BE\label{eq:Nstar}
\nabla^*\cdot q = - \nabla\cdot q.
\EE
In fact, it is easy to verify that
\[
q\nabla\cdot p-p\nabla^*\cdot q= \nabla\cdot w,
\]
where $w:=p\,q$.  By the well-known divergence theorem, we get
\[
\int_\Omega (q \nabla\cdot p-p\nabla^*\cdot q)dx = \int_\Omega \nabla\cdot w dx
= \int_{\partial\Omega} w\cdot\bn\, dS=0,
\]
where $\bn$ is the outward pointing normal on $\partial\Omega$. As a consequence, equation (\ref{subp2-op}) takes the following form
\BE\label{subp2-op2}
\ch^*\ch u + \frac{\beta}{\alpha} \nabla^* \cdot \nabla u =\ch^*f + \frac{\beta}{\alpha} \nabla^*\cdot\bz.
\EE
Summarizing, the alternating minimization Algorithm \ref{alg} can be explicitly written as follows: given an arbitrary $u^0\in \cu$, for $k=0,1,\ldots$
\begin{itemize}
\item[(i)]  we set
\begin{equation}\label{eq:alg1}
\bz^{k} = \max\left\{|\nabla u^k|-\frac{1}{\beta},0\right\}\frac{\nabla u^k}{|\nabla u^k|},
\end{equation}
\item[(ii)] we compute $u^{k+1}$ by solving
\begin{equation}\label{eq:alg2}
\ch^*\ch u^{k+1} + \frac{\beta}{\alpha} \nabla^* \cdot \nabla u^{k+1} = \ch^*f + \frac{\beta}{\alpha} \nabla^*\cdot\bz^k.
\end{equation}
\end{itemize}

\subsection{Convergence Analysis}
In this section, we shall establish the convergence of the
proposed alternating minimization Algorithm \ref{alg}.  Let
$\theta:\cu\times\cu\to \R^+$ defined by \BE\label{fun:h} \theta(u,u'):=\frac{\alpha}{2}\|\ch
(u-u')\|_{L^2(\Omega)}^2 + \frac{\beta}{2} \int_\Omega |\nabla
u-\nabla u'|^2\, dx, \quad \forall u,u'\in\cu. \EE Notice that
$\theta(u,u)=0$ for all $u\in \cu$.

\noindent In order to prove the convergence of the 
Algorithm \ref{alg}, we proceed by proving some preliminary lemmas. Let us
first recall the following result from \cite{CT84}:
\begin{lemma}\label{lem:abn}
For $k=0,1,\ldots$, let $a^k$ and $b^k$ be extended real numbers greater than $-\infty$ and $c$ is a finite real number such that
\[
a^k+b^k\le b^{k-1}+c,\quad k=1,2,\ldots
\]
and
\[
\limsup_{k\to\infty}b^k>-\infty,\quad b^{k_0} <+\infty, \quad \mbox{for some } k_0.
\]
Then
\[
\liminf_{k\to\infty}a^k\le c.
\]
\end{lemma}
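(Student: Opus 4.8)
The plan is to argue by contradiction, exploiting the monotonic decay that a persistent lower bound on $a^k$ would force on the sequence $\{b^k\}$. Suppose the conclusion fails, i.e. $\liminf_{k\to\infty} a^k > c$. Then there are $\varepsilon>0$ and an index $K$, which we may choose with $K > k_0$ (and $K\ge 1$), such that $a^k \ge c+\varepsilon$ for all $k\ge K$.

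First I would check that the $b^k$ are genuinely finite from $k_0$ onward, since this is what makes the telescoping argument bite. By hypothesis $b^k > -\infty$ for every $k$ and $b^{k_0} < +\infty$, so $b^{k_0}\in\R$. Plugging this into the recursion $a^k+b^k\le b^{k-1}+c$ together with $a^k>-\infty$ gives $b^{k_0+1}\le b^{k_0}+c-a^{k_0+1}<+\infty$, and by induction $b^k<+\infty$, hence $b^k\in\R$, for all $k\ge k_0$. In particular $b^{K-1}$ is a finite real number.

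Next, for every $k\ge K$ the recursion and the lower bound on $a^k$ yield
\[
b^k \le b^{k-1}+c-a^k \le b^{k-1}-\varepsilon .
\]
Iterating from $k=K$ gives $b^{K+m}\le b^{K-1}-(m+1)\varepsilon$ for all $m\ge 0$, and since $b^{K-1}$ is finite this forces $b^k\to -\infty$ as $k\to\infty$. This contradicts the hypothesis $\limsup_{k\to\infty} b^k > -\infty$. Hence $\liminf_{k\to\infty} a^k \le c$, which is the assertion of the lemma.

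I do not expect a genuine obstacle here; the argument is elementary. The only point that requires care is the extended-real bookkeeping: one must verify that $b^{K-1}$ is actually finite (not $-\infty$) before telescoping, as otherwise the bound $b^{K+m}\le b^{K-1}-(m+1)\varepsilon$ would be vacuous. This is precisely why the hypotheses $b^k>-\infty$ for all $k$ and $b^{k_0}<+\infty$ are imposed, and the induction above is the place where they are used.
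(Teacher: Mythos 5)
Your argument is correct. The paper itself gives no proof of this lemma --- it is simply recalled from Csisz\'ar and Tusn\'ady \cite{CT84} --- so there is no internal proof to compare against; your contradiction-plus-telescoping argument is the standard one and is complete. The one place where care is genuinely needed, namely verifying that $b^{K-1}$ is a finite real number before iterating $b^k\le b^{k-1}-\varepsilon$, you handle correctly via the induction showing $b^k<+\infty$ for all $k\ge k_0$ and the choice $K>k_0$. (A cosmetic remark: in that induction the cleanest phrasing is to note that if $b^{k_0+1}=+\infty$ then, since $a^{k_0+1}>-\infty$, the left-hand side $a^{k_0+1}+b^{k_0+1}$ equals $+\infty$, contradicting the finite upper bound $b^{k_0}+c$; this avoids subtracting the possibly infinite quantity $a^{k_0+1}$. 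The conclusion is the same.)
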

The following two lemmas rely on the definition of 
the function $\theta$  expressed in (\ref{fun:h}) and are crucial in proving Lamma \ref{lem:five}.
\begin{lemma}\label{lem:three}
Let $\{u^k\}$ and $\{\bz^k\}$ be the sequences generated by
Algorithm \ref{alg}. Then \BE\label{eq:three}
\theta(u,u^{k+1})+g(u^{k+1},\bz^k)\le g(u,\bz^k) \EE for all
$u\in\cu$ and  $k=0,1,\ldots$ .
\end{lemma}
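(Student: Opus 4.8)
The plan is to exploit the fact that $u^{k+1}$ is, by construction in step (ii) of Algorithm \ref{alg}, the exact minimizer of the strictly convex functional $u\mapsto g(u,\bz^k)$ over $\cu$, and then to quantify how much $g(\cdot,\bz^k)$ exceeds its minimum value at an arbitrary $u\in\cu$. Concretely, write out the two subproblem objectives
\[
G_k(u):=g(u,\bz^k)=\frac{\alpha}{2}\|\ch u-f\|_{L^2(\Omega)}^2+\int_\Omega|\bz^k|\,dx+\frac{\beta}{2}\int_\Omega|\bz^k-\nabla u|^2\,dx .
\]
The term $\int_\Omega|\bz^k|\,dx$ does not depend on $u$, so minimizing $G_k$ is the same as minimizing the quadratic functional $Q_k(u):=\frac{\alpha}{2}\|\ch u-f\|_{L^2}^2+\frac{\beta}{2}\int_\Omega|\bz^k-\nabla u|^2\,dx$. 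The key observation is that $Q_k$ is quadratic in $u$, and for any quadratic $Q$ with minimizer $u^\star$ one has the exact identity $Q(u)-Q(u^\star)=\tfrac12\,B(u-u^\star,u-u^\star)$, where $B$ is the (symmetric, positive semidefinite) bilinear form associated with the Hessian of $Q$. Here that bilinear form, evaluated on the increment $v:=u-u^{k+1}$, is precisely
\[
\frac{\alpha}{2}\|\ch v\|_{L^2(\Omega)}^2+\frac{\beta}{2}\int_\Omega|\nabla v|^2\,dx=\theta(u,u^{k+1}),
\]
by the definition (\ref{fun:h}) of $\theta$. This yields exactly $G_k(u)=G_k(u^{k+1})+\theta(u,u^{k+1})$, which rearranges to (\ref{eq:three}).

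The steps I would carry out, in order: first, fix $k$ and fix $\bz^k$, and observe that $u^{k+1}$ minimizes $G_k$ over $\cu$ (this is the defining property of step (ii), equivalently the Euler--Lagrange equation (\ref{eq:alg2})); second, expand $Q_k(u)=Q_k(u^{k+1}+v)$ directly, collecting the terms into the value at $u^{k+1}$, a cross term that is linear in $v$, and the purely quadratic term in $v$; third, identify the cross term as the Gâteaux derivative of $Q_k$ at $u^{k+1}$ applied to $v$, and note it vanishes because $u^{k+1}$ is the minimizer — this is where the optimality of $u^{k+1}$ (equivalently (\ref{subp2-op2})) enters — so that after an integration by parts using $\nabla^*\cdot=-\nabla\cdot$ from (\ref{eq:Nstar}) the cross term equals $\alpha\langle \ch u^{k+1}-f,\ch v\rangle+\beta\langle\nabla u^{k+1}-\bz^k,\nabla v\rangle=0$; fourth, recognize the remaining quadratic term in $v$ as $\theta(u,u^{k+1})$ and add back the $\bz^k$-term to pass from $Q_k$ to $G_k=g(\cdot,\bz^k)$, obtaining $g(u,\bz^k)=g(u^{k+1},\bz^k)+\theta(u,u^{k+1})$, hence (\ref{eq:three}) with equality (and in particular $\le$).

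The main technical point to be careful about, rather than a genuine obstacle, is the integration by parts: the vanishing of the cross term relies on the homogeneous Neumann boundary conditions under which (\ref{subp2-op}) is posed, so that the boundary terms from $\int_\Omega \nabla\cdot((\nabla u^{k+1}-\bz^k)v)\,dx$ drop out, exactly as in the divergence-theorem argument already used in the excerpt to establish (\ref{eq:Nstar}). One should also note that all three quantities $\theta(u,u^{k+1})$, $g(u^{k+1},\bz^k)$, $g(u,\bz^k)$ are finite for $u\in\cu=H^1(\Omega)$, so the rearrangement is legitimate. A minor remark: the lemma is stated as an inequality $\le$, but the argument in fact gives equality, since $\theta\ge0$; this will be convenient later when combining with Lemma \ref{lem:abn}.
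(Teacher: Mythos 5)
Your proof is correct, and it reaches a slightly stronger conclusion than the paper does: you obtain the exact identity $g(u,\bz^k)=g(u^{k+1},\bz^k)+\theta(u,u^{k+1})$, whereas the paper only establishes the inequality. The underlying computation is the same in both cases --- the second-order expansion of the quadratic part of $g(\cdot,\bz^k)$ around $u^{k+1}$, whose purely quadratic remainder is exactly $\theta(u,u^{k+1})$ --- but the routes differ. The paper parametrizes the segment $u_t=(1-t)u+tu^{k+1}$, forms the difference quotient $\big(\phi(1)-\phi(t)\big)/(1-t)\le 0$, and passes to the limit $t\to 1$ via Lebesgue's monotone convergence theorem applied to the convex integrand; this only uses the \emph{one-sided} optimality of $t=1$ on the segment, so the cross term survives as a term of the correct sign inside the resulting inequality. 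You instead invoke the full first-order optimality of $u^{k+1}$ over the linear space $\cu$, which kills the cross term outright and gives equality without any limiting argument. Your approach is cleaner and avoids the measure-theoretic machinery; the paper's approach has the (unused here) advantage of surviving verbatim if the minimization were restricted to a convex subset of $\cu$, and it is the template reused in Lemma \ref{lem:four}, where the $\int_\Omega|\bz|\,dx$ term is genuinely non-quadratic and only an inequality can be extracted. One small remark: your worry about integration by parts and the Neumann boundary conditions is unnecessary. The cross term $\alpha\langle \ch u^{k+1}-f,\ch v\rangle+\beta\langle\nabla u^{k+1}-\bz^k,\nabla v\rangle$ vanishes directly because it is the G\^{a}teaux derivative of $g(\cdot,\bz^k)$ at its minimizer in the direction $v\in\cu$, i.e.\ the weak form of \eqref{subp2-op}; no passage to the strong form, and hence no boundary term, is needed.
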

\begin{proof}
By assumption, $g(u^{k+1},\bz^k)\le g(u,\bz^k)<+\infty$ for all $u\in\cu$. Note that $u_t:=(1-t)u+tu^{k+1}\in \cu$ for all $0<t\le 1$. Then the minimum of the function
\[
\phi(t):=g(u_t,\bz^k)
\]
is achieved for $t=1$. Hence,
\begin{eqnarray}\label{phi:dq}
0\ge \frac{\phi(1)-\phi(t)}{1-t} &=&\int_\Omega \frac{1}{1-t}\Big[\big(\frac{\alpha}{2}(\ch u^{k+1}-f)\overline{(\ch u^{k+1}-f)} + \frac{\beta}{2}  |\bz^k-\nabla u^{k+1}|^2\big) \nonumber \\
&&\quad - \big(\frac{\alpha}{2} (\ch u_t-f)\overline{(\ch u_t-f)} + \frac{\beta}{2} |\bz^k-\nabla u_t|^2\big)\Big]\, dx,
\end{eqnarray}
where the integrand is a difference quotient of the convex function
\[
\frac{\alpha}{2} (\ch u_t-f)\overline{(\ch u_t-f)} + \frac{\beta}{2} |\bz^k-\nabla u_t|^2
\]
of $t$, which is non-increasing as $t\to 1$.
As $t\to 1$ in (\ref{phi:dq}), using Lebesgue's monotone convergence theorem, we get
\begin{eqnarray*}
0&\ge& \int_\Omega \frac{d}{dt}\big(\frac{\alpha}{2} (\ch u_t-f)\overline{(\ch u_t-f)} + \frac{\beta}{2} |\bz^k-\nabla u_t|^2\big)_{t=1} \, dx \\
&=& \frac{\alpha}{2} \int_\Omega\big( (\ch u^{k+1}-f)\overline{(\ch u^{k+1}-f)}-(\ch u-f)\overline{(\ch u-f)}
+(\ch (u-u^{k+1})\overline{\ch (u-u^{k+1})}\big)\, dx\\
&&\quad + \frac{\beta}{2} \int_\Omega\big(|\bz^k-\nabla u^{k+1}|^2-|\bz^k-\nabla u|^2+ \nabla(u-u^{k+1})\overline{\nabla(u-u^{k+1})}\big) \, dx,
\end{eqnarray*}
which implies (\ref{eq:three}).
\end{proof}

\begin{lemma}\label{lem:four}
Let $\{u^k\}$ and $\{\bz^k\}$ be the sequences  generated by
Algorithm \ref{alg}. Then \BE\label{eq:four}
g(u,\bz^{k+1})\le \theta(u,u^{k+1})+g(u,\bz) \EE for all
$u\in\cu$, $\bz\in\cz$, and  $k=0,1,\ldots$ .
\end{lemma}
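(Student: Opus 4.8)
The plan is to follow the same pattern used in Lemma~\ref{lem:three}, but now minimizing over $\bz$ rather than over $u$. Recall from step~(i) of Algorithm~\ref{alg} that $\bz^{k+1}$ is a minimizer of $g(u^{k+1},\cdot)$ over $\cz$, so the key fact to exploit is that $g(u^{k+1},\bz^{k+1})\le g(u^{k+1},\bz)$ for every $\bz\in\cz$, together with a strong-convexity-type identity for the $\bz$-dependence of $g$. First I would write out $g(u^{k+1},\bz)$ explicitly and observe that, as a function of $\bz$, it equals $\int_\Omega|\bz|\,dx$ plus the quadratic $\tfrac{\beta}{2}\int_\Omega|\bz-\nabla u^{k+1}|^2\,dx$ (the fidelity term does not involve $\bz$). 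Since the quadratic part has Hessian $\beta I$, for any $\bz,\bz'\in\cz$ one has the exact parallelogram-type identity
\[
\frac{\beta}{2}\int_\Omega|\bz-\nabla u^{k+1}|^2\,dx
= \frac{\beta}{2}\int_\Omega|\bz'-\nabla u^{k+1}|^2\,dx
+\beta\int_\Omega\langle \bz'-\nabla u^{k+1},\,\bz-\bz'\rangle\,dx
+\frac{\beta}{2}\int_\Omega|\bz-\bz'|^2\,dx.
\]

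Next I would take $\bz'=\bz^{k+1}$ in this identity and combine it with the optimality of $\bz^{k+1}$. Because $\bz\mapsto g(u^{k+1},\bz)$ is convex and $\bz^{k+1}$ is its global minimizer, the first-order variational inequality gives, for every $\bz\in\cz$,
\[
\int_\Omega\Big(\partial|\bz^{k+1}|\Big)\cdot(\bz-\bz^{k+1})\,dx
+\beta\int_\Omega\langle\bz^{k+1}-\nabla u^{k+1},\,\bz-\bz^{k+1}\rangle\,dx\ \ge\ 0,
\]
where the subgradient term is $\le \int_\Omega|\bz|\,dx-\int_\Omega|\bz^{k+1}|\,dx$ by convexity of $|\cdot|$. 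Substituting the parallelogram identity to eliminate the cross term, and then replacing $u^{k+1}$ by a general $u\in\cu$ only where it improves the estimate, I would arrive at
\[
g(u,\bz^{k+1})\le g(u,\bz)+\frac{\beta}{2}\int_\Omega|\nabla u-\nabla u^{k+1}|^2\,dx
+\frac{\alpha}{2}\|\ch(u-u^{k+1})\|_{L^2(\Omega)}^2 -\frac{\beta}{2}\int_\Omega|\bz-\bz^{k+1}|^2\,dx,
\]
and dropping the last nonpositive term yields exactly \eqref{eq:four}, since the remaining correction terms sum to $\theta(u,u^{k+1})$.

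To handle the nonsmoothness of $|\bz|$ rigorously, rather than invoking subdifferential calculus in the infinite-dimensional setting, I would mirror the difference-quotient argument of Lemma~\ref{lem:three}: set $\bz_t:=(1-t)\bz+t\bz^{k+1}$, note $\bz_t\in\cz$ and that $\psi(t):=g(u^{k+1},\bz_t)$ attains its minimum at $t=1$, form $\big(\psi(1)-\psi(t)\big)/(1-t)\le0$, and pass to the limit $t\to1^-$ using Lebesgue's monotone convergence theorem on the integrand (which is monotone in $t$ as in the previous lemma). This produces the same inequality with the $\bz$-gradient of $g(u^{k+1},\cdot)$ evaluated at $\bz^{k+1}$; the explicit quadratic-plus-$|\cdot|$ structure then gives the stated bound after adding and subtracting, and after enlarging $u^{k+1}$ to arbitrary $u$ via the $\theta$ term.

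The main obstacle I anticipate is the bookkeeping in the final algebraic step: one must verify that the leftover second-order terms after completing the square are precisely $\theta(u,u^{k+1})$ and that the $\bz$-difference term $\tfrac{\beta}{2}\int_\Omega|\bz-\bz^{k+1}|^2\,dx$ comes out with the right (nonpositive) sign so it can be discarded. A secondary technical point is ensuring the monotone convergence theorem applies to the $|\bz_t|$ part of the integrand — its difference quotient in $t$ need not be monotone pointwise in the same way the smooth quadratic is, so one may instead bound $|\bz_t|$ directly by $(1-t)|\bz|+t|\bz^{k+1}|$ (convexity of the Euclidean norm) to get the needed one-sided estimate without differentiating through $|\cdot|$ at all.
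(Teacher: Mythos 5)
Your proposal follows essentially the same route as the paper: exploit the optimality of $\bz^{k+1}$ for $g(u^{k+1},\cdot)$ via the difference quotient along $\bz_t=(1-t)\bz+t\bz^{k+1}$ with Lebesgue's monotone convergence theorem, bound the resulting directional-derivative term of $\int_\Omega|\cdot|\,dx$ by convexity of the Euclidean norm, and close with a completing-the-square identity whose remainder is $-\tfrac{\beta}{2}\int_\Omega|(\bz^{k+1}-\bz)-(\nabla u^{k+1}-\nabla u)|^2\,dx\le 0$. One correction to your displayed intermediate inequality: the term $-\tfrac{\beta}{2}\int_\Omega|\bz-\bz^{k+1}|^2\,dx$ does not survive to the final estimate, since it is exactly consumed in completing the square against the cross term $\beta\int_\Omega\langle\bz^{k+1}-\bz,\nabla u^{k+1}-\nabla u\rangle\,dx$; the nonpositive quantity one discards is the full square above rather than the $\bz$-difference alone, and the lemma's conclusion is unaffected. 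Your secondary worry about monotone convergence for the $|\bz_t|$ part is also unnecessary: $t\mapsto|\bz_t|$ is convex because $\bz_t$ is affine in $t$, so its difference quotients are monotone exactly as for the quadratic part, which is all the paper's argument uses.
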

\begin{proof}
By assumption, $g(u^{k+1},\bz^{k+1})\le g(u^{k+1},\bz)<+\infty$ for all $\bz\in\cz$. Note that $\bz_t:=(1-t)\bz+t\bz^{k+1}\in\cz$ for all $0<t\le 1$. Then the minimum of the function
\[
\psi(t):=g(u^{k+1},\bz_t)
\]
is attained at $t=1$. Hence,
\BE\label{psi:dq}
0\ge \frac{\psi(1)-\psi(t)}{1-t} =\int_\Omega \frac{1}{1-t}\Big[\big(|z^{k+1}| + \frac{\beta}{2}  |\bz^{k+1}-\nabla u^{k+1}|^2\big) - \big(|\bz_t| + \frac{\beta}{2} |\bz_t-\nabla u^{k+1}|^2\big)\Big]\, dx,
\EE
where the integrand is a difference quotient of the convex function
\[
|\bz_t| + \frac{\beta}{2} |\bz_t-\nabla u^{k+1}|^2
\]
of $t$, which is non-increasing as $t\to 1$. Moreover, as c$t\to 1$ in (\ref{psi:dq}), using Lebesgue's monotone convergence theorem, we find
\begin{eqnarray}\label{ineq:four}
0&\le& -\int_\Omega \frac{d}{dt}\big(|\bz_t| + \frac{\beta}{2} |\bz_t-\nabla u^{k+1}|^2\big)_{t=1} \, dx \nonumber\\
&=& \int_\Omega\Big( \frac{\langle \bz^{k+1},\bz \rangle}{|\bz^{k+1}|}-|\bz^{k+1}|\Big)\, dx
+ \frac{\beta}{2} \int_\Omega 2\langle \bz^{k+1}-\nabla u^{k+1},\bz-\bz^{k+1}\rangle \, dx \nonumber\\
&\le& \int_\Omega\Big(|\bz|-|\bz^{k+1}|\Big)\, dx
+ \frac{\beta}{2} \int_\Omega 2\langle \bz^{k+1}-\nabla u^{k+1},\bz-\bz^{k+1}\rangle \, dx,
\end{eqnarray}
where $\langle\cdot,\cdot\rangle$ denotes the Euclidean inner product.

On the other hand, it is easy to check that
\begin{eqnarray*}
&&\frac{\beta}{2} \int_\Omega \Big(2\langle \bz^{k+1}-\nabla u^{k+1},\bz-\bz^{k+1}\rangle + |\bz^{k+1}-\nabla u|^2 -|\bz-\nabla u|^2-|\nabla u-\nabla u^{k+1}|^2\Big)\, dx\\
&=& -\frac{\beta}{2} \int_\Omega |(\bz^{k+1}-\bz)-(\nabla u^{k+1}-\nabla u)|^2\le 0.
\end{eqnarray*}

\noindent This, together with (\ref{ineq:four}), yields  (\ref{eq:four}).
\end{proof}

\noindent By combing Lemma \ref{lem:three} and Lemma \ref{lem:four}, we deduce the following result.
\begin{lemma}\label{lem:five}
Let $\{u^k\}$ and $\{\bz^k\}$ be the sequences generated by
Algorithm \ref{alg}. Then \BE\label{eq:five}
g(u,\bz^{k})+g(u^{k},\bz^{k})\le g(u,\bz) + g(u,\bz^{k-1}) \EE for
all $u\in\cu$, $\bz\in\cz$, and  $k=1,2,\ldots$ .
\end{lemma}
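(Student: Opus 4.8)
The plan is to derive \eqref{eq:five} by splicing together Lemma~\ref{lem:three} and Lemma~\ref{lem:four} after a shift of the iteration index, and then to invoke the monotonicity built into step~(i) of Algorithm~\ref{alg}. Concretely, I would first replace $k$ by $k-1$ in Lemma~\ref{lem:three} (legitimate for every $k\ge 1$) to get
\[
\theta(u,u^{k})+g(u^{k},\bz^{k-1})\le g(u,\bz^{k-1}),\qquad u\in\cu,
\]
and likewise replace $k$ by $k-1$ in Lemma~\ref{lem:four} to get
\[
g(u,\bz^{k})\le \theta(u,u^{k})+g(u,\bz),\qquad u\in\cu,\ \bz\in\cz .
\]

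Before adding these I would observe that $g$ is finite on all of $\cu\times\cz$: each of the three terms defining $g$ is a finite integral for $u\in\cu$, $\bz\in\cz$ over the bounded domain $\Omega$ (the fidelity term because $\ch u,f\in L^2(\Omega)$, the term $\int_\Omega|\bz|\,dx$ because $|\bz|\in L^2(\Omega)\subset L^1(\Omega)$, and the coupling term because $\bz,\nabla u\in L^2(\Omega)$). Hence the first displayed inequality forces $\theta(u,u^{k})\le g(u,\bz^{k-1})-g(u^{k},\bz^{k-1})<+\infty$, so $\theta(u,u^{k})$ is a genuine real number and may be cancelled. Adding the two inequalities and cancelling $\theta(u,u^{k})$ then yields
\[
g(u,\bz^{k})+g(u^{k},\bz^{k-1})\le g(u,\bz)+g(u,\bz^{k-1}).
\]
To finish, I would use that $\bz^{k}$ minimizes $g(u^{k},\cdot)$ over $\cz$ — this is exactly step~(i) of Algorithm~\ref{alg} applied with the fixed image $u^{k}$ — so that $g(u^{k},\bz^{k})\le g(u^{k},\bz^{k-1})$; substituting this bound into the left-hand side above gives precisely \eqref{eq:five}.

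I do not anticipate any real difficulty: this is the ``five-point'' inequality obtained by combining the ``three-point'' and ``four-point'' inequalities in the spirit of Csisz\'ar and Tusn\'ady \cite{CT84}. The only points requiring a little attention are the bookkeeping of the index shift — which is why the statement is restricted to $k\ge 1$, so that $\bz^{k-1}$ is defined — and the finiteness remark that makes the cancellation of the term $\theta(u,u^{k})$ legitimate.
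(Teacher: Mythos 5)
Your proof is correct and follows essentially the same route as the paper: add Lemma~\ref{lem:three} and Lemma~\ref{lem:four} (with the index shifted by one), cancel $\theta(u,u^{k})$, and conclude via $g(u^{k},\bz^{k})\le g(u^{k},\bz^{k-1})$ from step~(i) of Algorithm~\ref{alg}. The only difference is your explicit finiteness check justifying the cancellation of $\theta(u,u^{k})$, which the paper leaves implicit.
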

\begin{proof}
By adding  (\ref{eq:three}) and (\ref{eq:four}), we get
\[
g(u,\bz^{k})+g(u^{k},\bz^{k-1})\le g(u,\bz) + g(u,\bz^{k-1}),\quad k=1,2,\ldots.
\]
This, together with $g(u^{k},\bz^{k-1})\ge g(u^{k},\bz^{k})$, leads to (\ref{eq:five}).
\end{proof}

For any given $u\in\cu$, we have the following result on the monotonicity of $\theta(u,u^k)$ in $k$, where $\theta$ is defined in (\ref{fun:h}).
\begin{lemma}\label{lem:h-mono}
Let the sequences $\{u^k\}$ and $\{\bz^k\}$ be generated by
Algorithm \ref{alg}. Then, for any $u\in\cu$,
\BE\label{eq:h-mono}
\theta(u,u^{k+1})\le \theta(u,u^k)
\EE
for all  $k=0,1,\ldots$ .
\end{lemma}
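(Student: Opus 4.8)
The plan is to derive the monotonicity of $\theta(u,u^k)$ in $k$ as a direct consequence of Lemma \ref{lem:five}, by choosing the free parameters cleverly and using the basic monotonicity of $g$ along the iterates. First I would recall that by construction (as already noted right after Algorithm \ref{alg}) the sequence $g(u^k,\bz^k)$ is nonincreasing, and in particular $g(u^{k+1},\bz^{k+1})\le g(u^{k+1},\bz^k)$. The key observation is that $g(u,\bz^k) - g(u,\bz)$ contains no term involving $u$ other than through the fidelity and gradient-coupling structure; more precisely, a short computation shows that for fixed $\bz,\bz'$ the difference $g(u,\bz)-g(u,\bz')$ is \emph{independent of} $u$ except through the cross term $-\beta\int_\Omega \langle \bz-\bz',\nabla u\rangle\,dx$. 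This is the structural fact that makes the telescoping work.

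The main steps, in order, would be: (1) Write down Lemma \ref{lem:five} with the specific choice $\bz = \bz^{k}$ on the right-hand side... actually, more to the point, I would apply \eqref{eq:five} and then separately apply it with shifted index, or instead use \eqref{eq:three} and \eqref{eq:four} directly. Concretely: from Lemma \ref{lem:four} with $\bz=\bz^{k-1}$ we get $g(u,\bz^k)\le \theta(u,u^k)+g(u,\bz^{k-1})$, i.e. $g(u,\bz^k)-g(u,\bz^{k-1})\le \theta(u,u^k)$. Similarly from Lemma \ref{lem:three}, $\theta(u,u^{k+1})\le g(u,\bz^k)-g(u^{k+1},\bz^k)$. (2) Now I want to bound $\theta(u,u^{k+1})$ by $\theta(u,u^k)$. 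Using $g(u^{k+1},\bz^k)\ge g(u^{k+1},\bz^{k+1})$ is the wrong direction; instead the cleaner route is to exploit \eqref{eq:five}: taking $\bz=\bz^k$ in \eqref{eq:five} gives $g(u,\bz^k)+g(u^k,\bz^k)\le g(u,\bz^k)+g(u,\bz^{k-1})$, hence $g(u^k,\bz^k)\le g(u,\bz^{k-1})$ for all $u$ — which upon taking the infimum over $u$ and combining with Lemma \ref{lem:three} at index $k$ (which reads $\theta(u,u^{k+1})+g(u^{k+1},\bz^k)\le g(u,\bz^k)$) should telescope.

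The honest approach, and the one I would actually pursue, is: add \eqref{eq:three} at step $k$ and \eqref{eq:four} at step $k-1$ (the latter with the same $u$ and with $\bz=\bz^{k-1}$), which after cancellation gives
\[
\theta(u,u^{k+1})+\theta(u,u^{k})+g(u^{k+1},\bz^{k})\le g(u^{k},\bz^{k-1})+ \theta(u,u^k),
\]
wait — that over-counts $\theta(u,u^k)$. Let me instead just chain: \eqref{eq:three} gives $\theta(u,u^{k+1})\le g(u,\bz^k)-g(u^{k+1},\bz^k)$; I then replace $g(u,\bz^k)$ using Lemma \ref{lem:four} at index $k-1$, namely $g(u,\bz^k)\le \theta(u,u^k)+g(u,\bz^{k-1})$, obtaining $\theta(u,u^{k+1})\le \theta(u,u^k)+g(u,\bz^{k-1})-g(u^{k+1},\bz^k)$. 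So it remains to show $g(u,\bz^{k-1})-g(u^{k+1},\bz^k)\le 0$, i.e. $g(u^{k+1},\bz^k)\ge g(u,\bz^{k-1})$ for every $u\in\cu$. The anticipated main obstacle is precisely establishing this last inequality: it does not follow from the elementary monotonicity $g(u^{k+1},\bz^k)\le g(u^k,\bz^k)\le\cdots$, which goes the wrong way. I expect one must instead take $\bz=\bz^k$ in Lemma \ref{lem:five} (giving $g(u^k,\bz^k)\le g(u,\bz^{k-1})$ for all $u$, i.e. $g(u^k,\bz^k)=\min_{u}g(u,\bz^{k-1})$ up to the argument, which is consistent since $u^k$ is the minimizer of $g(\cdot,\bz^{k-1})$ by step (ii) of the algorithm), and then use that $u^{k+1}$ minimizes $g(\cdot,\bz^k)$ so that $g(u^{k+1},\bz^k)\le g(u^k,\bz^k)=\min_u g(u,\bz^{k-1})\le g(u,\bz^{k-1})$ — but this again yields $\le$, not $\ge$. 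So the real content will be to re-examine which of \eqref{eq:three}/\eqref{eq:four} to invoke and at which index; I would carefully track the roles of $u^k$ as the exact minimizer of $g(\cdot,\bz^{k-1})$ from step (ii), feed that into \eqref{eq:four} to kill the residual $g$-terms, and conclude \eqref{eq:h-mono}. Getting the index bookkeeping exactly right, rather than any analytic difficulty, is where the care is needed.
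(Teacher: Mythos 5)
There is a genuine gap here, and you have in fact located it yourself: your chain ends with the requirement $g(u^{k+1},\bz^k)\ge g(u,\bz^{k-1})$ for every $u\in\cu$, which is false (take $u$ with $g(u,\bz^{k-1})$ arbitrarily large), and none of the repairs you sketch closes it. The missing idea is not a different combination of indices --- you have the right one, namely \eqref{eq:three} at step $k$ together with Lemma \ref{lem:four} shifted down by one --- but a different specialization of the free variables. You fix $\bz=\bz^{k-1}$ in \eqref{eq:four}; keep $\bz$ free instead. Then \eqref{eq:four} (with $k+1$ replaced by $k$) gives $g(u,\bz^{k})\le\theta(u,u^{k})+g(u,\bz)$ for \emph{all} $\bz\in\cz$, and combining with \eqref{eq:three} yields
\[
\theta(u,u^{k+1})+g(u^{k+1},\bz^k)\ \le\ g(u,\bz^k)\ \le\ \theta(u,u^{k})+g(u,\bz).
\]
Now choose $(u,\bz)$ to be the global minimizer of $g$ (it exists and is unique by coercivity and strict convexity, as recorded in Theorem \ref{thm:am-conv}). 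Then $g(u,\bz)=g(\cu,\cz)\le g(u^{k+1},\bz^k)$ holds trivially, so the residual term $g(u,\bz)-g(u^{k+1},\bz^k)$ is nonpositive and \eqref{eq:h-mono} follows. This is exactly the paper's argument: the decisive point is the comparison with the global minimum value, which gives the otherwise uncontrollable residual $g$-difference a sign.

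Two further remarks. First, this argument establishes \eqref{eq:h-mono} only for $u$ equal to the $u$-component of the global minimizer, not literally ``for any $u\in\cu$'' as the statement reads; that restricted version is all that is used in the proof of Theorem \ref{thm:am-conv}, and it is also all that the paper's own proof delivers. Second, your structural observation that $g(u,\bz)-g(u,\bz')$ depends on $u$ only through a cross term is correct but is not needed and does not by itself yield the monotonicity of $\theta(u,\cdot)$; likewise the appeal to $u^{k}$ being the exact minimizer of $g(\cdot,\bz^{k-1})$ is a dead end here, since, as you observed, all the inequalities it produces point the wrong way.
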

\begin{proof}
By (\ref{eq:three}) and (\ref{eq:four}), we have
\[
\theta(u,u^{k+1})+g(u^{k+1},\bz^k)\le g(u,\bz^k)\le\theta(u,u^{k})+g(u,\bz)
\]
for all $u\in\cu$, $\bz\in\cz$, and  $k=0,1,\ldots$  .
If $u\in\cu$ and $\bz\in\cz$ are such that $g(u,\bz)=g(\cu,\cz)$, then $g(u^{k+1},\bz^k)\ge g(u,\bz)$. Thus (\ref{eq:h-mono}) follows.
\end{proof}


We are now in a position to prove that the alternating argument of
\cite[Theorems 1--3]{CT84} adapted to our situation yields an
$\varepsilon$-optimal solution  (actually a solution) to the
minimization problem, in the sense that
\begin{equation}\label{e_sol_def}
g(u^k,z^k)=g(\mathcal U,\mathcal Z)+\varepsilon
\end{equation}
where $\varepsilon>0$ can be made arbitrary small, provided $k$ is large enough. We have the following
\begin{theorem}\label{thm:am-conv}
The functional $g(\cdot,\cdot)$ has a global minimum in the space
$H^1(\Omega)\times L^2(\Omega)$ attained at a unique point
$(u,z)$.  Let $\{u^k\}$ and $\{\bz^k\}$ be the sequences generated
by Algorithm \ref{alg}. Then
\BE\label{eq:am-conv} \lim_{k\to\infty}g(u^k,\bz^{k}) = g(\cu,\cz)
\EE Moreover, the alternating sequence converges to the global
minimum of the functional $g$.
\end{theorem}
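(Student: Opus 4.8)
The plan is to combine the convexity of $g$ (which yields existence and uniqueness of the minimizer) with the Csisz\'ar--Tusn\'ady style inequalities already proved in Lemmas \ref{lem:abn}--\ref{lem:h-mono}.

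First I would establish the existence and uniqueness of the global minimizer $(u,z)\in H^1(\Omega)\times L^2(\Omega)$. The functional $g(u,\bz)$ is convex in $(u,\bz)$ (a sum of a quadratic term, a convex $L^1$-type term, and a convex quadratic coupling term), proper, and bounded below by $\inf_u E(u)>0$; moreover it is coercive on $H^1(\Omega)\times L^2(\Omega)$ once one notes that $\frac{\beta}{2}\|\bz-\nabla u\|^2 + \|\bz\|_{L^1}$ controls $\|\nabla u\|_{L^2}$ and $\|\bz\|_{L^2}$ (up to a bounded image), while $\frac{\alpha}{2}\|\ch u - f\|^2$ together with the Poincar\'e--Wirtinger inequality controls $\|u\|_{L^2}$; here one uses that $\ch$ is a convolution whose symbol does not vanish at the origin, so that the nullspace of $\ch$ does not contain the constants trivially, or else one argues on the quotient by constants. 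By standard direct-method arguments (weak lower semicontinuity of $g$ using that the $L^1$ norm of $\bz$ is weakly l.s.c.\ on $L^2$ and convexity of the quadratic terms), a minimizer exists. Uniqueness follows because, although the $L^1$ term is not strictly convex, the term $\frac{\alpha}{2}\|\ch u-f\|^2 + \frac{\beta}{2}\|\bz-\nabla u\|^2$ is strictly convex in $(\nabla u, \bz)$ jointly, hence (again using Poincar\'e) strictly convex in $(u,\bz)$; any two minimizers would have to agree.

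Next I would prove \eqref{eq:am-conv}. Since $\{g(u^k,\bz^k)\}$ is nonincreasing (as already observed in the excerpt) and bounded below by $g(\cu,\cz)$, the limit $L:=\lim_k g(u^k,\bz^k)$ exists and $L\ge g(\cu,\cz)$. To get the reverse inequality, fix a minimizer $(u,\bz)$, i.e.\ $g(u,\bz)=g(\cu,\cz)$. Apply Lemma \ref{lem:five} with this $(u,\bz)$: writing $a^k:=g(u^k,\bz^k)$ and $b^k:=g(u,\bz^k)$ and $c:=g(u,\bz)=g(\cu,\cz)$, inequality \eqref{eq:five} reads $a^k+b^k\le b^{k-1}+c$. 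The sequence $b^k=g(u,\bz^k)$ is bounded below (by $g(\cu,\cz)>-\infty$) so $\limsup_k b^k>-\infty$, and $b^{k_0}<+\infty$ since $u\in\cu$. Lemma \ref{lem:abn} then gives $\liminf_k a^k\le c=g(\cu,\cz)$. Combined with $a^k\downarrow L\ge g(\cu,\cz)$ this forces $L=g(\cu,\cz)$, which is \eqref{eq:am-conv}.

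Finally, for the convergence of the alternating sequence itself to the global minimum, I would use Lemma \ref{lem:h-mono}: for the minimizer $u$, the sequence $\theta(u,u^k)$ is nonincreasing and nonnegative, hence convergent; call its limit $\tau$. On the other hand, from Lemma \ref{lem:three} with this particular $u$, $\theta(u,u^{k+1})\le g(u,\bz^k)-g(u^{k+1},\bz^k)\le g(u,\bz^k)-g(\cu,\cz)$, and one shows $g(u,\bz^k)\to g(\cu,\cz)$ by combining \eqref{eq:four} (which gives $g(u,\bz^{k+1})\le \theta(u,u^{k+1})+g(u,\bz)=\theta(u,u^{k+1})+g(\cu,\cz)$, so $\limsup_k g(u,\bz^k)\le \tau+g(\cu,\cz)$) with the lower bound $g(u,\bz^k)\ge g(\cu,\cz)$; iterating the monotonicity and summing telescopically the inequalities $g(u,\bz^{k+1})-g(u,\bz^k)\le \theta(u,u^{k+1})-(\theta(u,u^{k})$-part) forces $\tau=0$. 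Once $\theta(u,u^k)\to 0$, the definition \eqref{fun:h} gives $\|\ch(u^k-u)\|_{L^2}\to 0$ and $\|\nabla u^k-\nabla u\|_{L^2}\to 0$; the latter plus Poincar\'e--Wirtinger (controlling $u^k-u$ modulo constants) plus the former (pinning the constant through the non-vanishing symbol of $\ch$) yields $u^k\to u$ in $H^1(\Omega)$, and then \eqref{subp1-sol}, i.e.\ the explicit shrinkage formula \eqref{eq:alg1}, shows $\bz^k\to\bz$ in $L^2(\Omega)\times L^2(\Omega)$, since the shrinkage map is (globally Lipschitz) continuous from $\nabla u^k$ to $\bz^k$ and $\nabla u$ to $\bz$ is exactly the optimality condition for the minimizer.

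\textbf{Main obstacle.} The delicate point is squeezing $\tau:=\lim_k\theta(u,u^k)$ to zero: the two-sided estimates from Lemmas \ref{lem:three} and \ref{lem:four} individually only bound $\theta(u,u^{k+1})$ by differences of $g$-values along the $\bz$-sequence, so one must carefully chain them (as in \cite[Theorems 1--3]{CT84}) to see that the "gap" $g(u,\bz^k)-g(\cu,\cz)$ itself tends to zero, and only then deduce $\theta\to 0$. A secondary subtlety is the coercivity/uniqueness argument in the existence step: because $\ch$ is a convolution on a bounded domain with homogeneous Neumann BCs its symbol may not strictly dominate low frequencies, so one has to invoke the Poincar\'e--Wirtinger inequality together with $\int h\neq 0$ to control the constant mode of $u$ — without this, neither coercivity nor strict convexity (hence uniqueness) is immediate.
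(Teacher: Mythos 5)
Your existence/uniqueness argument and your proof of \eqref{eq:am-conv} follow essentially the same route as the paper: the direct method plus strict convexity for the minimizer, and then Lemma \ref{lem:five} evaluated at the minimizer, fed into Lemma \ref{lem:abn} and combined with the monotonicity of $g(u^k,\bz^k)$. (You are in fact more careful than the paper about coercivity and uniqueness in the constant mode of $u$, via Poincar\'e--Wirtinger and $\int h\neq 0$; the paper simply asserts coercivity and strict convexity.)

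The genuine gap is exactly where you place your ``main obstacle'': the argument you sketch for $\tau:=\lim_k\theta(u,u^k)=0$ does not close. Chaining Lemma \ref{lem:three} and Lemma \ref{lem:four} at the minimizer gives, with $b^k:=g(u,\bz^k)$ and $c:=g(\cu,\cz)$, the two-sided bound $b^{k+1}-c\le\theta(u,u^{k+1})\le b^k-c$; hence $b^k$ is nonincreasing and converges to some $b^\infty\ge c$, and all you obtain is $\tau=b^\infty-c$. Telescoping $a^k+b^k\le b^{k-1}+c$ only yields $\sum_k(a^k-c)<\infty$, which concerns $a^k$ (already known to converge to $c$) and says nothing about $b^\infty$; nothing in this chain forces $b^\infty=c$. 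The paper closes the argument by a different mechanism: the minimizing sequence $(u^k,\bz^k)$ is bounded by coercivity in the reflexive space $H^1(\Omega)\times L^2(\Omega)$, so a subsequence converges weakly; by weak lower semicontinuity together with $g(u^{k_j},\bz^{k_j})\to g(\cu,\cz)$ and uniqueness of the minimizer, the subsequential limit must be $(u,\bz)$; only then is the monotonicity of $\theta(u,u^k)$ from Lemma \ref{lem:h-mono} invoked to pass from the subsequence to the whole sequence, since a monotone sequence with a subsequence tending to zero tends to zero. You have all the ingredients for this step (you establish uniqueness and in effect observe that $(u^k,\bz^k)$ is a minimizing sequence), but you do not deploy them in the final step. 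Your last paragraph --- deducing $u^k\to u$ in $H^1(\Omega)$ from $\theta(u,u^k)\to 0$ via Poincar\'e--Wirtinger and the action of $\ch$ on constants, and then $\bz^k\to\bz$ from the nonexpansiveness of the shrinkage map --- is sound, but it is conditional on the unproved claim $\tau=0$. To repair your route without subsequences you would need an additional ingredient, e.g.\ the strong convexity of the quadratic part of $g$ applied to the minimizing sequence, rather than further iteration of Lemmas \ref{lem:three} and \ref{lem:four}.
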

\begin{proof}
Observe that the functional $g$ is coercive, namely
$$g(u,\bz)\to \infty,\quad \text{ as }\quad  \|(u,\bz)\|:=\|u\|_{L^2}+\|\nabla u\|_{L^2}+\|\bz\|_{L^2}\to\infty$$
and weakly lower semi-continuous. Since $\cu\times\cz$ is a
reflexive Banach space, the global minimum of $g$ is attained and it is attained
at a unique point $(u,\bz)$ thanks to the fact that the functional
$g$ is strictly convex, see \cite{ET99}. Let $(u^k,\bz^k)$ be the alternating
sequence and let
\[
a^k=g(u^k,\bz^k),\quad b^k=g(u,\bz^k),\quad c=g(u,\bz)=g(\cu,\cz).
\]
By Lemma \ref{lem:five}, we have
\[
a^k+b^k\le b^{k-1}+c,\quad k=1,2,\ldots.
\]
By assumption, $0\le b^k<+\infty$ for all $k$. Thus, by using Lemma \ref{lem:abn}, we obtain
\BE\label{eq:g-liminf}
\liminf_{k\to\infty}g(u^k,\bz^k)\le g(\cu,\cz).
\EE
Moreover, we also have
\BE\label{eq:g-mono}
g(\cu,\cz) \le g(u^{k},\bz^{k})\le g(u^{k-1},\bz^{k-1}),\quad k=1,2,\ldots.
\EE
That is, $\{g(u^{k},\bz^{k})\}$ is non-increasing. By combining (\ref{eq:g-liminf}) and (\ref{eq:g-mono}),  we plainly deduce the
limit relation in  (\ref{eq:am-conv}).

Since $H^1(\Omega)\times L^2(\Omega)$ is reflexive, there exists a
subsequence $(u^{k_j},\bz^{k_j})$ which converges to the global
minimum.  Actually, we now show that the whole sequence
$(u^k,\bz^k)$ does converge. Indeed, by Lemma \ref{lem:h-mono},
$\{\theta(u,u^k)\}$ is monotone non-increasing with respect to
$k$, where $\theta$ is the metric defined in (\ref{fun:h}). Hence,
$\{\theta(u,u^k)\}$ converges as $k\to\infty$ and necessarily to
zero as one has  $\{u^{k_j}\}\to u$. Therefore, we have $u^k\to
u^*$ as well as as $\bz^k\to \bz$, as $k\to\infty$.
\end{proof}

\section{Discrete alternating minimization algorithms}\label{sec3}
In this section we provide two discrete versions of Algorithm \ref{alg}:
the first one is based on the imposition of proper BCs
whereas the second one relies on the enlargement of the domain technique.

In order to approximate equations (\ref{eq:alg1}) and  (\ref{eq:alg2}),
for the sake of simplicity, we assume that the
domain $\Omega$ is square  (the case of a rectangular domain is the same).
Let $\Omega_n$ be a $n \times n$ uniform grid on $\Omega$,
let $\v{u_n}\in\R^{n^2}$ denote the stack ordered unknown
image to be restored, which is  the collocation of the function $u$ at $\Omega_n$,
and let $\v{f_n}\in\R^{n^2}$ represent an observed grayscale image
which is  the stack ordered collocation of the function $f$ at $\Omega_n$.

\subsection{Alternating minimization algorithm by BCs}\label{sub:bc}
We impose the same BC on $u$ both on
integral and differential operators obtaining $n^2 \times n^2$
discrete operators.
Classical choices are:
\begin{itemize}
	\item {\em zero-Dirichlet}: $u(x)=0$, for $x\in \R^2\setminus\Omega$;
	\item {\em periodic}: $u$ periodically
extended outside $\Omega$;
	\item {\em reflective}: discretize $\partial u(x)=0$, for $x \in \partial \Omega$ (Neumann BCs), by symmetry
with respect to the midpoint;
	\item {\em antireflective}: discretize $\partial u(x)^{-}=\partial u(x)^{+}$, for $x \in \partial \Omega$,
with respect to the midpoint.
\end{itemize}
Reflective and antireflective BCs have been introduced in \cite{NCT99,S03}, respectively, in a more general setting
where $u(x)$ is not necessarily differentiable (neither continuous). Further details on their implementation can be found in \cite{HNO05,DS10}.

First, we discretize $\nabla$ to approximate equation \eqref{eq:alg1}.
Let $D_1, \, D_2\in\Rntnt$ be the two
first-order forward finite difference operators with appropriate
BCs in the $x$ and $y$ directions, respectivelly.  We use $\v{z_1},\v{z_2}\in\Rnt$ as the approximations to
$D_1\v{u_n}$ and $D_2\v{u_n}$, respectively. Define
$\bz=(\v{z_1}; \, \v{z_2})\in\Rtnt$ and
$D=(D_1; \, D_2)\in\R^{2n^2\times n^2}$. Also, let
$\bz_i=((\v{z_1})_i; \, (\v{z_2})_i)\in\R^2$ and
$D_i \v{u_n}=((D_1\v{u_n})_i; \, (D_2\v{u_n})_i)\in\R^2$, for
$i=1,\ldots,n^2$. Therefore, the discrete
approximation of  equation (\ref{eq:alg1}) is given \nolinebreak by
\BE\label{subp1-sol-d} \bz^k_i = \max\left\{|D_i
\v{u_n}^k|-\frac{1}{\beta}, \, 0\right\}\frac{D_i \v{u_n}^k}{|D_i \v{u_n}^k|},\qquad
i=1,\ldots,n^2, \EE where $0\cdot(0/0)$ is set to be $0$.

Now, we discretize the equation  \eqref{eq:alg1}.
The integral equation $\ch u$ is approximated by a collocation method: we obtain
$H\in\Rntnt$ as the discretization matrix of
the blurring (or convolution) operator $\ch$ with appropriate BCs.
Similarly, thanks to equation \eqref{eq:Hstar}, we define $H'\in\Rntnt$ as the discretization matrix of
the correlation operator $\ch^*$ with the same BCs.
Note that $H'=H^T$ for zero-Dirichlet and periodic BCs, while $H'\neq H^T$ for antireflective BCs \cite{DEMS06}. In the case of reflective BCs $H'=H^T$ if the PSF is quadrantally symmetric and $H'\neq H^T$ otherwise.
Similarly, thanks to \eqref{eq:Nstar}, the discretization of $\nabla^*$ leads to
$D_1', \, D_2'\in\Rntnt$ and
$D'=(D_1', \, D_2')\in\R^{n^2\times 2n^2}$.
Therefore,  the discrete
approximation of  equation (\ref{eq:alg2}) is given by
\BE\label{subp2-op-d}
\left(H'H - \frac{\beta}{\alpha} D'D\right) \v{u_n}^{k+1} = H'\v{f_n} - \frac{\beta}{\alpha} D'\bz^k.
\EE

The idea of using the operator $H'$ instead of $H^T$ was introduced in \cite{DEMS06}, where it was called {\em reblurring}.
This strategy turns out to be
computationally very useful in the case of antireflective BCs and quadrantally symmetric PSFs. Indeed,
in such a case
$H'H$ can be diagonalized by the antireflective transform \cite{ADNS11},
$H^TH$ has  a structure not easily invertible \cite{ADS08}. Therefore, the discrete alternating minimization algorithm defined by  \eqref{subp1-sol-d} and \eqref{subp2-op-d} coincide with the one proposed in \cite{WYYZ08}  in the case of periodic BCs, however the two algorithms are different in the case of antireflective BCs.

\subsection{Enlargement of the domain}
Exploiting suitable BCs
allows to derive square discrete operators, which are particularly useful to obtain a fast solution of the linear system \eqref{subp2-op-d},
defining $u(x)$ for $x\in\R^2\setminus\Omega$ as a linear combination of values of $u$ inside $\Omega$. In this subsection we use a different strategy for computing the undefined values of $u(x)$ for $x\in\R^2\setminus\Omega$. We expand the domain $\Omega$ to a new domain $\widetilde\Omega$ such that $\Omega \subset \widetilde\Omega$ and where $\widetilde\Omega$ is large enough to contain all points necessary to define $u(x)$ for $x\in\Omega$. Therefore, the size of $\widetilde\Omega$ depends on the size of the support of the kernel function $h(x)$ in \eqref{nbf}, i.e., the PSF,
and in the worst case it is twice the size of $\Omega$, since $h(x)$ has compact support.

\noindent First, we define a projection operator $\mathcal{P}_{\Omega}^{\widetilde\Omega}: \Omega \to \widetilde\Omega$, such that $\mathcal{P}_{\Omega}^{\widetilde\Omega}f(x)=f(x)$ for all $x \in \Omega$. For $x\in\widetilde\Omega\setminus\Omega$ the value of $\mathcal{P}_{\Omega}^{\widetilde\Omega}f(x)$ can be defined following the same choices used for the BCs. It can be fixed equal to zero or obtained as a periodic
extension (reflection or antireflection)
of $f(x)$ with $x\in\Omega$. Of course, other
extrapolation strategies, like the one proposed
for the synthetic BCs \cite {FN11}, could be used
provided the computational cost does not exceed the one of the FFT.

\noindent Once that $\widetilde\Omega$ and $\mathcal{P}_{\Omega}^{\widetilde\Omega}$ are defined, the minimization problem \eqref{tv} can be reformulated on  $\widetilde\Omega$ instead of $\Omega$ and than discretized with appropriate BCs like in Subsection~\ref{sub:bc}. Eventually, the restored image is the inner part of the approximated solution at the grid points
inside $\Omega$.
Since the
use of accurate BCs is useful to remove ringing effects (Gibbs phenomena),
which reduce moving towards the interior of $\widetilde\Omega$,
and the boundary of $\Omega$ is far away enough from the boundary of $\widetilde\Omega$ (depending on the support of the PSF), accurate BCs are not longer necessary on $\widetilde\Omega$ and a computationally cheap choice can be adopted: in fact, precise BCs could be important only if the discrete Toeplitz operators
associated
to the PSF are heavily ill-conditioned in the high
frequency
domain which in turn is
is due to the presence of zeros
of high order close to the boundaries of $(-\pi,\pi]^2$ of the symbol induced by the PSF. Hence, periodic BCs are usually applied on $\widetilde\Omega$ such that all computations can be performed by FFT also when the PSF is not symmetric. It follows that the matrices involved in equations (\ref{subp2-op-d})  are
diagonalizable
by FFT and $H'=H^T$ as observed at the end of Subsection \ref{sub:bc}.

\section{Numerical results}\label{sec4}
In this section, we report the performances of the considered discrete versions of our alternating
minimization Algorithm \ref{alg} described in Section \ref{sec3}.
According to the analysis in \cite{DS10b}, the two discrete algorithms provide the same restoration when the PFS is quadrantally
symmetric, but they have a different computational cost. Indeed, the second strategy based on the enlargement of the domain is more expensive, since the discretization of $\widetilde\Omega$ leads to a larger linear
system
and thus the FFTs (or the discrete cosine transforms) are applied to larger vectors. On the other hand, when the PSF is not quadrantally symmetric, the linear system in \eqref{subp2-op-d} can not be diagonalized by fast transforms in the case of accurate
BCs
(reflective or antireflective) and it should be solved by preconditioned conjugate gradient with proper preconditioners \cite{CCW99,BDS11}. The high order BCs proposed in \cite{D10} could solve the linear system \eqref{subp2-op-d} by FFTs plus some lower order computations, but they are purely algebraic and can not be interpreted as the discretization of a continuous problem, and hence they do not fit with the framework proposed in this paper. Differently, the enlargement of the domain does not depend on the symmetry of the PSF and the larger linear systems can be solved again by FFTs. Therefore, we suggest to use the BCs approach, when the PSF is quadrantally symmetric and by the enlargement of the domain, when the PSF is not quadrantally symmetric. We provide two examples of these different situations. Moreover, we compare the different kind of BCs or
extensions,
namely periodic, reflective and antireflective, obtaining, according to results in the literature (cf. \cite{DS10}), that the antireflective strategy provides better restorations and is more stable, when varying the
fidelity parameter $\alpha$.

Accordingly
to the reformulation \eqref{tv-3} of the TV minimization problem \eqref{tv}, the parameter $\beta$ has to be chosen large enough. On the other hand, when $\beta$ is large our alternating minimization algorithm converges slowly. Thanks to the huge numerical experimentation in \cite{WYYZ08}, whose proposal corresponds to our algorithm in the case of periodic BCs, we can safely choose $\beta=2^7$ and we can implement a continuation strategy on $\beta$. This means that we add an outer iteration on $\beta = 2^j$, for $j=1,\dots,7$, using as initial guess $\v{u}_\v{n}^0$ the restoration computed by the previous value of $j$, for $j=1$ the initial guess is the observed image $\v{f}$. In this way, we have a much faster convergence, because for small $j$ the alternating minimization algorithm converges quickly even if the restoration is not enough accurate, while for large $j$ we obtain an accurate restoration speeding up the convergence thanks to the good initial guess.

Our test examples are constructed from the true image and the PSF, computing a blurred image by convolution. Hence we cut the inner part of the blurred image such that the pixels of the new smaller observed image are not affected by the BCs of the convolution, but only by the other pixels of the true image. Of course, the number of pixels cut at the boundary depends on the support of the PSF. The
tailoring is marked in the true image by a white rectangle in Figures \ref{fig:im-tb} and \ref{fig:im-tb2}. Note also the different size of the true (larger) image and the blurred image to restore. Then, we add the Gaussian noise to the blurred image, where the Gaussian noise has zero mean and different variances: $\sigma^2=10^{-6}$, $\sigma^2=10^{-4}$,  or $\sigma^2=5\times 10^{-4}$.  As in \cite{WYYZ08}, the quality of restoration is measured by the signal-to-noise ratio (SNR):
\[
{\rm SNR}:=10\ast\log_{10}\frac{\|\v{u_n}-\bar{\v{u}}_\v{n}\|^2}{\|\v{u_n}-\hat{\v{u}}_\v{n}\|^2},
\]
where $\v{u_n}$ is the original image, $\bar{\v{u}}_\v{n}$ is the image with the mean value of $\v{u_n}$ as pixels, and $\hat{\v{u}}_\v{n}$ is the restored image.

The choice of the
fidelity parameter $\alpha$ is a difficult task and it should be further investigated. Here, we solve the minimization problem for several values of $\alpha$, selecting by hand the parameter that provides the maximum SNR. We consider also the choice proposed in \cite{WYYZ08} that is $\alpha = 0.05/\sigma^2$, but the following numerical results show that this is not always a good choice since $\alpha$ depends both on the noise, i.e., $\sigma^2$, and on the PSF.

All the tests are carried out by using {\tt MATLAB 7.12} on a personal computer Intel(R) Core(TM)2 Duo of 1.80 GHz CPU.

\begin{figure}[tb]
\begin{center}
    \begin{center}
    \begin{minipage}[c]{5.0cm}
        \centering
        \epsfig{figure=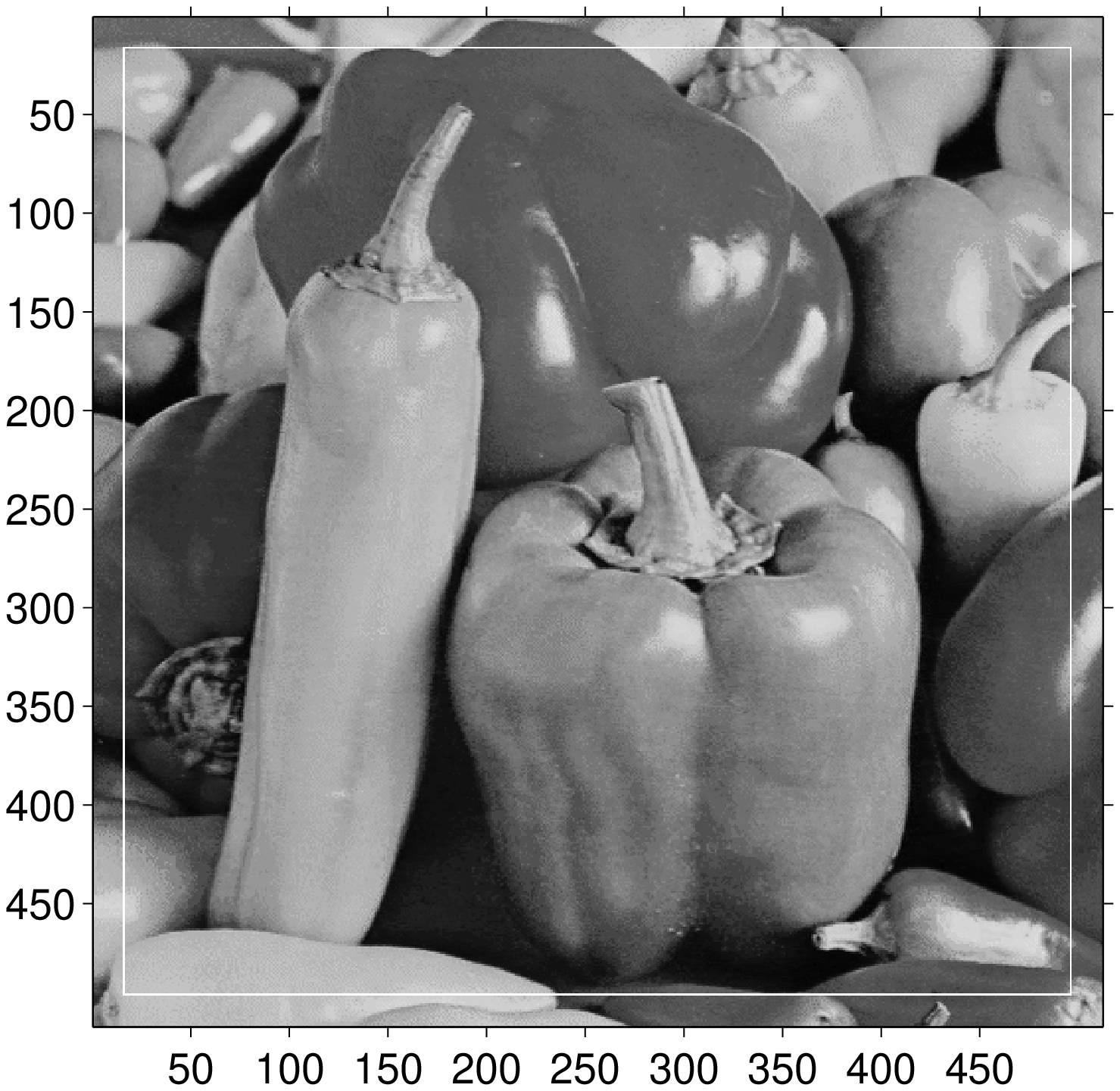,width=5.0cm}
        \small{True image}
    \end{minipage}
    \begin{minipage}[c]{5.0cm}
        \centering
        \epsfig{figure=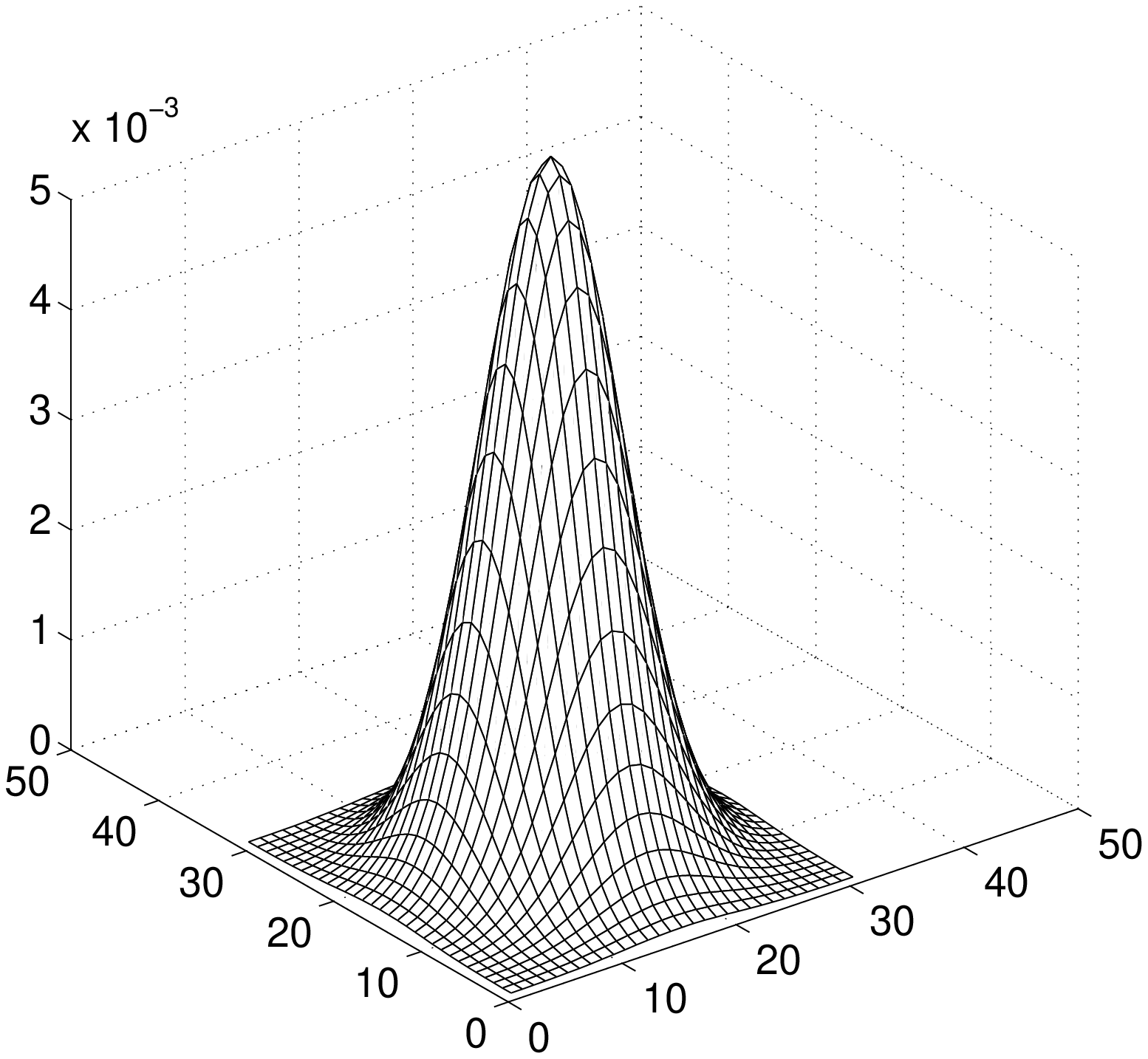,width=5.0cm}
        \small{PSF with ${\tt hsize}=16$ and $\delta=5$}
   \end{minipage}
    \begin{minipage}[c]{5.0cm}
        \centering
        \epsfig{figure=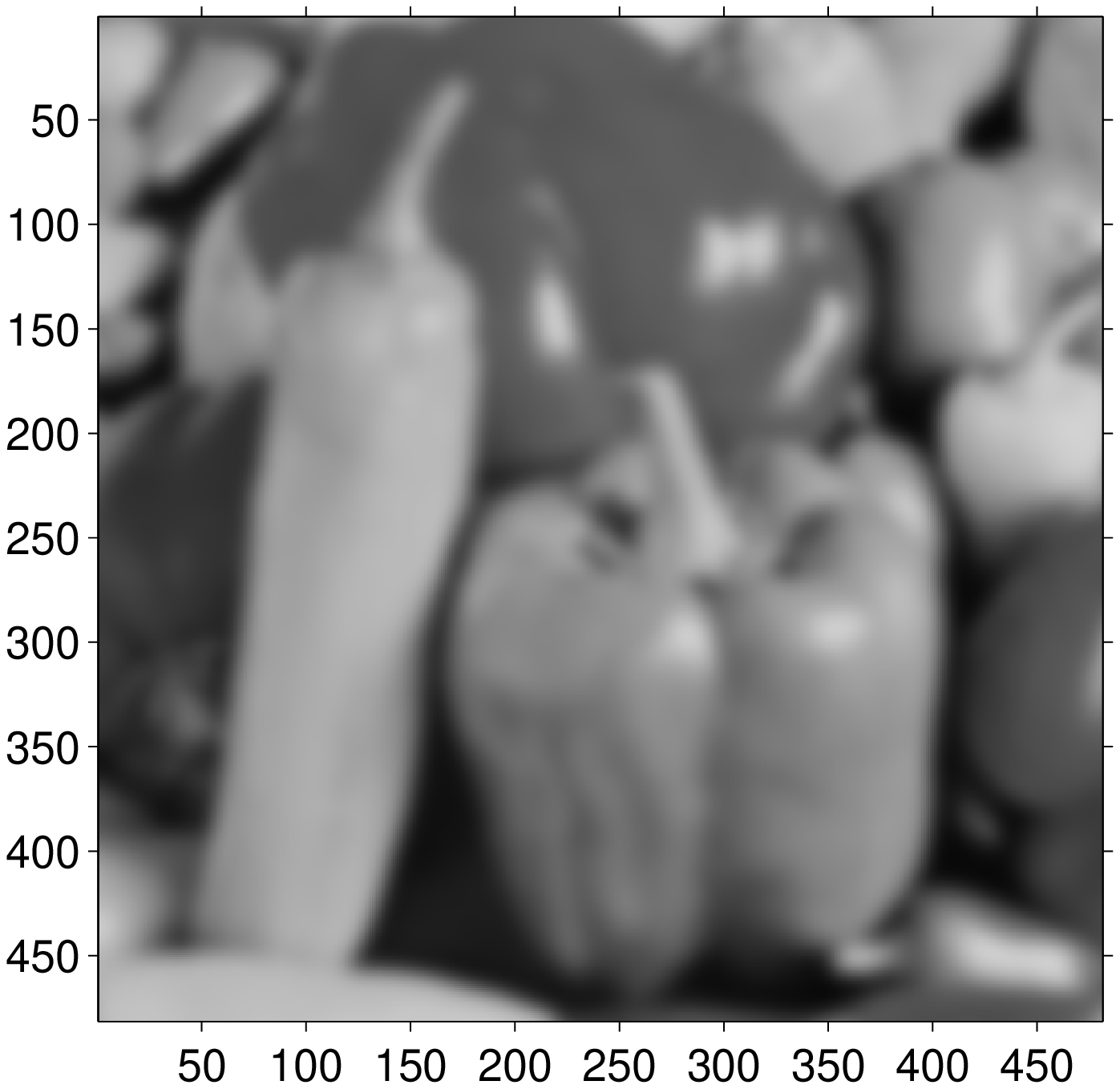,width=5.0cm}
        \small{Blurred image}
    \end{minipage}\\
    \begin{minipage}[c]{5.0cm}
        \centering
        \epsfig{figure=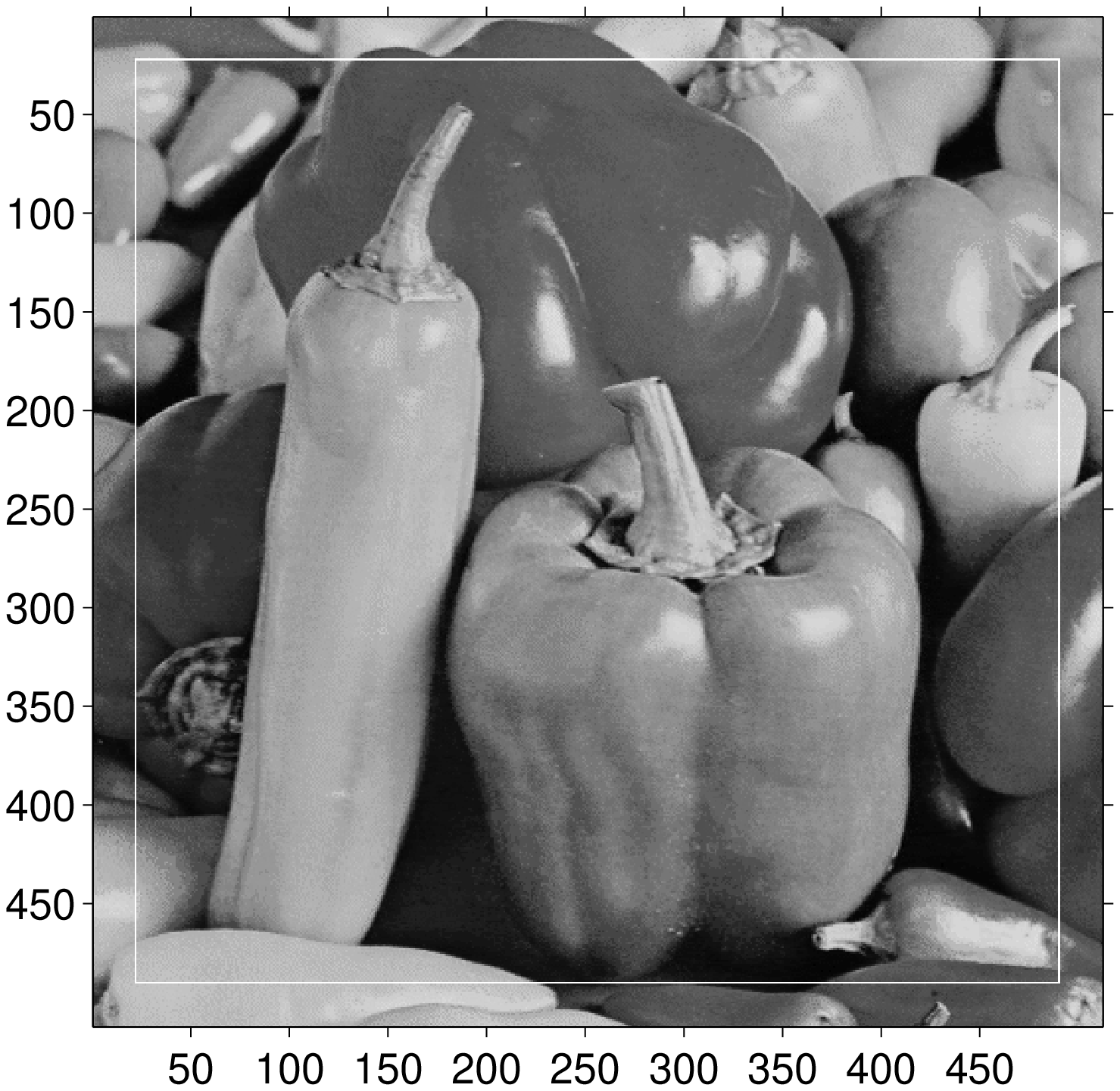,width=5.0cm}
        \small{True image}
    \end{minipage}
    \begin{minipage}[c]{5.0cm}
        \centering
        \epsfig{figure=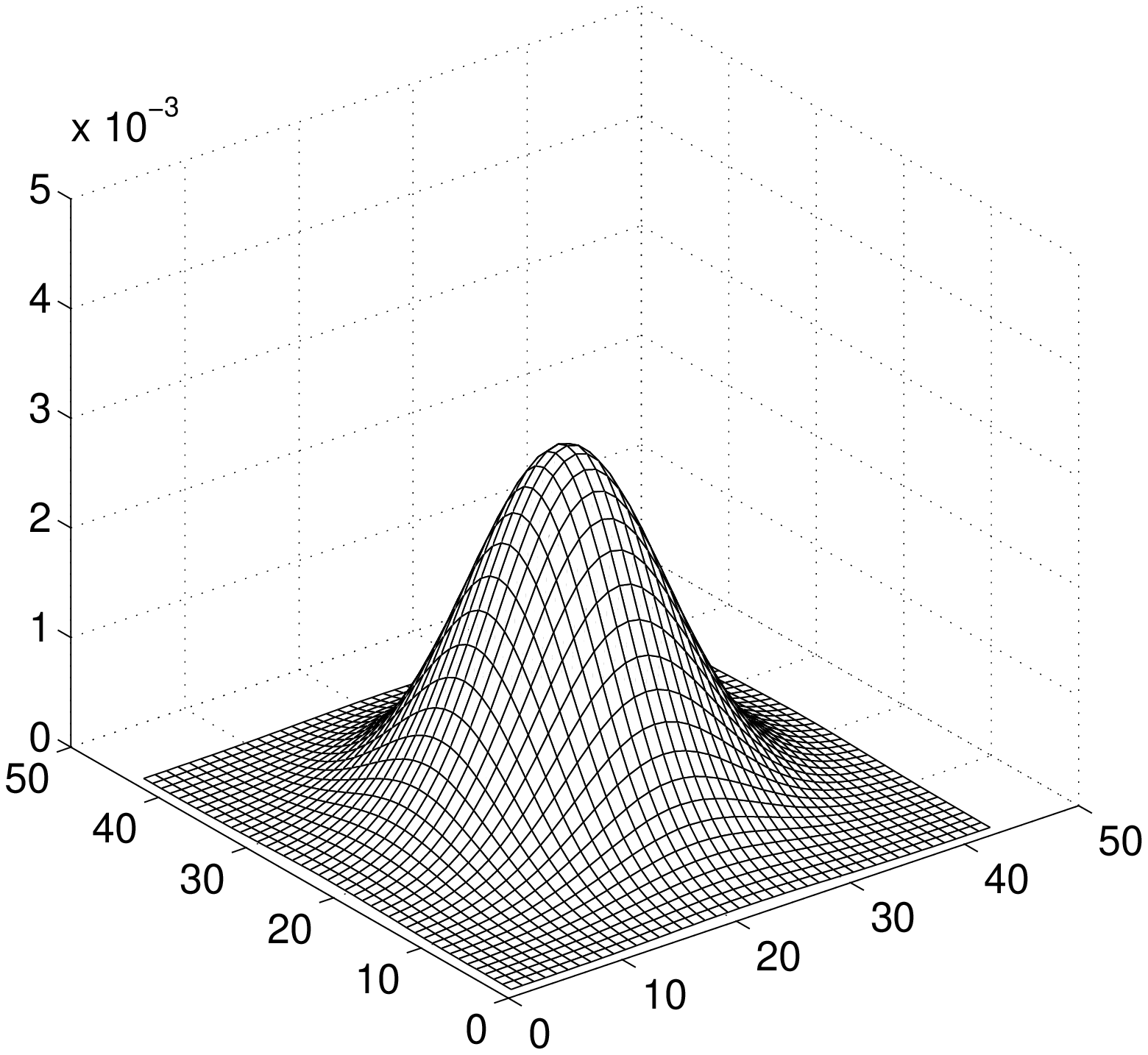,width=5.0cm}
        \small{PSF with ${\tt hsize}=22$ and $\delta=7$}
   \end{minipage}
    \begin{minipage}[c]{5.0cm}
        \centering
        \epsfig{figure=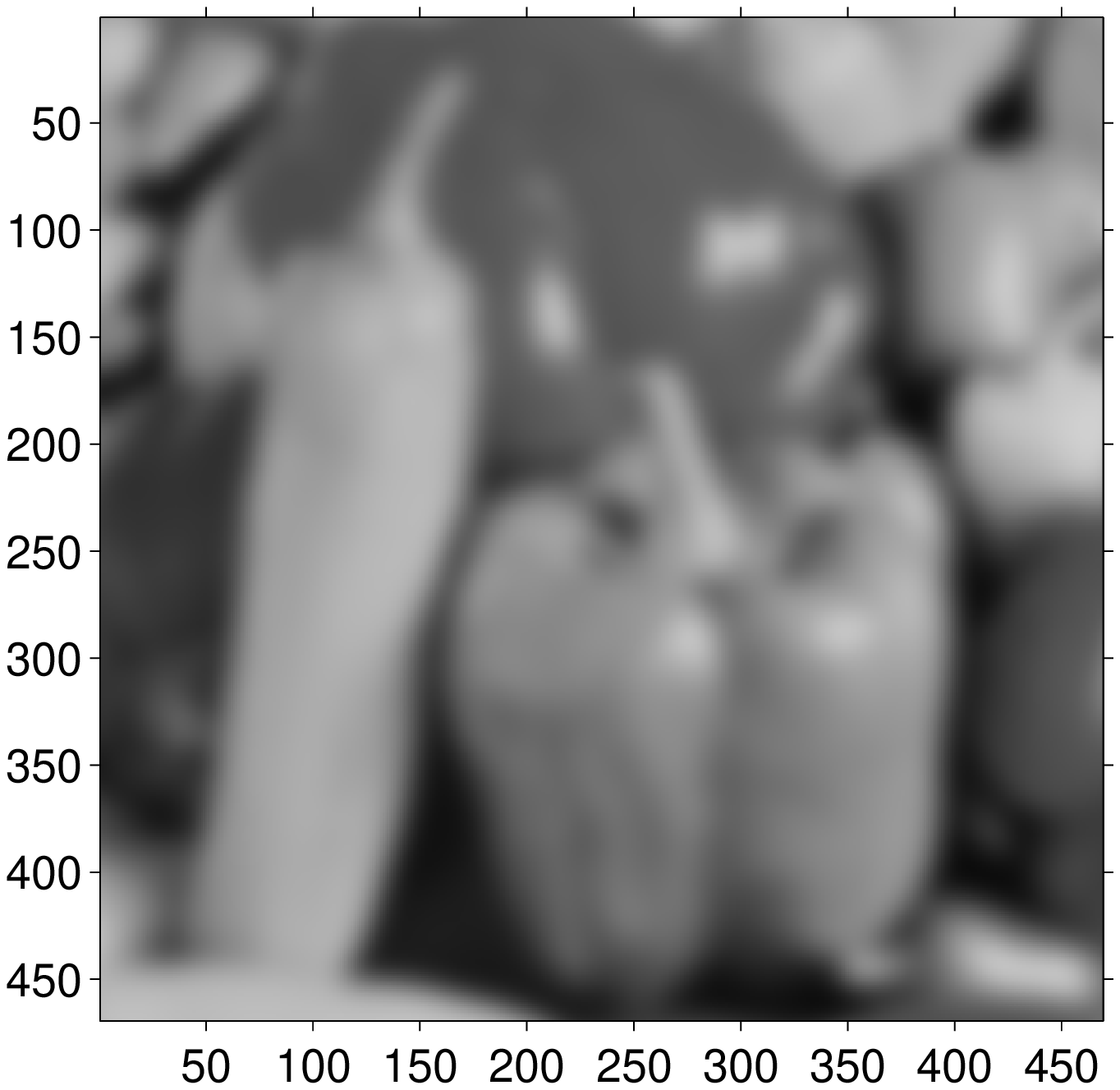,width=5.0cm}
        \small{Blurred image}
    \end{minipage}
    \end{center}
\end{center}
\caption{True images, PSF, and blurred images without noise.} \label{fig:im-tb}
\end{figure}

\subsection{Quadrantally symmetric PSF}
In this example we consider a symmetric Gaussian PSF with different blurring sizes and standard deviations: a) ${\tt hsize}=16$ and $\delta=5$ and b) ${\tt hsize}=22$ and $\delta=7$.
Figure \ref{fig:im-tb} shows the true image, the PSFs with different blurring sizes and standard deviations, and the blurred images without noise. The white box denotes the field of view.

Figure \ref{fig:snr-c} shows the SNR versus the regularization parameter $\alpha$ for different BCs and different noise levels. We note that, as expected, antireflective BCs provides the higher SNR for all the considered values of $\alpha$, in particular around the value of $\alpha$ that gives the maximum SNR. Since the restoration error has a component related to the boundary artifacts and
another component associated to the noise, the use of accurate BCs, like antireflective, is especially useful for a low level of the noise. Figure \ref{fig:snr-c} shows also that the value of $\alpha=0.05/\sigma^2$ proposed in \cite{WYYZ08} is a good choice only for the antireflective BCs with low noise ($\sigma^2=10^{-6}$).

\begin{figure}[tb]
\begin{center}
    \begin{center}
    \begin{minipage}[c]{5.0cm}
        \centering
        \epsfig{figure=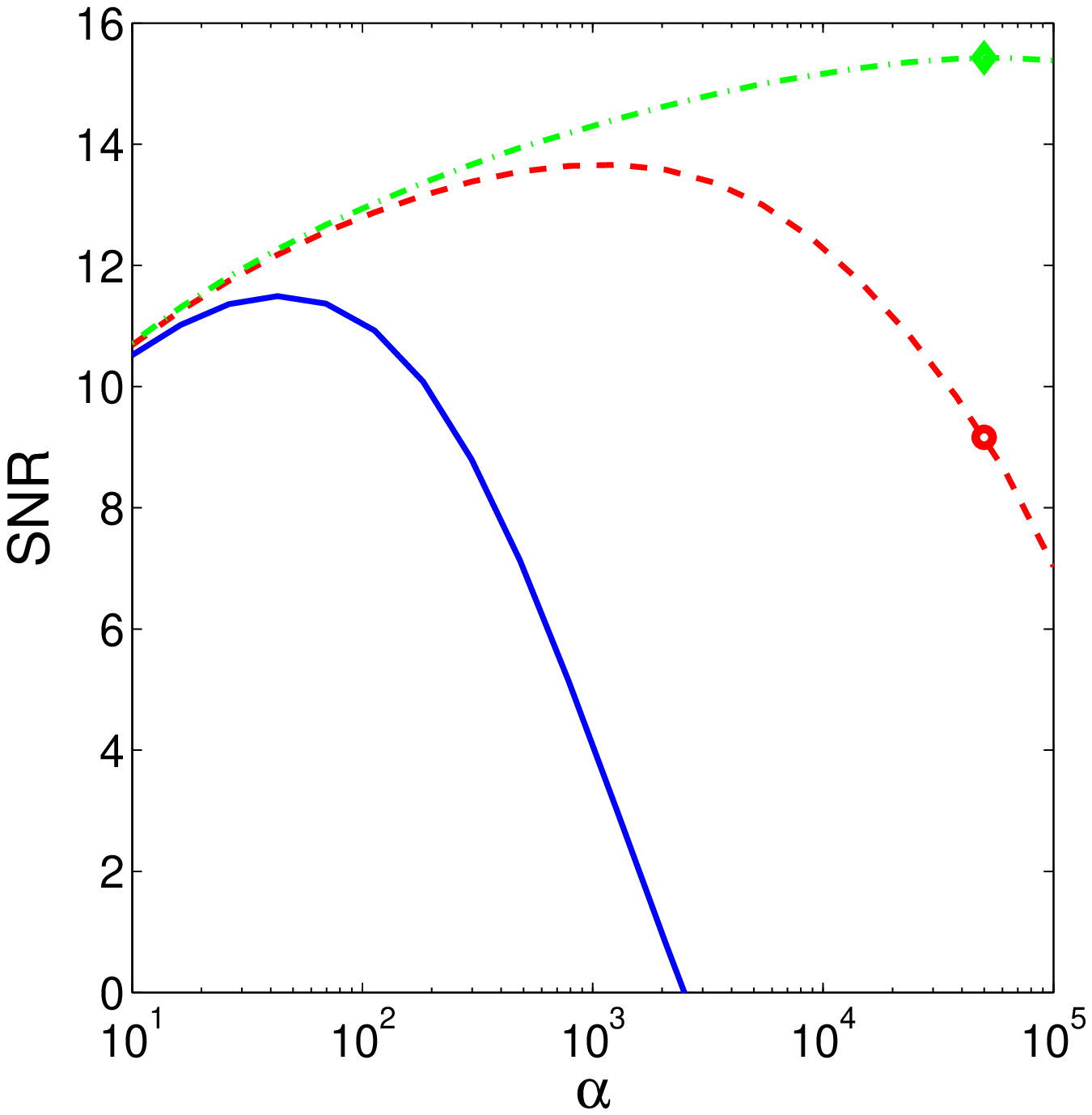,width=5.0cm}
        \small{$\sigma^2=10^{-6}$}
    \end{minipage}
    \begin{minipage}[c]{5.0cm}
        \centering
        \epsfig{figure=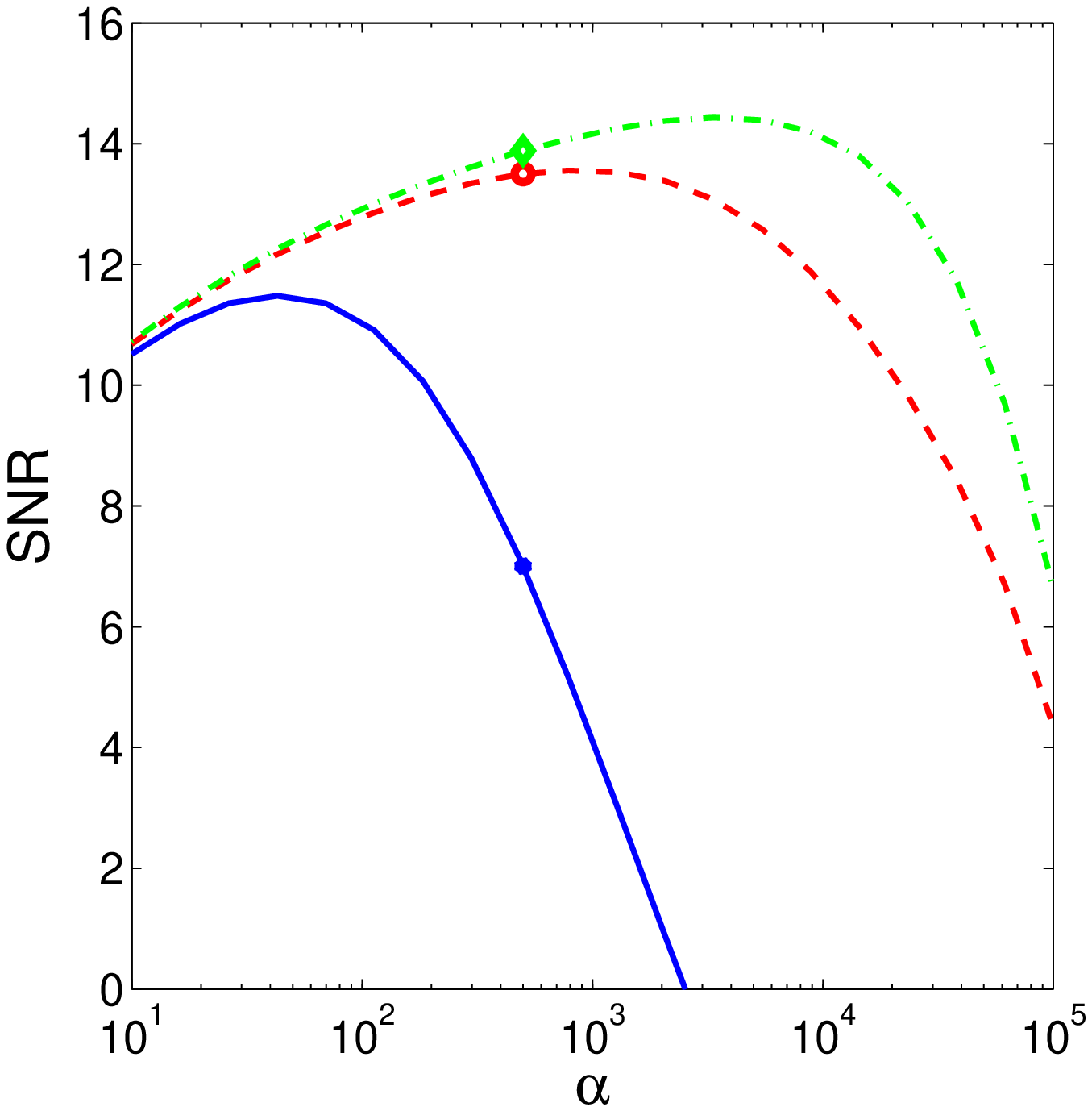,width=5.0cm}
        \small{$\sigma^2=10^{-4}$}
   \end{minipage}
    \begin{minipage}[c]{5.0cm}
        \centering
        \epsfig{figure=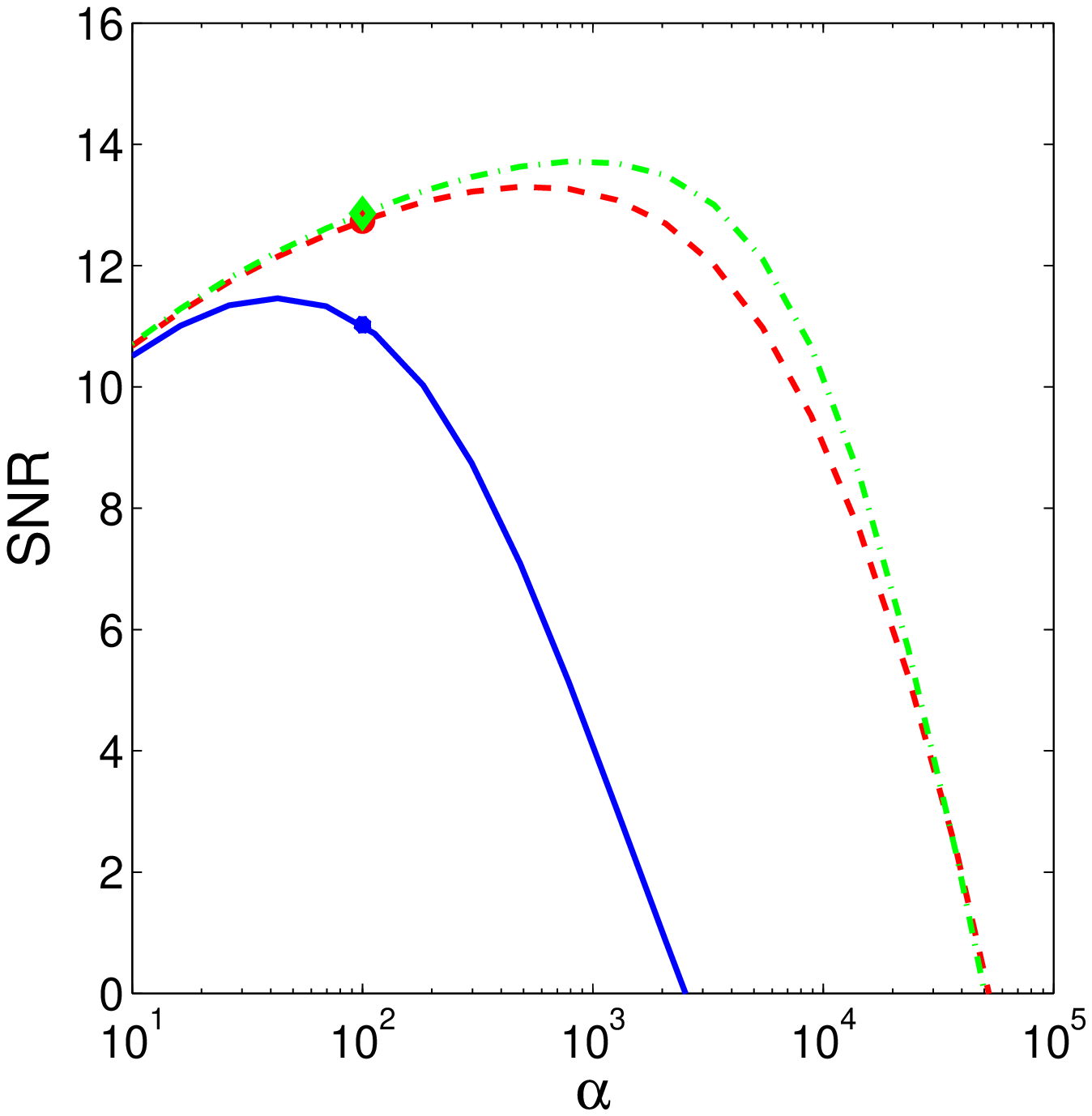,width=5.0cm}
        \small{$\sigma^2=5\times 10^{-4}$}
    \end{minipage}\\ \vspace{0.2cm}
    \begin{minipage}[c]{5.0cm}
        \centering
        \epsfig{figure=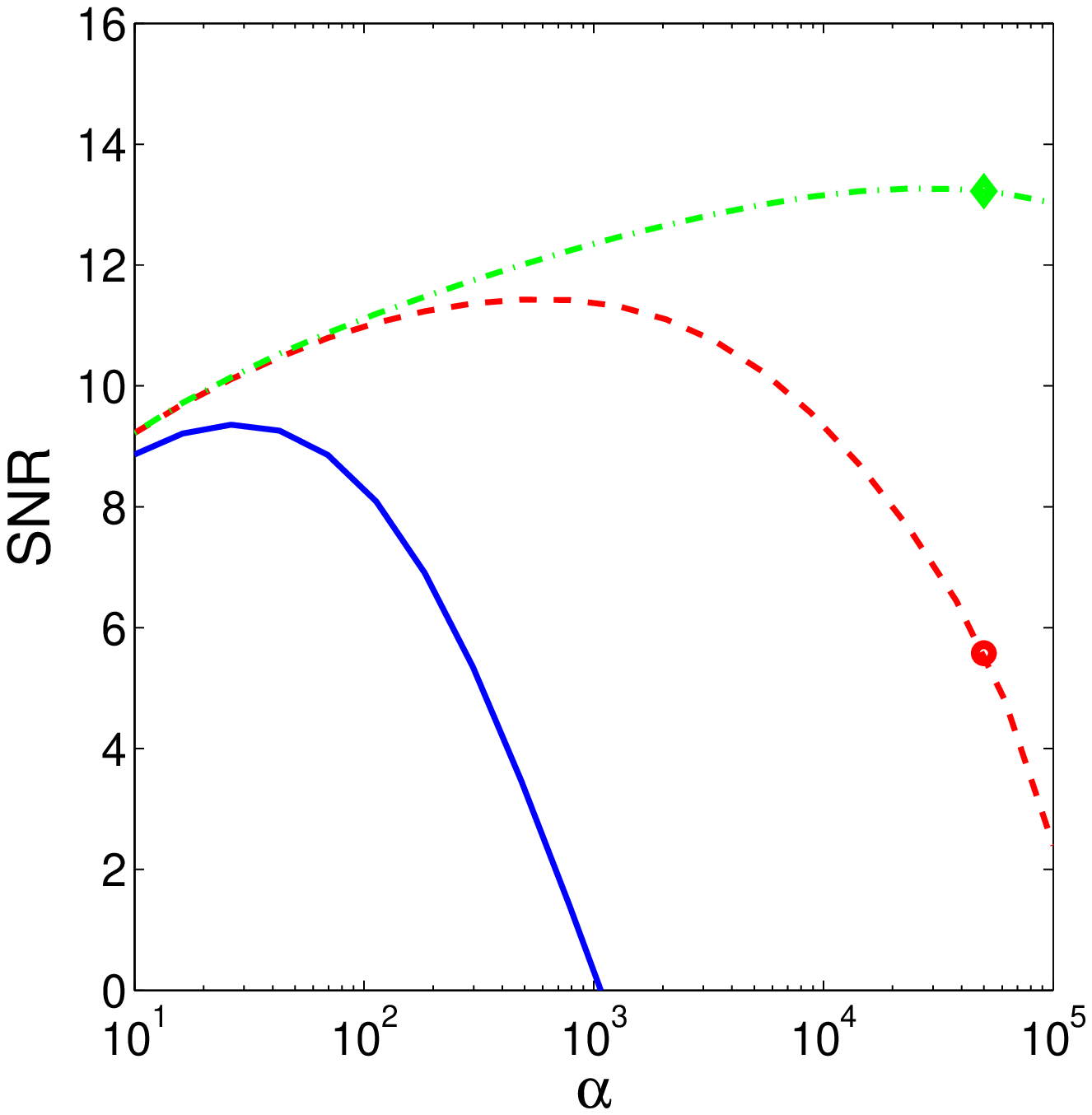,width=5.0cm}
        \small{$\sigma^2=10^{-6}$}
    \end{minipage}
    \begin{minipage}[c]{5.0cm}
        \centering
        \epsfig{figure=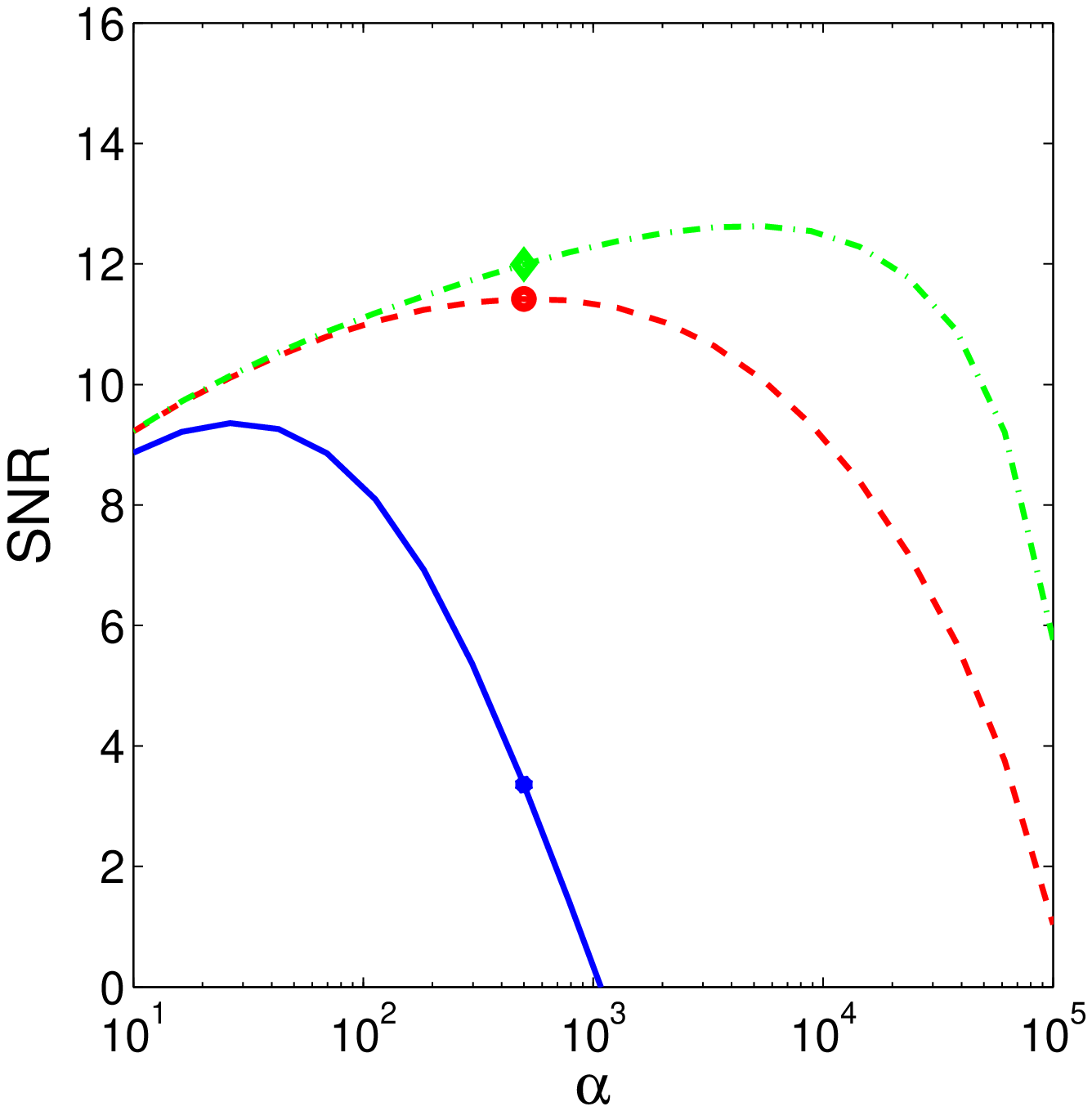,width=5.0cm}
        \small{$\sigma^2=10^{-4}$}
   \end{minipage}
    \begin{minipage}[c]{5.0cm}
        \centering
        \epsfig{figure=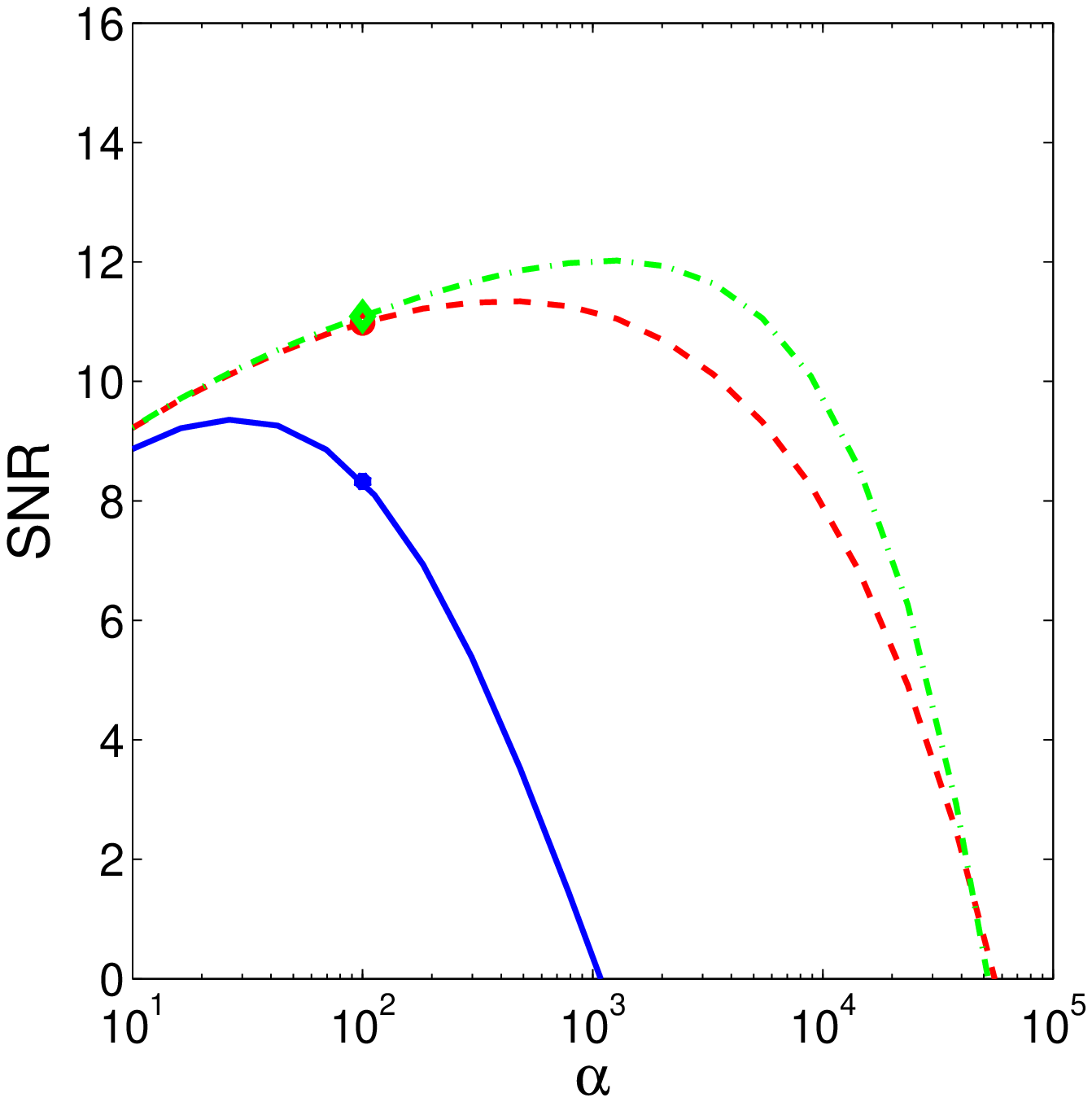,width=5.0cm}
        \small{$\sigma^2=5\times 10^{-4}$}
    \end{minipage}
    \end{center}
\end{center}
\caption{The SNR versus the regularization parameter $\alpha$ for different BCs and different vaues of the noise. The first row contains  the results for PSF with ${\tt hsize}=16$ and $\delta=5$ (the solid, dashed, and  dashdot lines denote Periodic, Reflective and Anti-reflective BCs, respectively; the star, circle, and diamond are related to $\alpha=0.05/\sigma^2$); the second row contains the results for PSF with ${\tt hsize}=22$ and $\delta=7$.} \label{fig:snr-c}
\end{figure}

Figure~\ref{fig:im-rs1} shows the computed solutions for the highest SNR varying $\alpha$, for the PSF a) (${\tt hsize}=16$ and $\delta=5$) at the noise level $\sigma^2=10^{-6}$.
The antireflective BCs provide the best restoration even if a slightly grater CPU time is required. Since increasing the accuracy of the boundary model a lower regularization is required, antireflective BCs reach the maximum SNR for a larger $\alpha$ with respect to the other BCs.  Comparing reflective and antireflective BCs at the same value of $\alpha$, also with a higher noise level ($\sigma^2=10^{-4}$), we note that not only antireflective BCs computes a better restoration, but they are also substantially more robust when varying $\alpha$, as shown in  Figure~\ref{fig:im-rs4}.

\begin{figure}
\begin{center}
    \begin{center}
    \begin{minipage}[c]{5.0cm}
        \centering
        \epsfig{figure=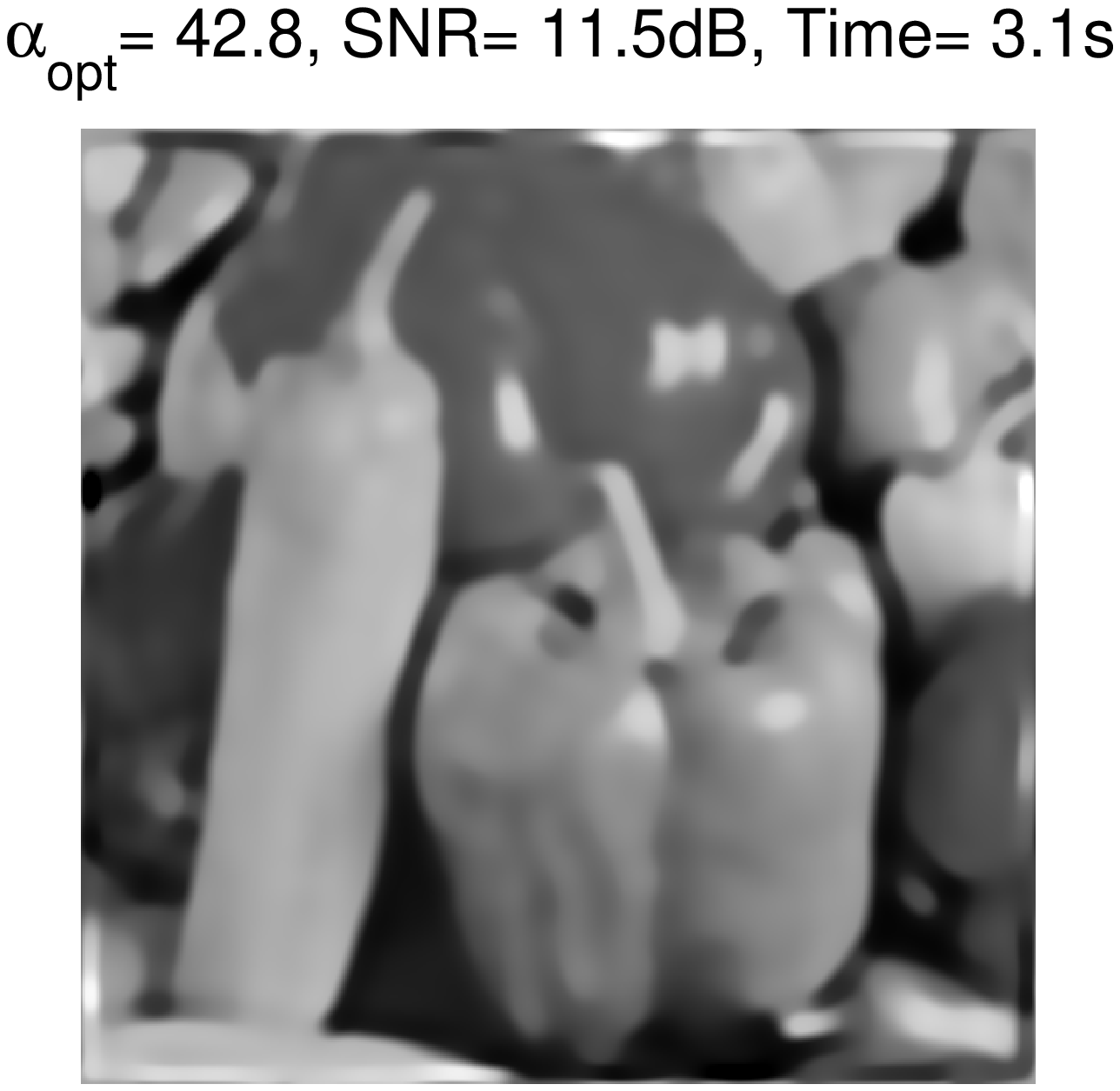,width=5.0cm}
        \small{Periodic}
    \end{minipage}
    \begin{minipage}[c]{5.0cm}
        \centering
        \epsfig{figure=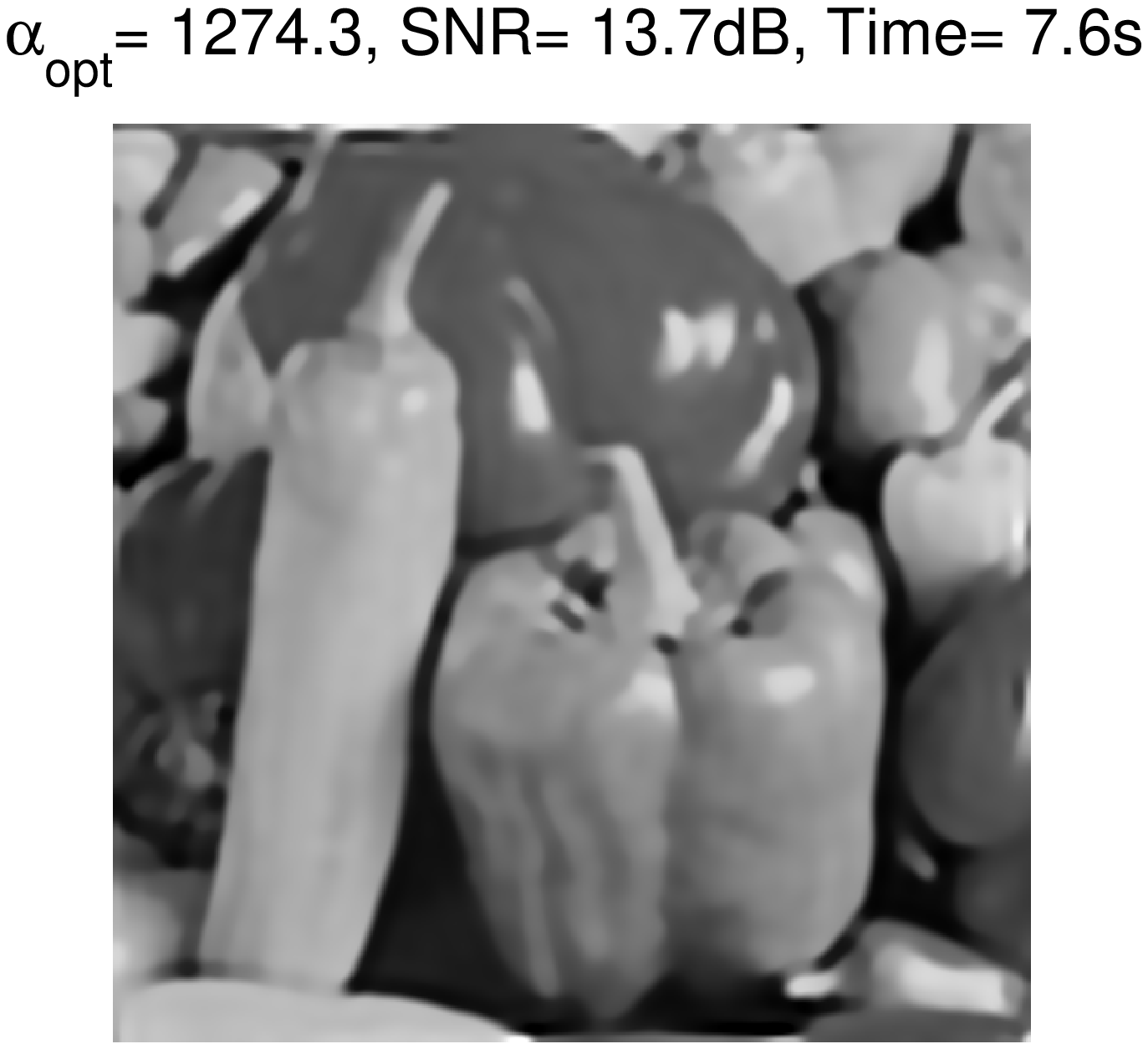,width=5.0cm}
        \small{Reflective}
   \end{minipage}
    \begin{minipage}[c]{5.0cm}
        \centering
        \epsfig{figure=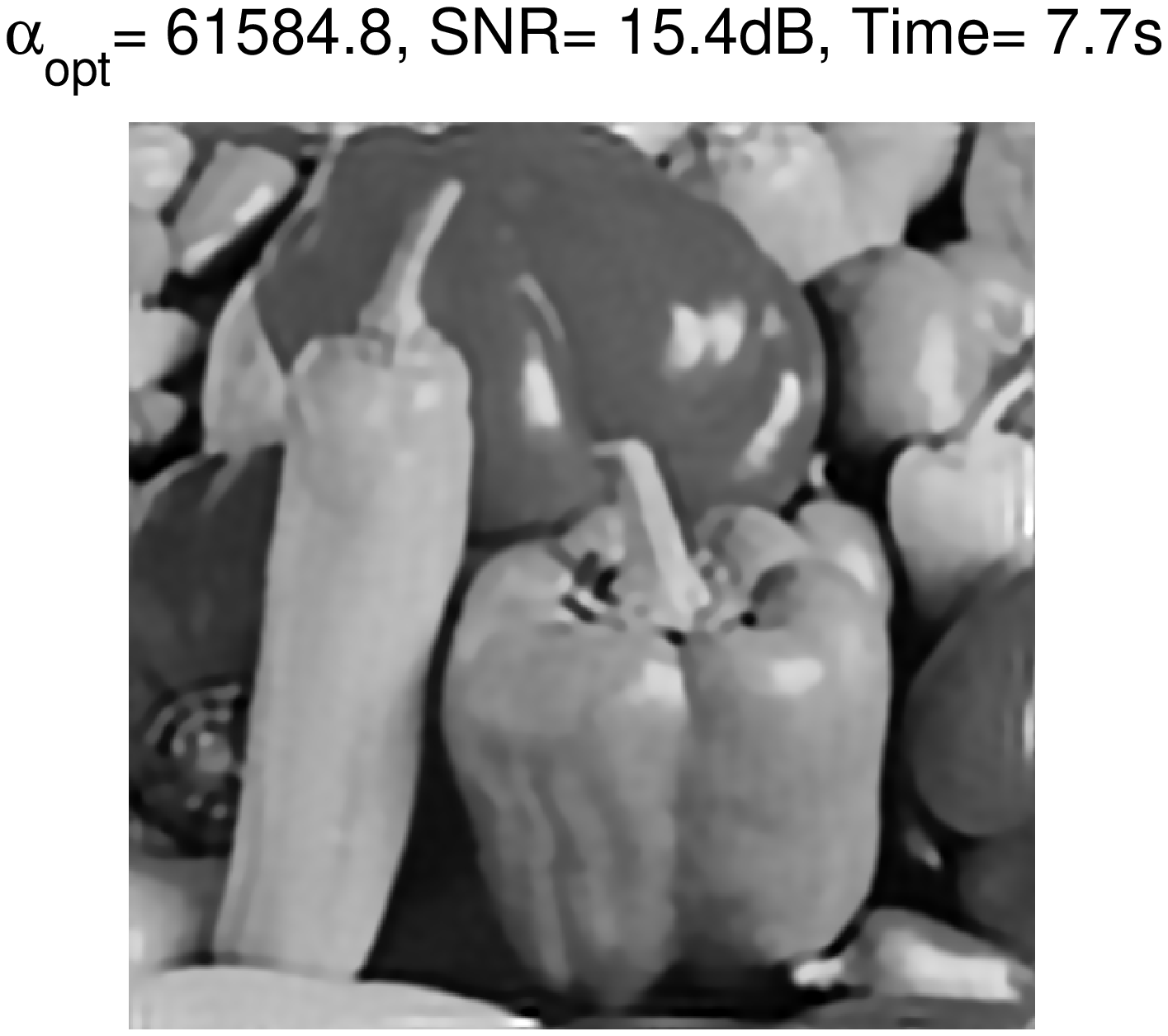,width=5.0cm}
        \small{Anti-reflective}
    \end{minipage} \\ \vspace{0.2cm}
    \begin{minipage}[c]{5.0cm}
        \centering
        \epsfig{figure=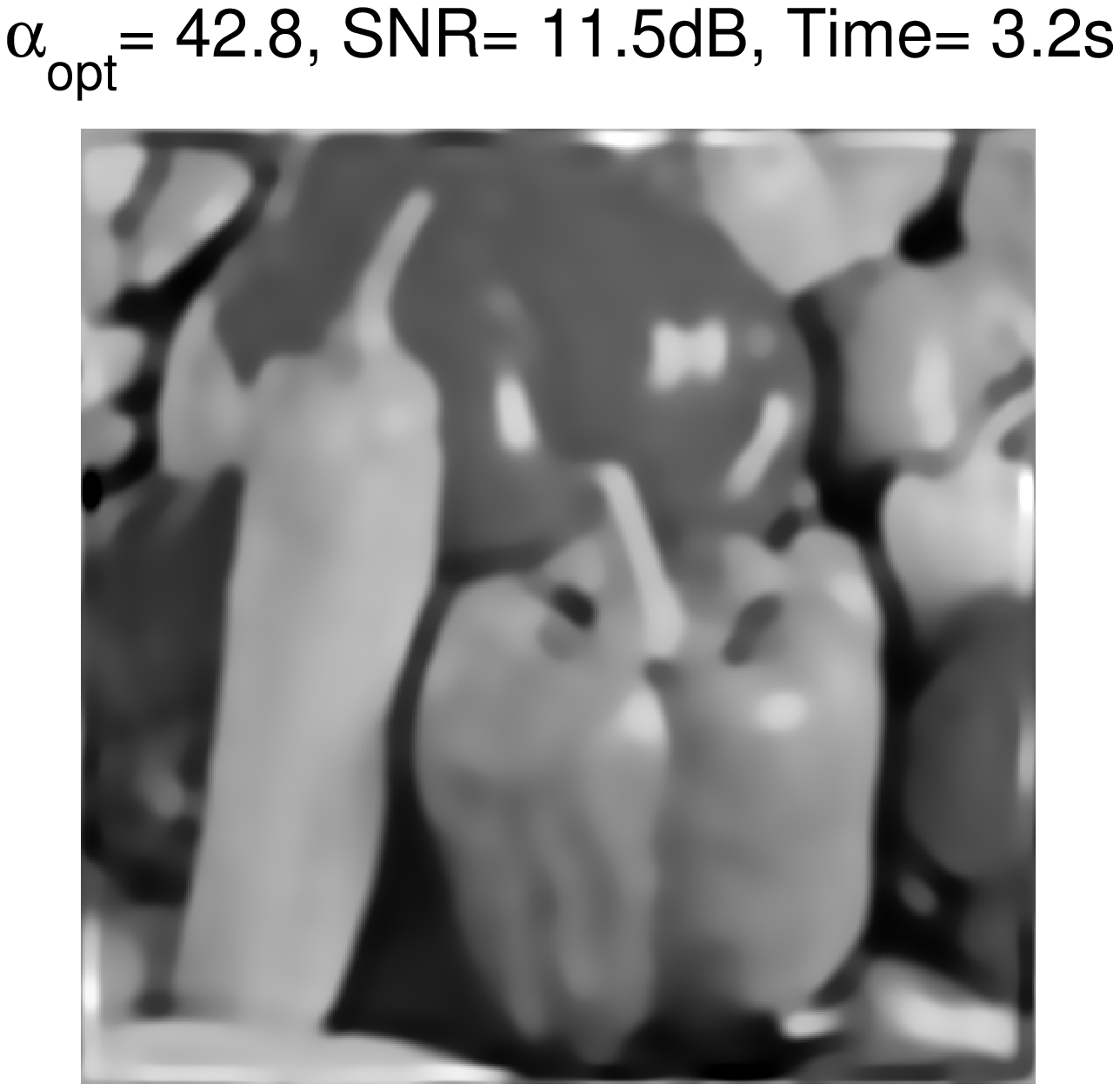,width=5.0cm}
        \small{Periodic}
    \end{minipage}
    \begin{minipage}[c]{5.0cm}
        \centering
        \epsfig{figure=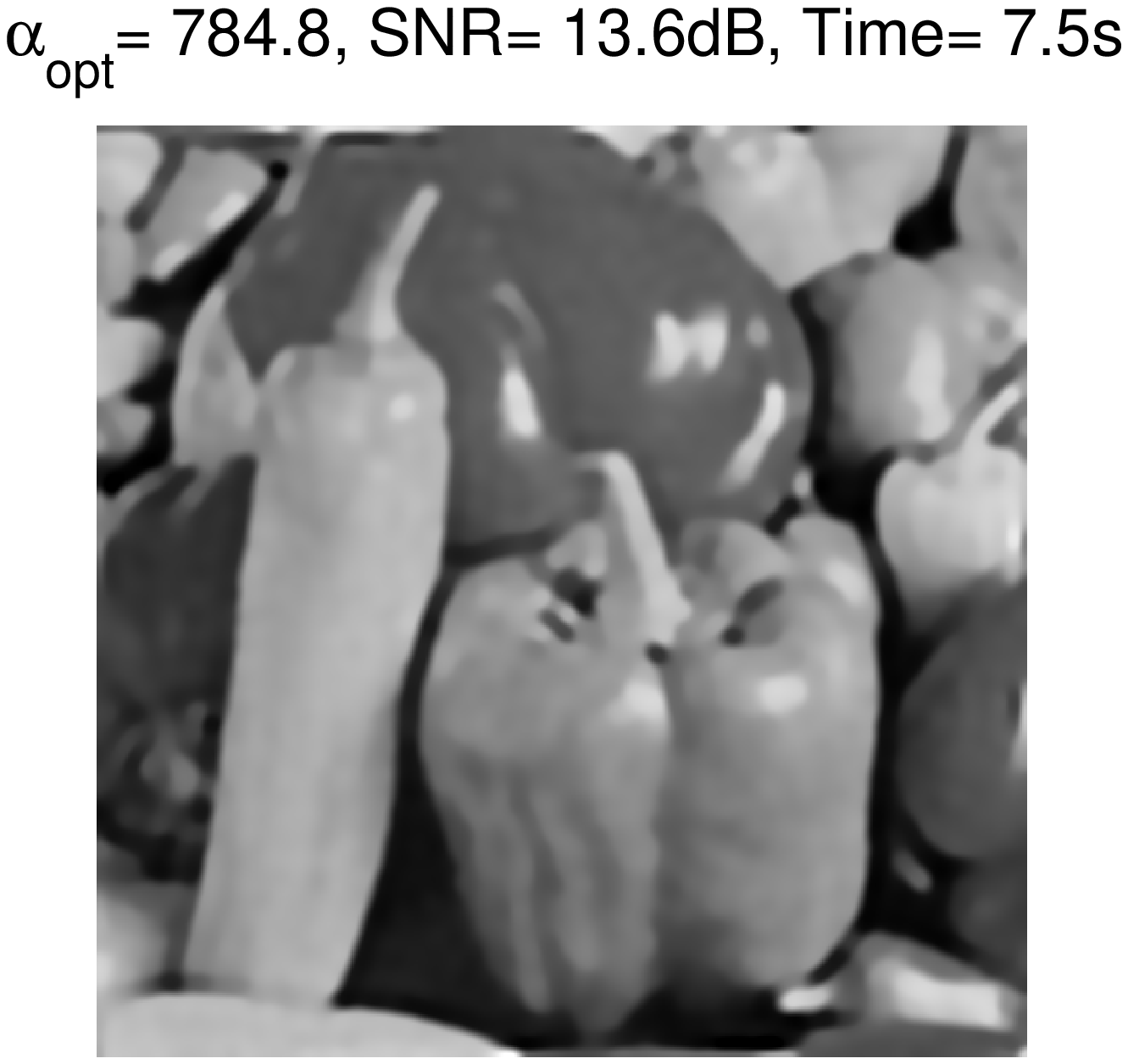,width=5.0cm}
        \small{Reflective}
   \end{minipage}
    \begin{minipage}[c]{5.0cm}
        \centering
        \epsfig{figure=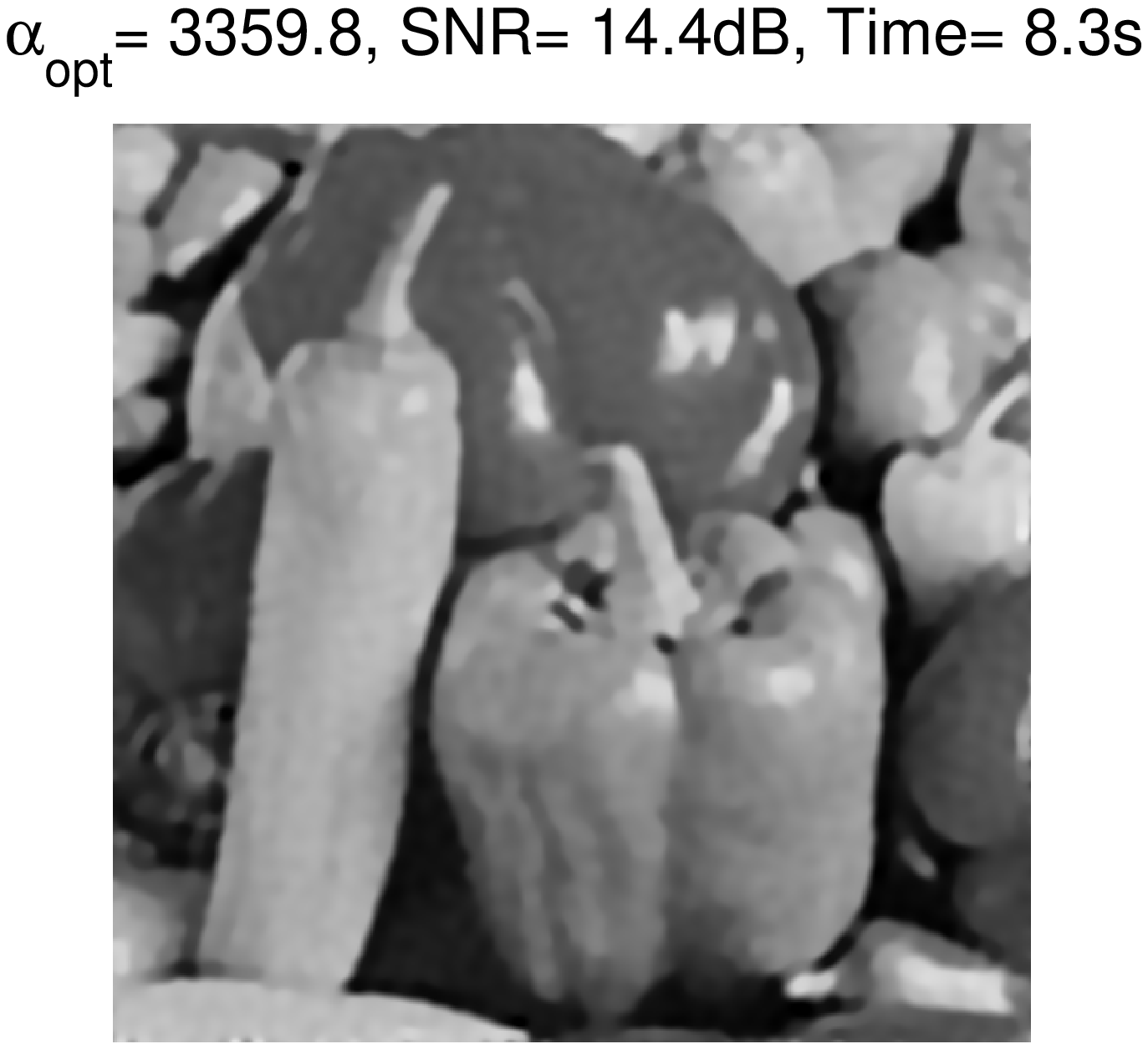,width=5.0cm}
        \small{Anti-reflective}
    \end{minipage}\\ \vspace{0.2cm}
    \begin{minipage}[c]{5.0cm}
        \centering
        \epsfig{figure=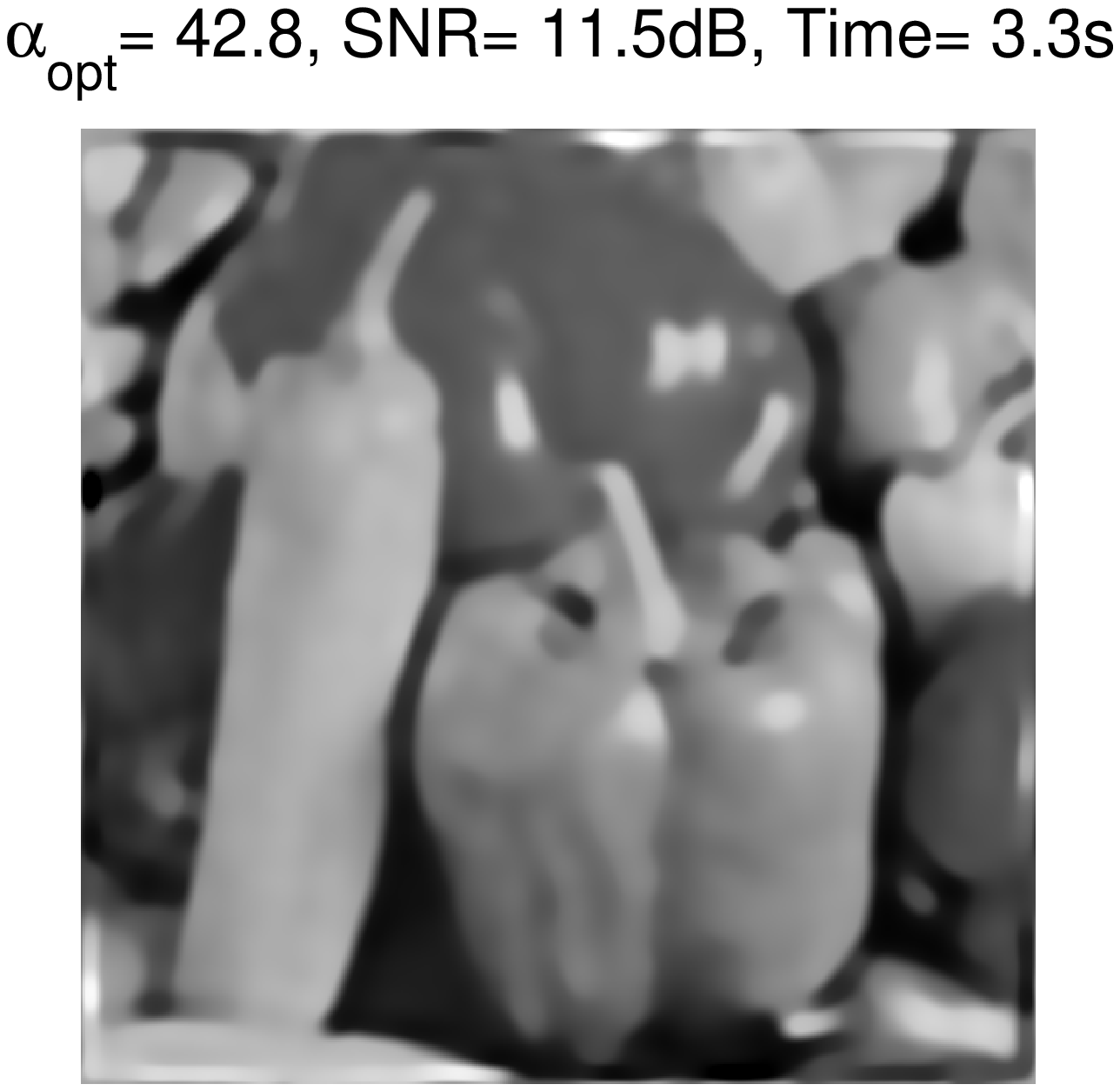,width=5.0cm}
        \small{Periodic}
    \end{minipage}
    \begin{minipage}[c]{5.0cm}
        \centering
        \epsfig{figure=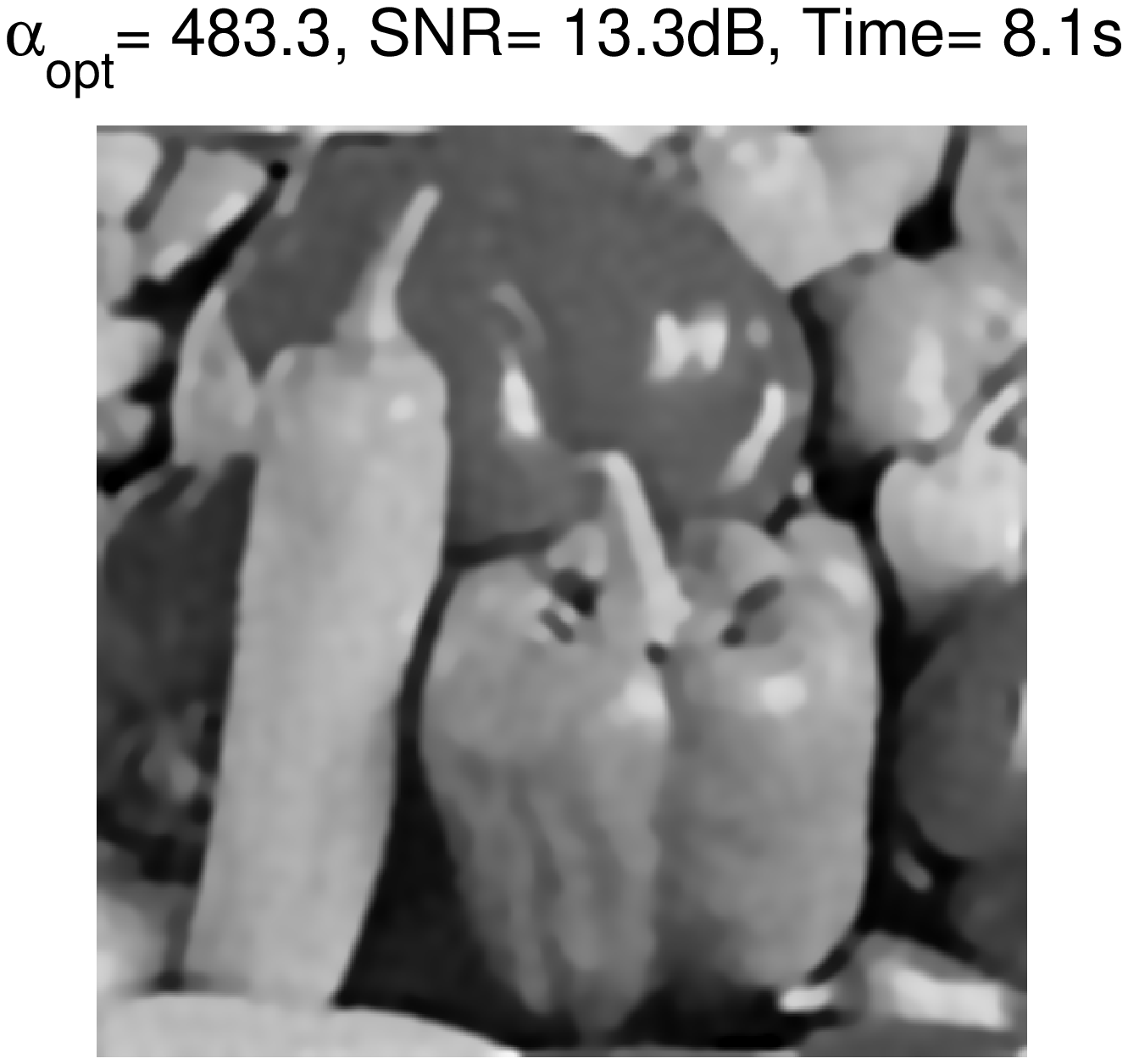,width=5.0cm}
        \small{Reflective}
   \end{minipage}
    \begin{minipage}[c]{5.0cm}
        \centering
        \epsfig{figure=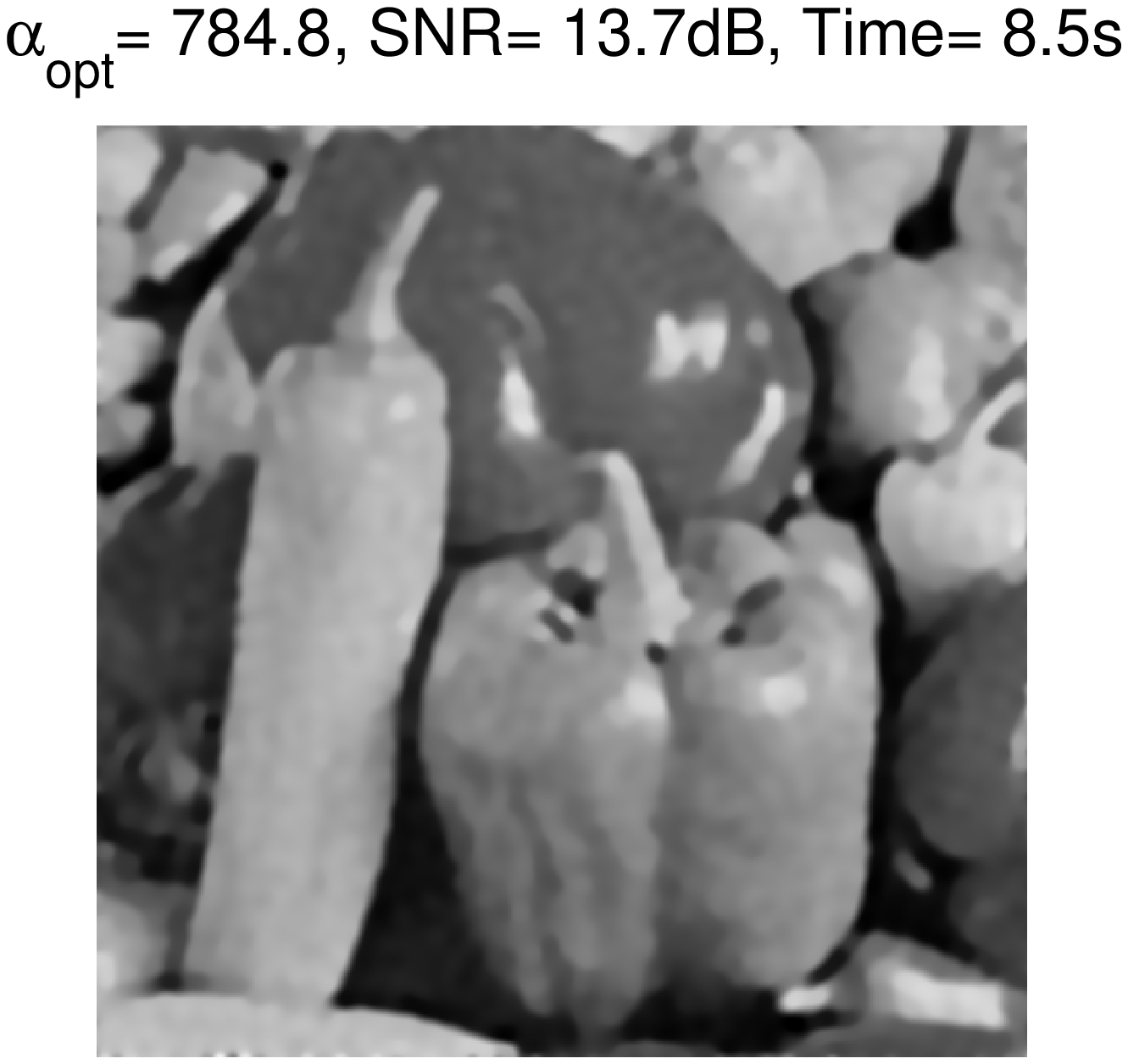,width=5.0cm}
        \small{Anti-reflective}
    \end{minipage}
    \end{center}
\end{center}
\caption{Best restorations for different BCs (PSF with ${\tt hsize}=16$ and $\delta=5$). The first row contains the results for $\sigma^2=10^{-6}$; the second row contains the results for $\sigma^2=10^{-4}$; the third row contains the results for $\sigma^2=5\times 10^{-4}$.} \label{fig:im-rs1}
\end{figure}

\begin{figure}[tb]
\begin{center}
    \begin{center}
    \begin{minipage}[c]{5.0cm}
        \centering
        \epsfig{figure=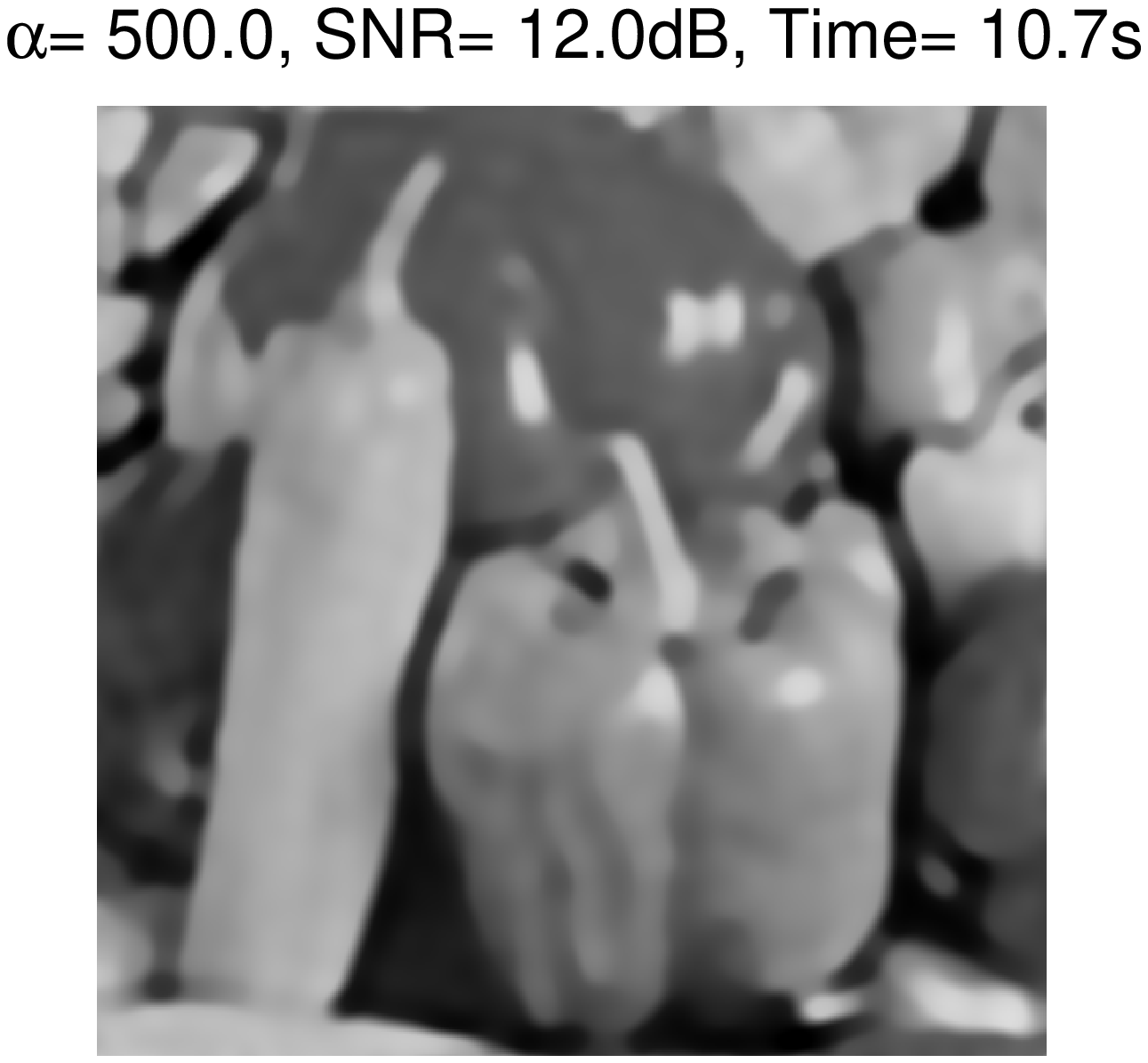,width=5.0cm}
        \small{Anti-reflective}
    \end{minipage}
    \begin{minipage}[c]{5.0cm}
        \centering
        \epsfig{figure=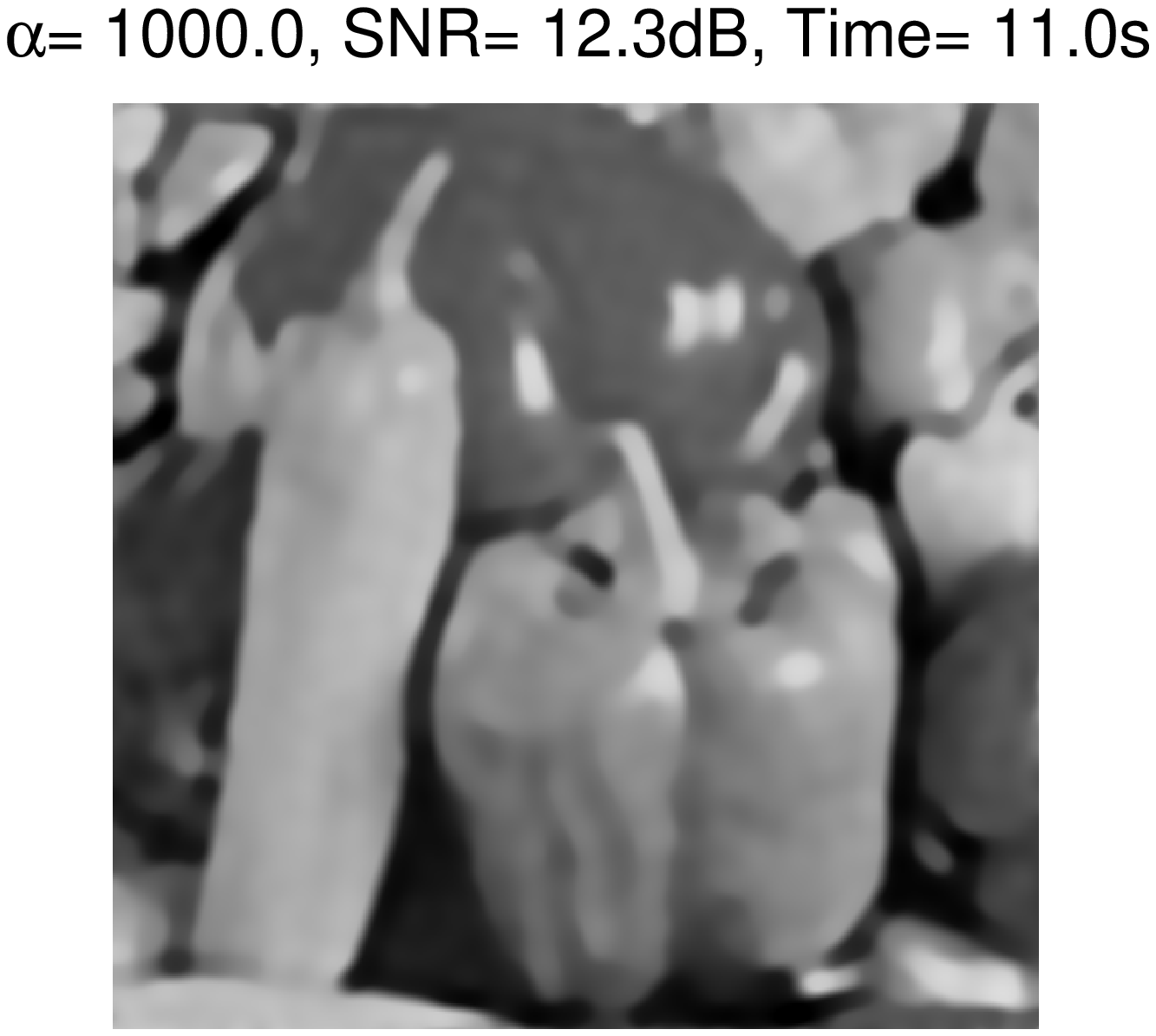,width=5.0cm}
        \small{Anti-reflective}
   \end{minipage}
    \begin{minipage}[c]{5.0cm}
        \centering
        \epsfig{figure=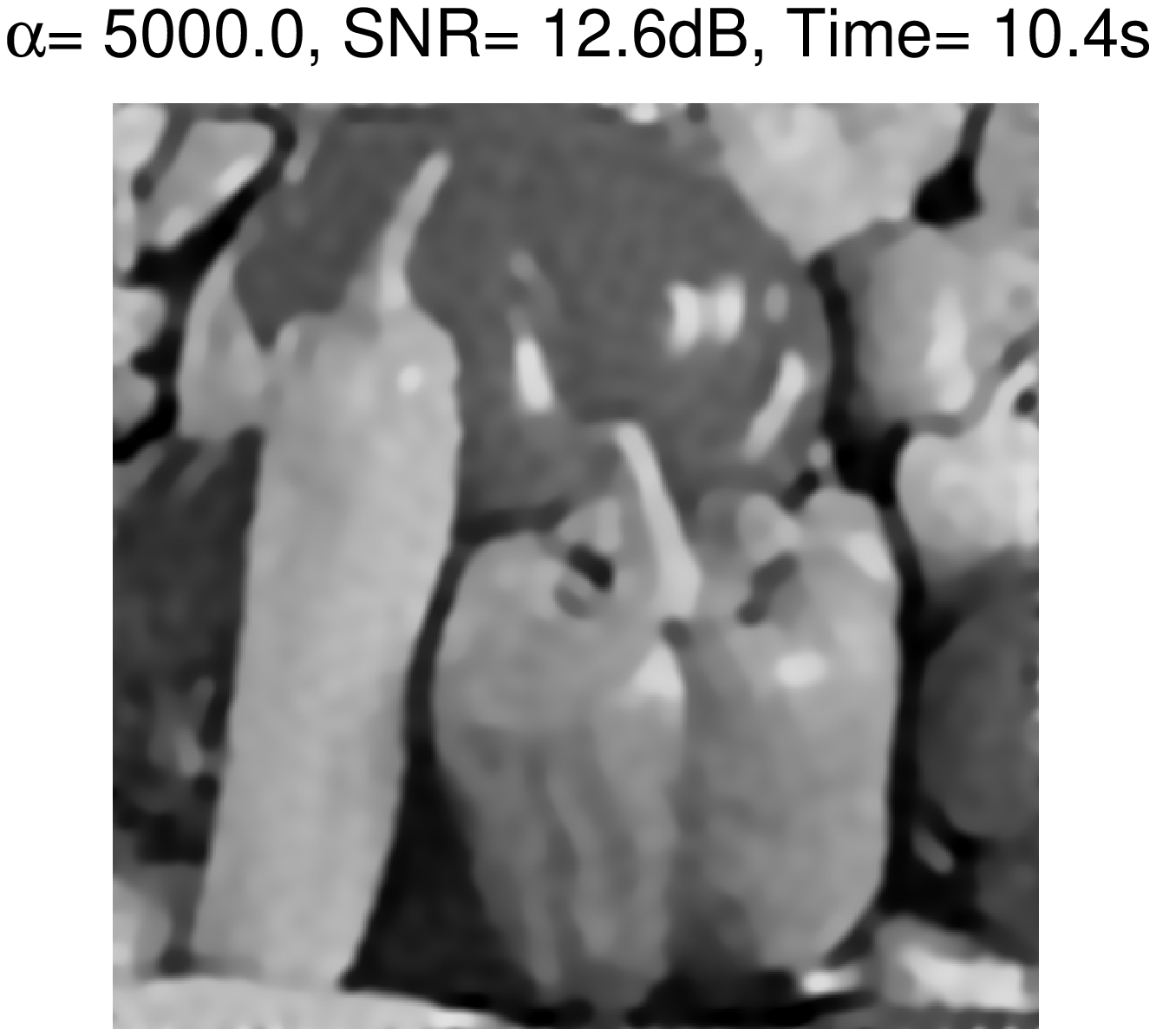,width=5.0cm}
        \small{Anti-reflective}
    \end{minipage} \\ \vspace{0.2cm}
    \begin{minipage}[c]{5.0cm}
        \centering
        \epsfig{figure=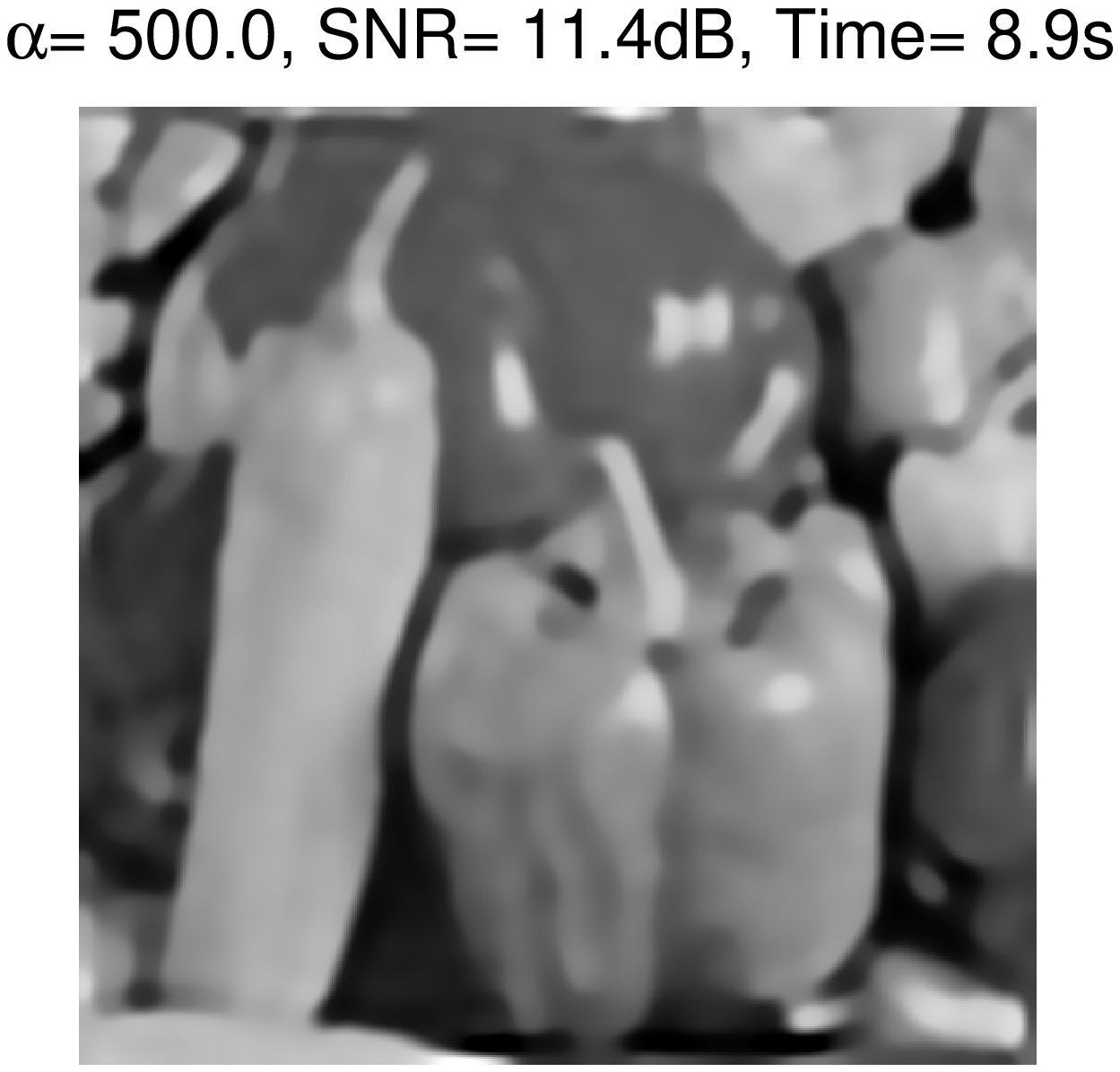,width=5.0cm}
        \small{Reflective}
    \end{minipage}
    \begin{minipage}[c]{5.0cm}
        \centering
        \epsfig{figure=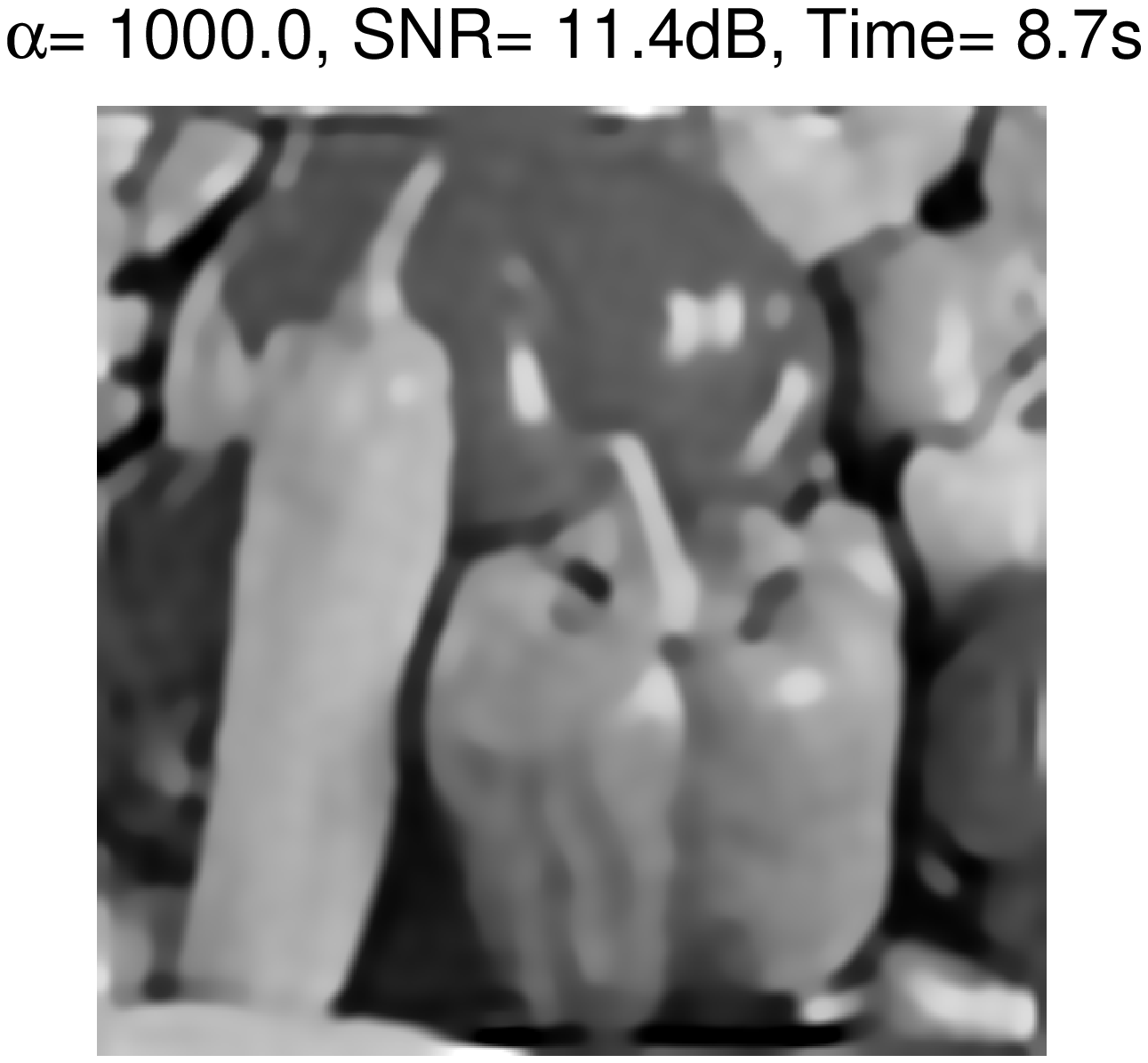,width=5.0cm}
        \small{Reflective}
   \end{minipage}
    \begin{minipage}[c]{5.0cm}
        \centering
        \epsfig{figure=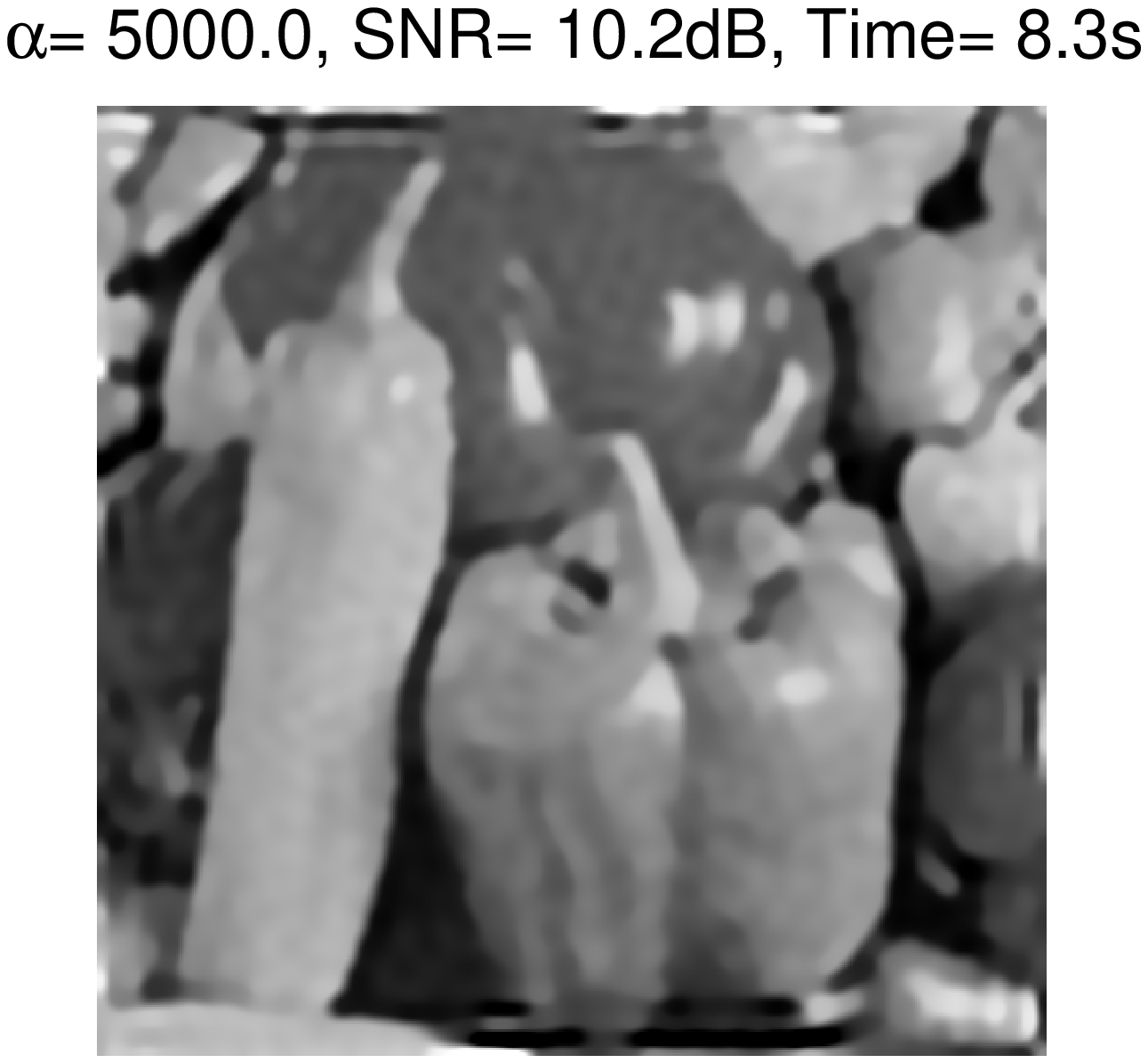,width=5.0cm}
        \small{Reflective}
    \end{minipage}
    \end{center}
\end{center}
\caption{Restored images for different BCs and different $\alpha$ (PSF with ${\tt hsize}=22$ and $\delta=7$ and $\sigma^2=10^{-4}$).} \label{fig:im-rs4}
\end{figure}

\subsection{Nonsymmetric PSF}
In this example we consider a nonsymmetric blur and a rectangular image. As already observed, the linear system \eqref{subp2-op-d} can not directly solved by fast trigonometric transform in the case of reflective and antireflective BCs, while periodic BCs do not provide accurate restorations as confirmed by the previous numerical example.
Consequently, as previously suggested, in this example we apply the second discretization strategy based on the enlargement of the domain.

Figure \ref{fig:im-tb2} shows the true image, the nonsymmetric PSF, and the blurred image without noise.
Figure \ref{fig:snr-c2} shows the SNR versus the regularization parameter $\alpha$ for different BCs and different noise levels. We note that the antireflective extension does not produce a large improvement in the SNR like in the previous example with the discrete algorithm based on the BCs model. However, around the maximum SNR, the antireflective extension provides a slightly better restoration with a lower CPU time with respect to the reflective extension, cf. Figure \ref{fig:im-rs5}.

\begin{figure}[tb]
\begin{center}
    \begin{center}
    \begin{minipage}[c]{5.0cm}
        \centering
        \epsfig{figure=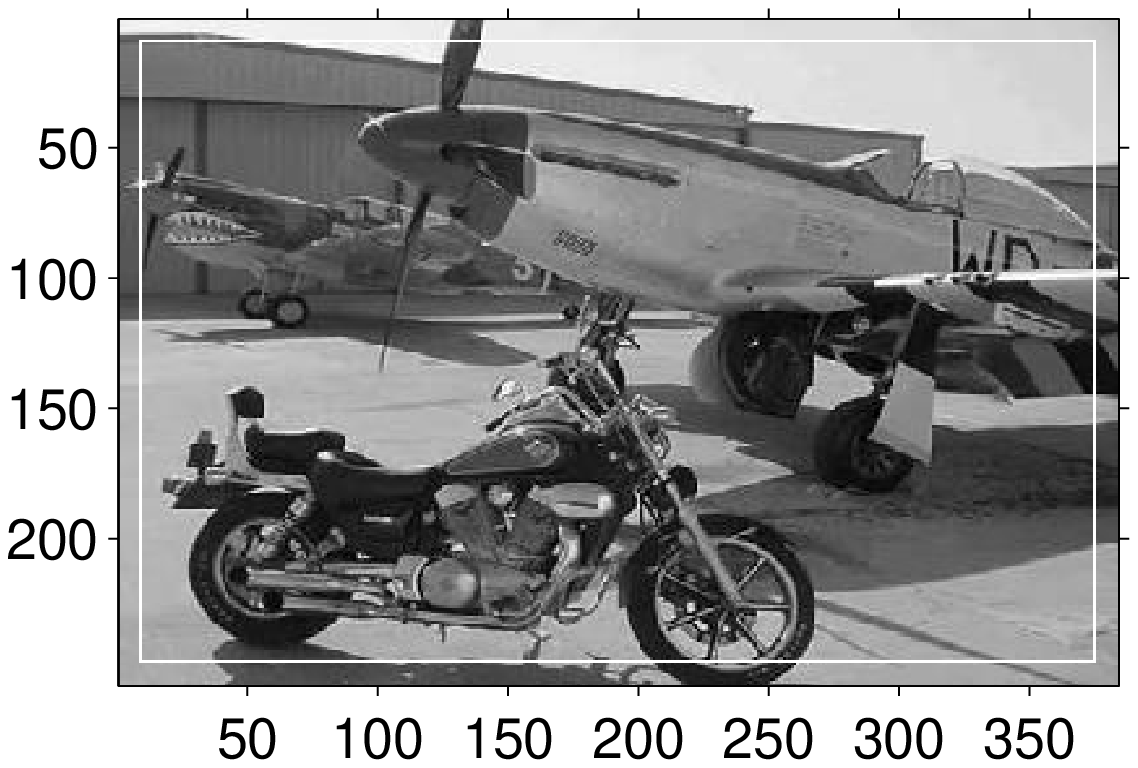,width=5.0cm}
        \small{True image}
    \end{minipage}
    \begin{minipage}[c]{5.0cm}
        \centering
        \epsfig{figure=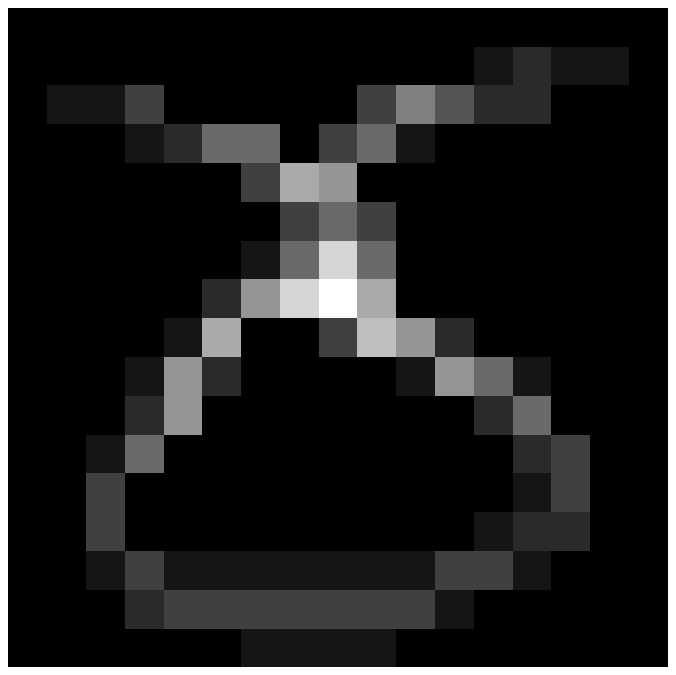,width=5.0cm}
        \small{PSF}
   \end{minipage}
    \begin{minipage}[c]{5.0cm}
        \centering
        \epsfig{figure=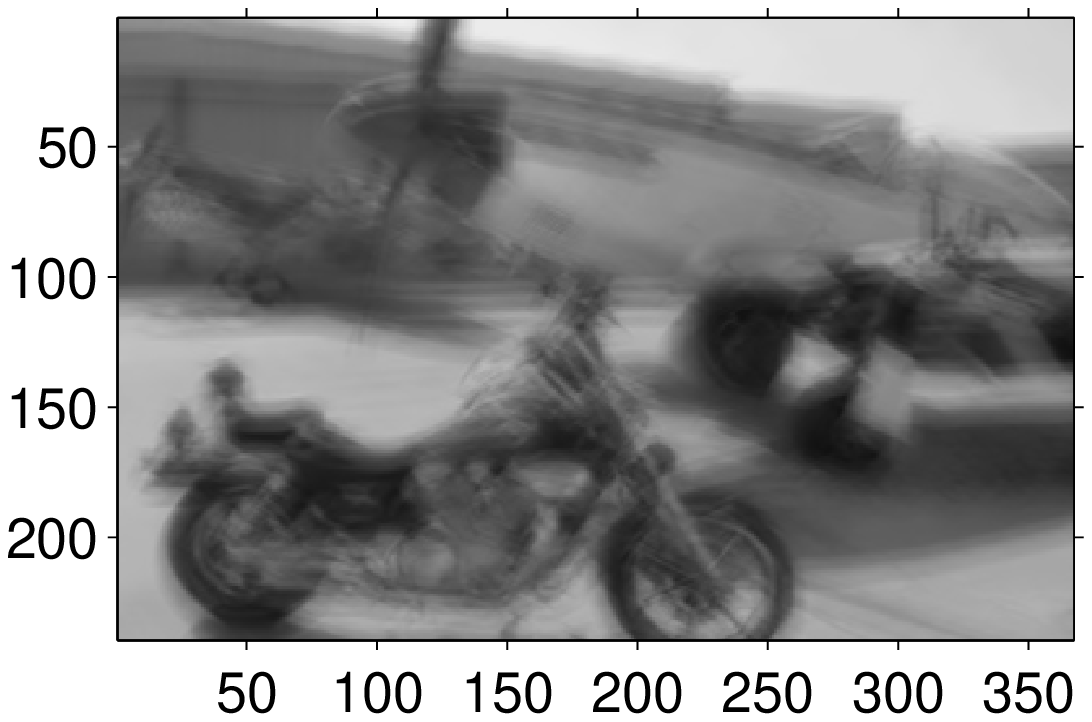,width=5.0cm}
        \small{Blurred image}
    \end{minipage}
    \end{center}
\end{center}
\caption{True image, PSF, and blurred image without noise.} \label{fig:im-tb2}
\end{figure}

\begin{figure}[tb]
\begin{center}
    \begin{center}
    \begin{minipage}[c]{5.0cm}
        \centering
        \epsfig{figure=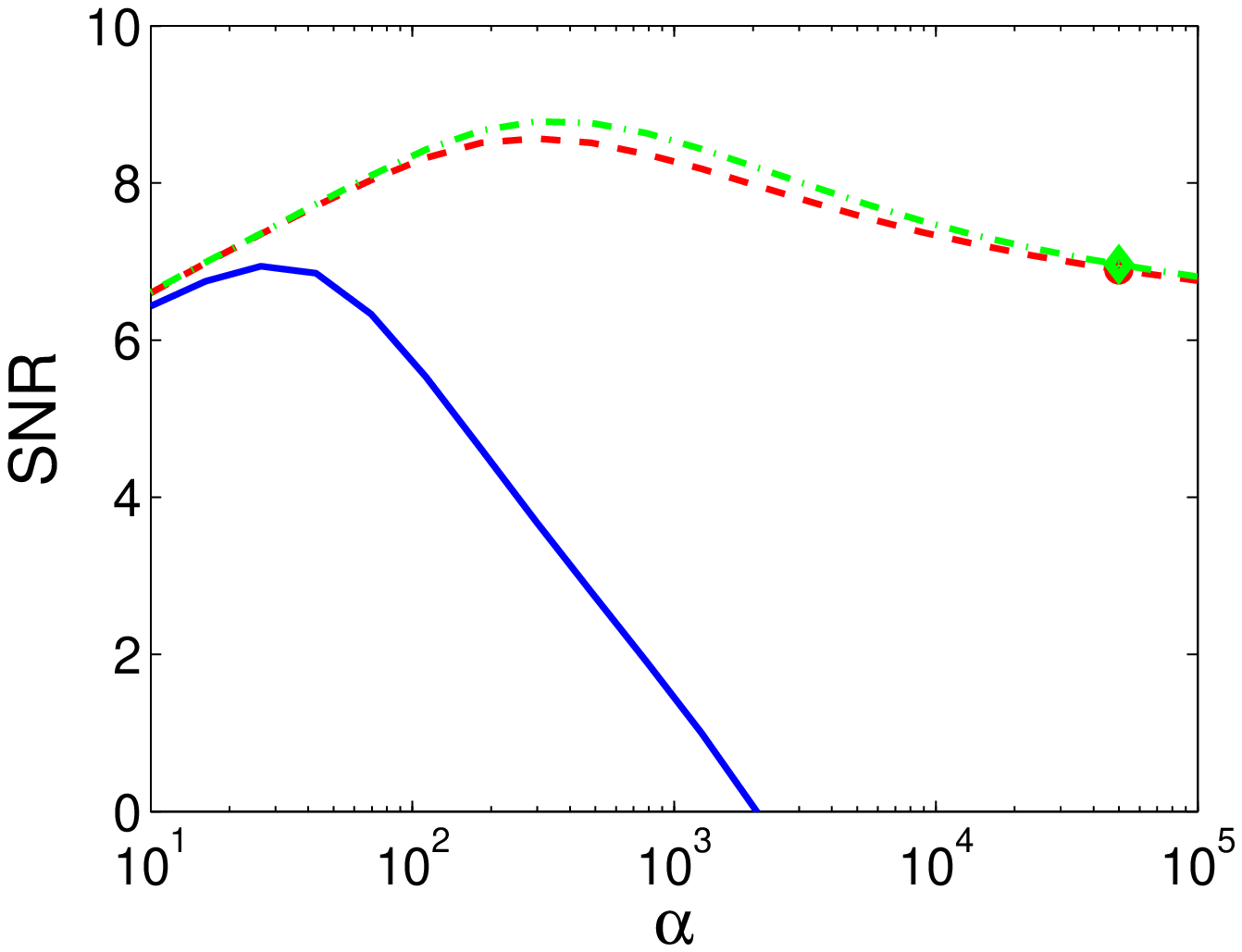,width=5.0cm}
        \small{$\sigma^2=10^{-6}$}
    \end{minipage}
    \begin{minipage}[c]{5.0cm}
        \centering
        \epsfig{figure=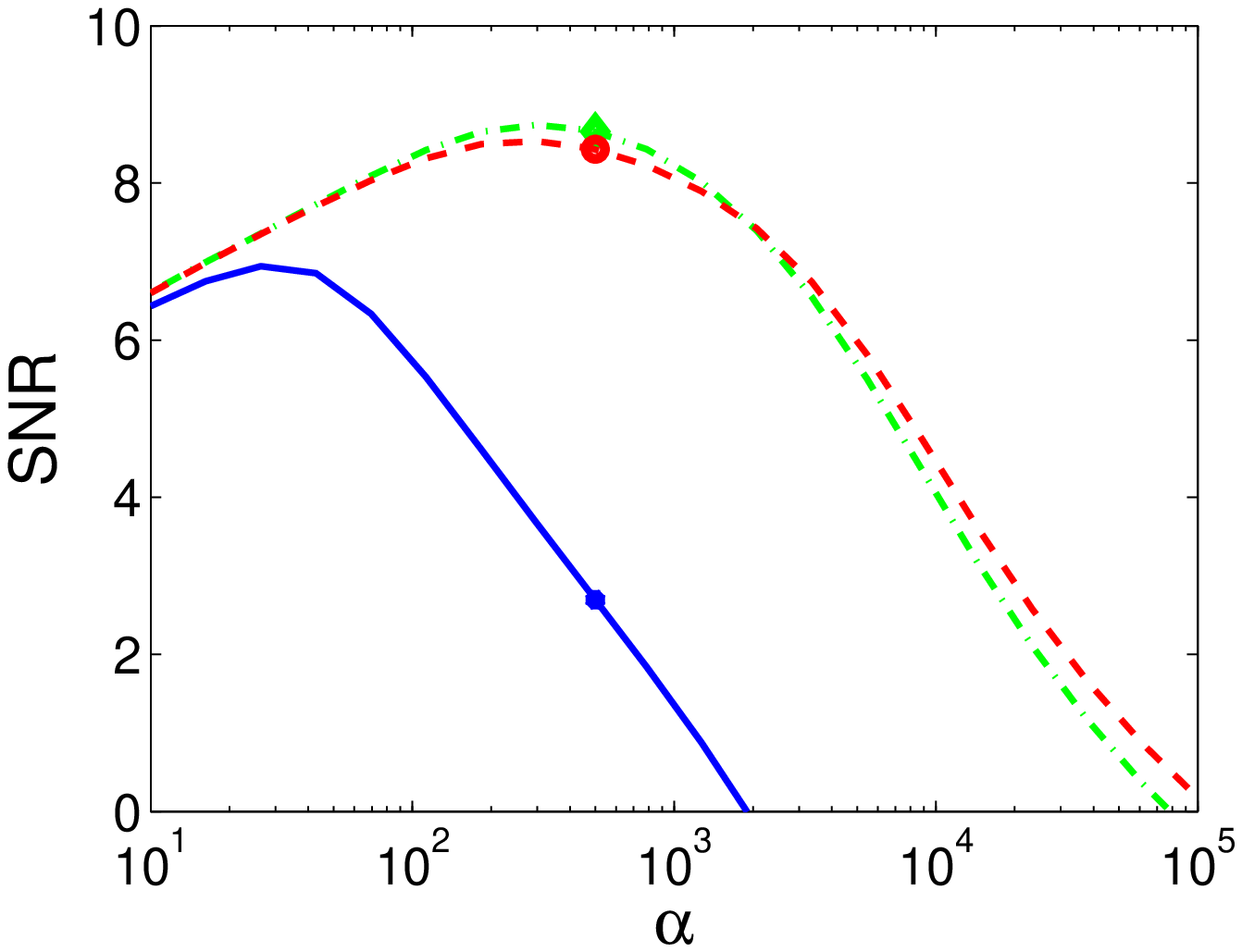,width=5.0cm}
        \small{$\sigma^2=10^{-4}$}
   \end{minipage}
    \begin{minipage}[c]{5.0cm}
        \centering
        \epsfig{figure=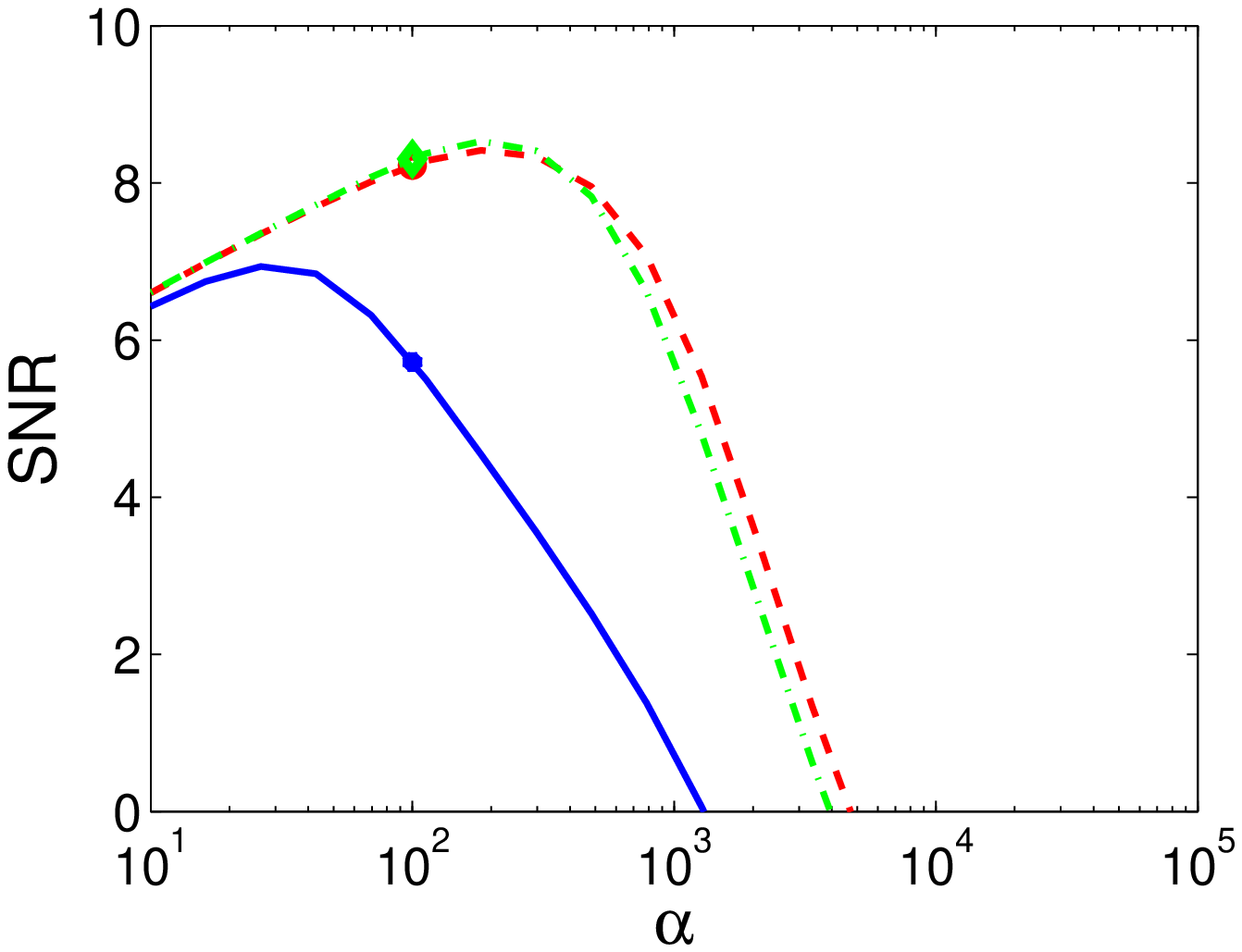,width=5.0cm}
        \small{$\sigma^2=5\times 10^{-4}$}
    \end{minipage}
    \end{center}
\end{center}
\caption{The SNR versus the regularization parameter $\alpha$ for different values of the noise (the solid, dashed, and  dashdot lines denote Periodic, Symmetric Extension, and Anti-symmetric Extension, respectively; the star, circle, and diamond are related to $\alpha=0.05/\sigma^2$).} \label{fig:snr-c2}
\end{figure}

\begin{figure}[tb]
\begin{center}
    \begin{center}
    \begin{minipage}[c]{5.0cm}
        \centering
        \epsfig{figure=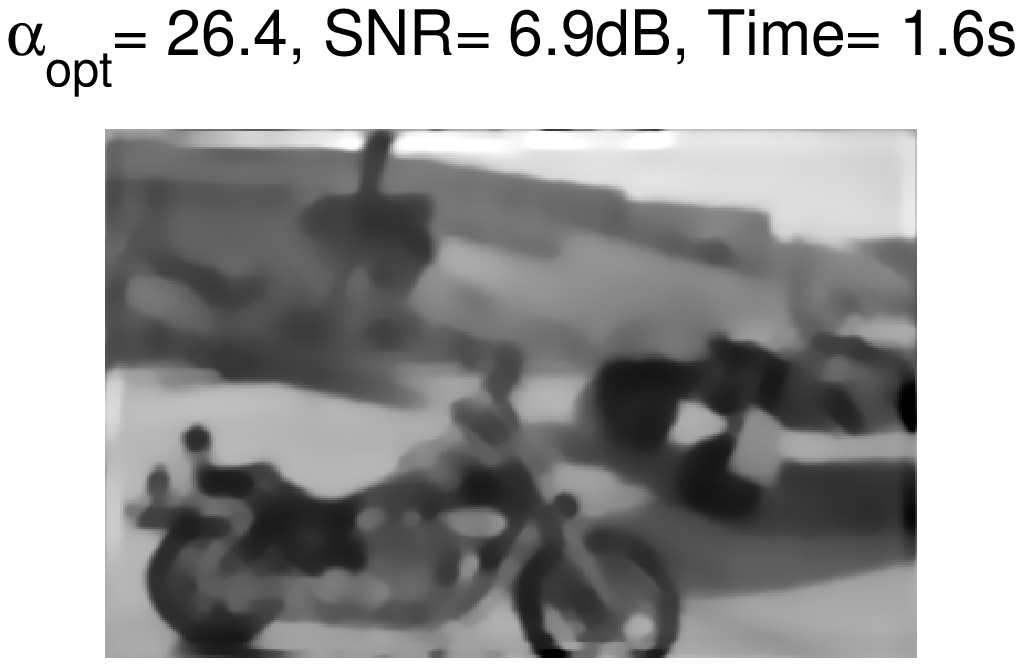,width=5.0cm}
        \small{Periodic}
    \end{minipage}
    \begin{minipage}[c]{5.0cm}
        \centering
        \epsfig{figure=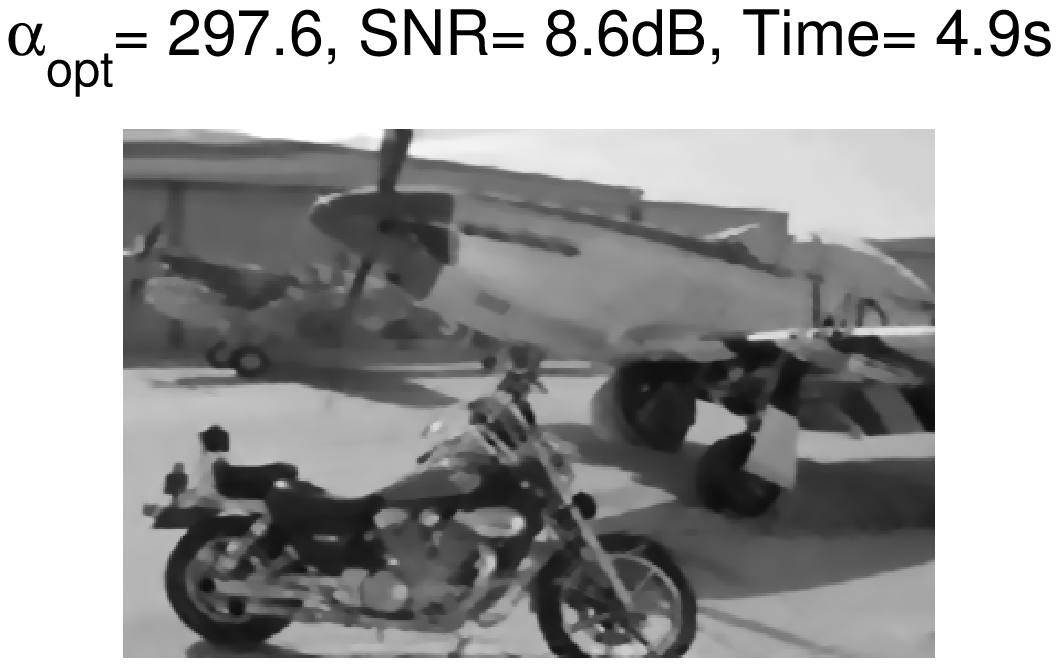,width=5.0cm}
        \small{Symmetric Extension}
   \end{minipage}
    \begin{minipage}[c]{5.0cm}
        \centering
        \epsfig{figure=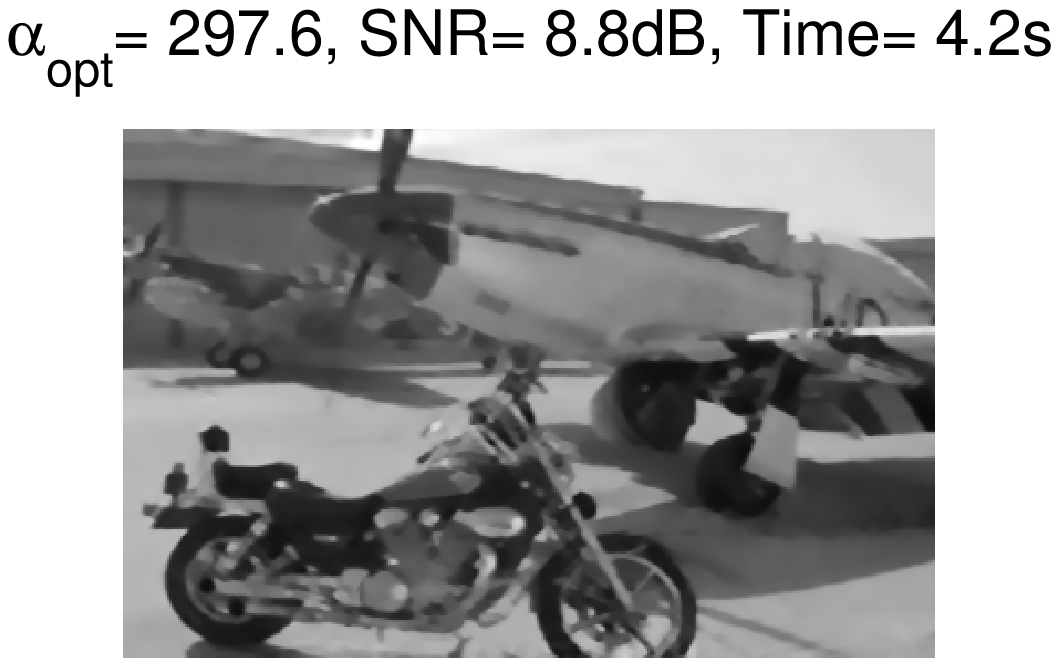,width=5.0cm}
        \small{Anti-symmetric Extension}
    \end{minipage} \\ \vspace{0.2cm}
    \begin{minipage}[c]{5.0cm}
        \centering
        \epsfig{figure=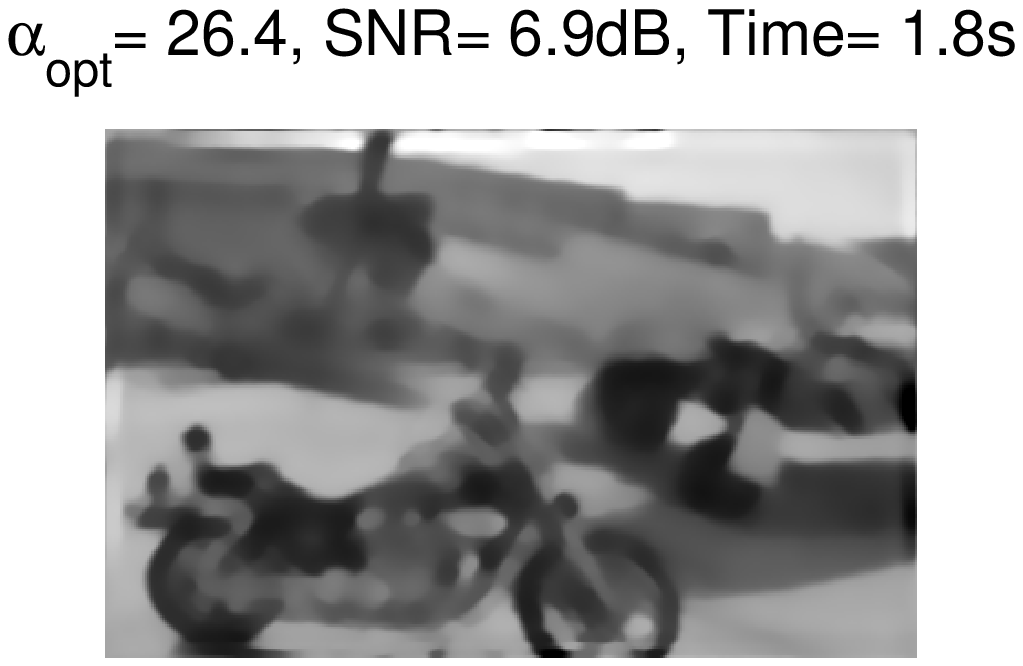,width=5.0cm}
        \small{Periodic}
    \end{minipage}
    \begin{minipage}[c]{5.0cm}
        \centering
        \epsfig{figure=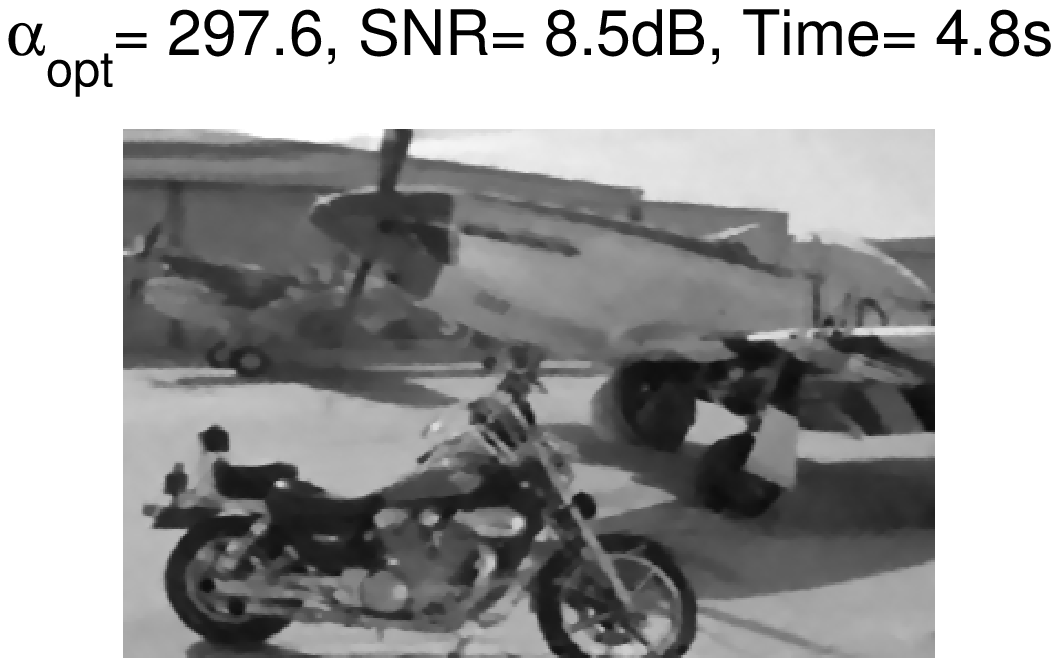,width=5.0cm}
        \small{Symmetric Extension}
   \end{minipage}
    \begin{minipage}[c]{5.0cm}
        \centering
        \epsfig{figure=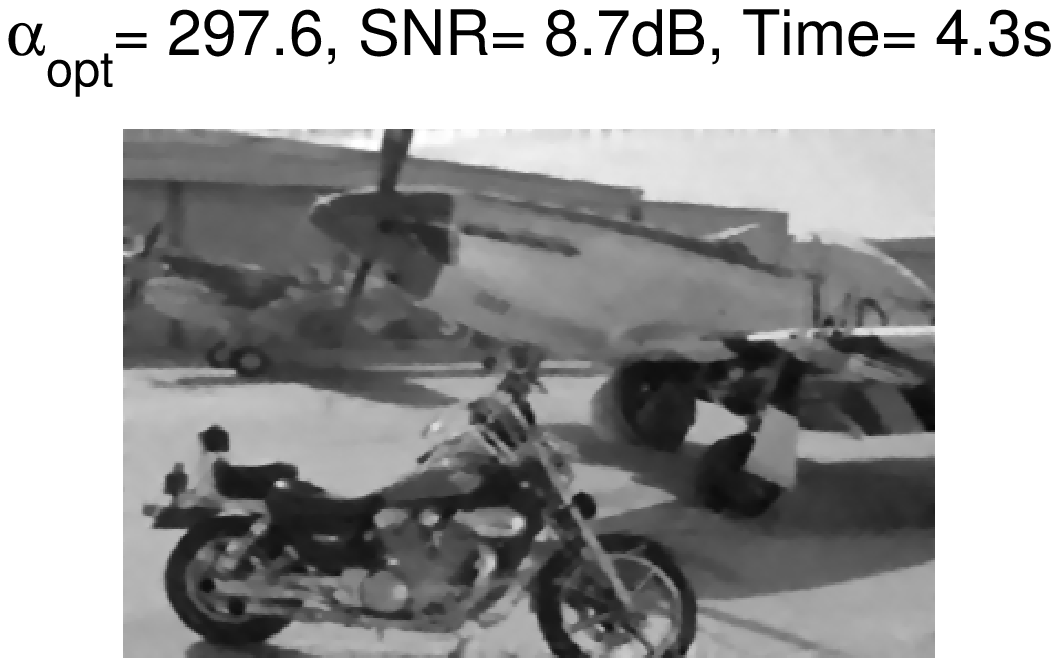,width=5.0cm}
        \small{Anti-symmetric Extension}
    \end{minipage}\\ \vspace{0.2cm}
    \begin{minipage}[c]{5.0cm}
        \centering
        \epsfig{figure=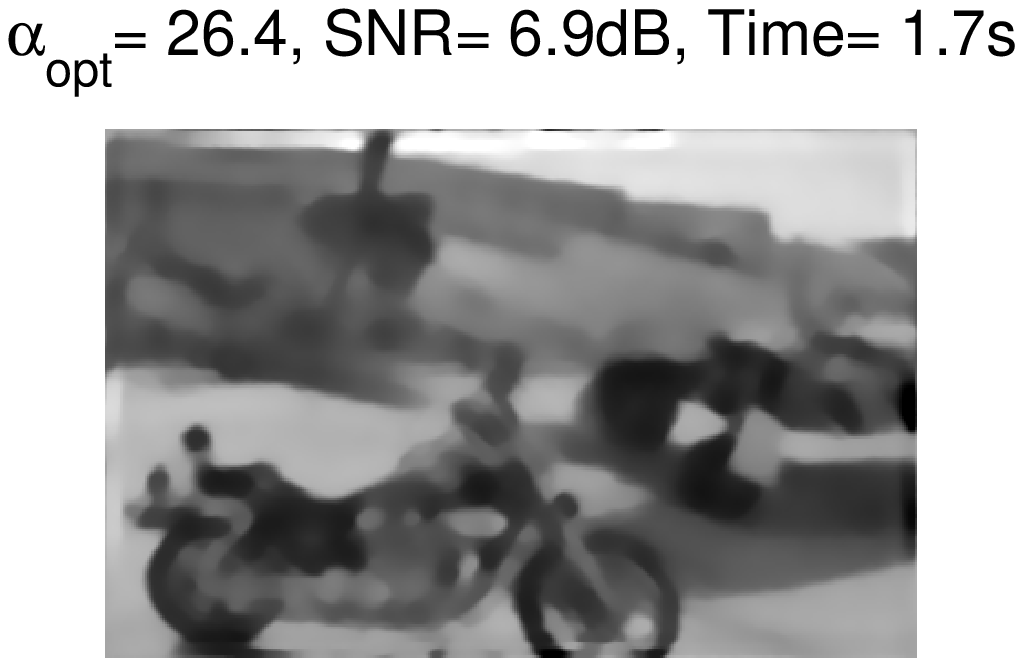,width=5.0cm}
        \small{Periodic}
    \end{minipage}
    \begin{minipage}[c]{5.0cm}
        \centering
        \epsfig{figure=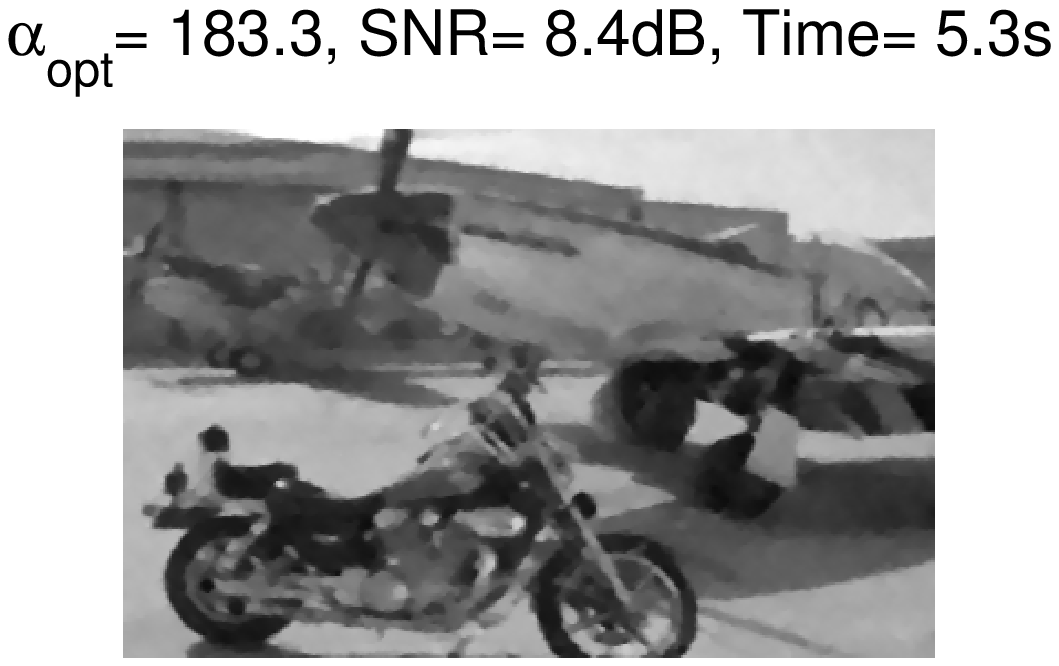,width=5.0cm}
        \small{Symmetric Extension}
   \end{minipage}
    \begin{minipage}[c]{5.0cm}
        \centering
        \epsfig{figure=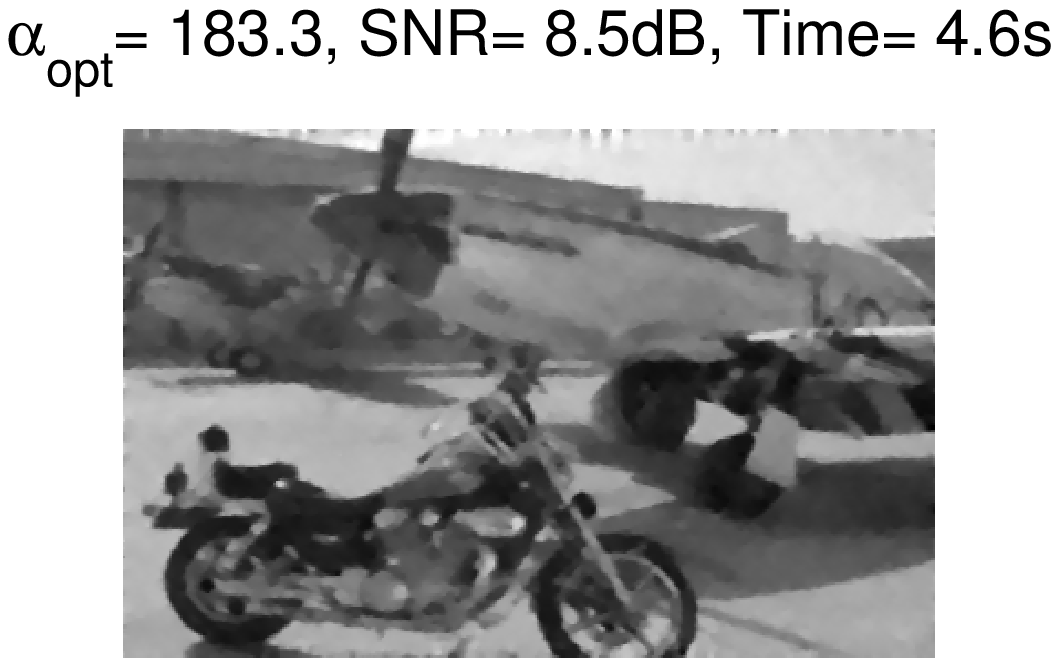,width=5.0cm}
        \small{Anti-symmetric extension}
    \end{minipage}
    \end{center}
\end{center}
\caption{Best restorations for different BCs. The first row contains the results for $\sigma^2=10^{-6}$; the second row contains the results for $\sigma^2=10^{-4}$; the third row contains the results for $\sigma^2=5\times 10^{-4}$.} \label{fig:im-rs5}
\end{figure}

\section{Conclusions}\label{sec5}
In this paper, inspired by \cite{WYYZ08}, we have proposed an alternating minimizzation algorithm for the continuous TV problem \eqref{tv} and we have proved its convergence to the minimum. The continuous formulation allows to solve the minimizzation problem without dealing with the details of the chosen
discretization
strategy. Indeed, we propose two different discrete algorithms
that provide accurate restorations with a low CPU time thanks to the use of fast trigonometric transforms as main computational tool.

In the future, it should be investigated the possibility to discretize our Algorithm \ref{alg} by the strategy 3) illustrated in the Introduction, solving the arising linear system by conjugate gradient with a proper preconditioner. A comparison both in terms of restoration quality and CPU time could be of interest. Furthermore, our approach based on antireflective BCs or antireflective extension could be compared with the algorithms proposed in \cite{AF13,MRF13}. A preliminary comparison is
 given in \cite{AF13} with the reflective extension strategy for their algorithm, but in Section \ref{sec4} we have observed that, at least in the case of symmetric PSF and a low level of noise, the antireflective BCs approach could be a good competitor. This is confirmed also by the numerical results in \cite{VBDW05}, where the antireflective BCs and the strategy 3) are compared using the Landweber method.



\begin{thebibliography}{99}
%
\bibitem{AF} R.A. Adams and J.J.F. Fournier,
	{\em Sobolev spaces}, Pure and Applied Mathematics, 140, Elsevier, 2003.
%
\bibitem{AF13} M.~S.~C. Almeida, M.~A.~T. Figueiredo,
	{\em Deconvolving Images With Unknown Boundaries Using the Alternating Direction Method of Multipliers},
	IEEE Trans. Image Process., 22 (2013), pp. 3074--3086.
%
 \bibitem{ADNS11} A. Aric\`o, M. Donatelli, J. Nagy, and S. Serra-Capizzano,
        {\em The Anti-Reflective Transform and Regularization by Filtering},
        Numerical Linear Algebra in Signals, Systems, and Control.,
        Lecture Notes in Electrical Engineering,
        edited by S. Bhattacharyya, R. Chan, V. Olshevsky, A. Routray, and P. Van Dooren,
        Springer Verlag, Vol. 80 (2011) pp. 1--21.
 %
\bibitem{ADS08} A. Aric\`o, M. Donatelli, and S. Serra-Capizzano,
    {\em Spectral analysis of the anti-reflective algebra}, Linear Algebra Appl.,
    428 (2008), pp. 657--675.
 %
 \bibitem{BB05} M. Bertero and P. Boccacci,
         {\em A simple method for the reduction of boundary effects in the Ricardson-Lucy approach to image deconvolution},
	 Astron. Astrophys., 437 (2005), pp. 369-374.
%
 \bibitem{BDS11} Z.~J. Bai, M. Donatelli, and S. Serra-Capizzano,
        {\em Fast Preconditioners for Total Variation Deblurring with Anti-Reflective Boundary Conditions},
        SIAM J. Matrix Anal. Appl., 32--3 (2011), pp. 785--805.
\bibitem{CCW99} R. Chan, T. Chan, and C. Wong,
	{\em Cosine transform based preconditioners for total variation deblurring},
	IEEE Trans. Image Proc., 8 (1999), pp. 1472--1478.
%
\bibitem{CS05} T. Chan, and J. Shen,
    \emph{Image Processing and Analysis: Variational, PDE, Wavelet, and Stochastic Methods},
    SIAM, 2005.
%
\bibitem{CT84} I. Csisz\'{a}r and G. Tusn\'{a}dy,
	{\em Information geometry and alternating minimization procedures},
	Statistics and Decisions Supp., 1 (1984), pp. 205--237.
%
\bibitem{D10} M. Donatelli,
        {\em Fast transforms for high order boundary conditions in deconvolution problems},
        BIT,  50--3 (2010), pp. 559--576.
%
 \bibitem{DS10} M. Donatelli and S. Serra-Capizzano,
        {\em Antireflective Boundary Conditions for Deblurring Problems},
        Journal of Electrical and Computer Engineering,
        Vol. 2010 (2010), Article ID 241467, 18 pages (survey).
%
\bibitem{DS10b} M. Donatelli and S. Serra-Capizzano,
        {\em On the Treatment of Boundary Artifacts in Image Restoration by reflection and/or anti-reflection},
        Matrix methods: theory, algorithms and applications,
        edited by V. Olshevsky and E. Tyrtyshnikov, World Scientific, 2010.
%
\bibitem{DEMS06} M. Donatelli, C. Estatico, A. Martinelli, and S. Serra-Capizzano,
    \emph{Improved image deblurring with anti-reflective boundary conditions and re-blurring},
    Inverse Problems, 22 (2006) pp. 2035--2053.
%
\bibitem{DS05} M. Donatelli and S. Serra-Capizzano,
    \emph{Anti-reflective boundary conditions and re-blurring},
    Inverse Problems, 21--1 (2005) pp. 169--182.
%
\bibitem{ET99} I. Ekeland and R. Temam,
	{\em Convex analysis and variational problems},
	Classics in Applied Mathematics, Society for Industrial Mathematics, 1999.
%

\bibitem{LCE07} L.C. Evans, \textit{The $1$-Laplacian, the $\infty$-Laplacian and differential games}, Contemp. Math., 446, Amer. Math. Soc., 2007.
%

\bibitem{FN11} Y.~W. Fan and J.~G. Nagy,
	{\em Synthetic boundary conditions for image deblurring},
	Linear Algebra Appl., 434 (2011), pp. 2244--2268.
%
\bibitem{G93} C. Groetsch, {\em Inverse Problems in the Mathematical Sciences}, Wiesbaden,
    Germany, Vieweg, 1993.
%
\bibitem{HNO05} P. C. Hansen, J. Nagy, and D. P. O'Leary, {\em Deblurring Images Matrices, Spectra and Filtering},
    SIAM Publications, Philadelphia, 2005.
%
\bibitem{BK03} B. Kawohl and V. Fridman, \textit{Isoperimetric estimates for the first eigenvalue of the p -Laplace operator and the Cheeger constant}, Comment. Math. Univ. Carolin. 44 (2003), pp. 659--667.

%
\bibitem{MRF13}
	A. Matakos, S. Ramani, and J.~A.  Fessler,
	{\em Accelerated Edge-Preserving Image Restoration Without Boundary Artifacts},
	IEEE Trans. Image Process., 22 (2013) pp. 2019--2029.

\bibitem{NCT99} M. Ng, R. Chan, and W. C. Tang,
 	{\em A fast algorithm for deblurring models with Neumann boundary conditions},
	SIAM J. Sci. Comput., 21 (1999), pp. 851--866.
%
\bibitem{NW99} J. Nocedal and S. J. Wright,
	{\em Numerical Optimization}, Springer, New York, 1999.
%

%
%
\bibitem{R05} S.~J. Reeves,
	{\em Fast image restoration without boundary artifacts},
	IEEE Trans. Image Process., 14 (2005), pp. 1448--1453.
%
\bibitem{S03}  S. Serra-Capizzano,
	{\em A note on anti-reflective boundary conditions and fast deblurring models},
	SIAM J. Sci. Comput., 25-3 (2003), pp. 1307--1325.
 %
\bibitem{S12} M. Sorel,
	{\em Removing Boundary Artifacts for Real-Time Iterated Shrinkage Deconvolution},
	IEEE Transactions on Image Processing, 21 (2012), pp. 2329--2334.
%

\bibitem{RT83} R. Temam, \textit{Problemes math\'ematiques en plasticit\'e}, M\'ethodes Math\'ematiques de l'Informatique, 12 Gauthier-Villars, 1983.

%
\bibitem{VBDW05} R. Vio, J. Bardsley, M. Donatelli, and W. Wamsteker,
     \emph{Dealing with edge effects in least-squares image deconvolution problems},
     Astron. Astrophys., 442 (2005), pp. 397--403.
%
\bibitem{V02} C.~R. Vogel,
	{\em Computational Methods for Inverse Problems},
	SIAM 2002.
%
\bibitem{WYYZ08} Y. Wang, J. Yang, W. Yin, and Y. Zhang,
	{\em A new alternating minimization algorithm for total variation image reconstruction},
	SIAM J. Imaging Sci., 1 (2008), pp. 248--272.
\end{thebibliography}
\end{document}